\providecommand{\keywords}[1]
{
  \small	
  \textbf{\textit{Keywords---}} #1
}
\newcommand{\matt}[1]{\todo[inline,color=green!30]{MK: #1}}
\newcommand{\1}{\{*\}}
\newcommand{\Opt}{\mathsf{Opt}}
\newcommand{\Saddle}{\mathsf{Saddle}}
\newcommand{\DiffSaddle}{\mathsf{DiffSaddle}}
\newcommand{\DiffConv}{\mathsf{DiffConv}}
\newcommand{\Conv}{\mathsf{Conv}}
\newcommand{\Conc}{\mathsf{Conc}}
\newcommand{\DiffConc}{\mathsf{DiffConc}}
\newcommand{\Dynam}{\mathsf{Dynam}}
\newcommand{\NDD}{\mathsf{NDD}}
\newcommand{\FG}{\mathsf{FlowNet}}
\newcommand{\Set}{\mathrm{\textnormal{\textbf{Set}}}}
\newcommand{\Sets}{\mathrm{\textnormal{\textbf{Sets}}}}
\newcommand{\FinSet}{\mathrm{\textnormal{\textbf{FinSet}}}}
\newcommand{\Vect}{\mathrm{\textnormal{\textbf{Vect}}}}
\newcommand{\Cospan}{\mathrm{\textnormal{\textbf{Cospan}}}}
\newcommand{\UWD}{\mathrm{\textnormal{\textbf{UWD}}}}
\newcommand{\maps}{\colon}
\newcommand{\R}{\mathbb{R}}
\newcommand{\flow}{\mathsf{flow}}
\newcommand{\netflow}{\mathsf{netflow}}
\newcommand{\gad}{\mathsf{ga}\text{-}\mathsf{d}}
\newcommand{\gd}{\mathsf{gd}}
\newcommand{\Euler}{\mathsf{Euler}}
\newcommand{\pdsubg}{\mathsf{pd}\text{-}\mathsf{subg}}
\newcommand{\superg}{\mathsf{superg}}
\newcommand{\subg}{\mathsf{subg}}
\newcommand{\id}{\mathrm{id}}
\newcommand{\define}[1]{\textbf{#1}}
\newcommand{\CC}{\mathcal{C}}
\newcommand{\DD}{\mathcal{D}}
\newcommand{\OO}{\mathcal{O}}
\DeclareMathOperator*{\argmax}{arg\,max}
\DeclareMathOperator*{\argmin}{arg\,min}
\newtheorem{theorem}{Theorem}[section]
\newtheorem{corollary}{Corollary}[theorem]
\newtheorem{lemma}[theorem]{Lemma}
\theoremstyle{definition}
\newtheorem{definition}[theorem]{Definition}
\newtheorem{example}[theorem]{Example}
\newtheorem{remark}[theorem]{Remark}
\title{A Compositional Framework for First-Order Optimization}
\author{Tyler Hanks\and Matthew Klawonn\and Matthew Hale\and Evan Patterson\and James Fairbanks}
\date{ }
\begin{document}

\maketitle

\begin{abstract}

Optimization decomposition methods are a fundamental tool to develop distributed solution algorithms for large scale optimization problems arising in fields such as machine learning, optimal control, and operations research. In this paper, we present an algebraic framework for hierarchically composing optimization problems defined on hypergraphs and automatically generating distributed solution algorithms that respect the given hierarchical structure. 
The central abstractions of our framework are operads, operad algebras, and algebra morphisms, which formalize notions of syntax, semantics, and structure preserving semantic transformations respectively. These abstractions allow us to formally relate composite optimization problems to the distributed algorithms that solve them. Specifically, we show that certain classes of optimization problems form operad algebras, and a collection of first-order solution methods, namely gradient descent, Uzawa's algorithm  (also called gradient ascent-descent), and their subgradient variants, yield algebra morphisms from these problem algebras to algebras of dynamical systems. Primal and dual decomposition methods are then recovered by applying these morphisms to certain classes of composite problems.
Using this framework, we also derive a novel sufficient condition for when a problem defined by compositional data is solvable by a decomposition method. We show that the minimum cost network flow problem satisfies this condition, thereby allowing us to automatically derive a hierarchical dual decomposition algorithm for finding minimum cost flows on composite flow networks. We implement our operads, algebras, and algebra morphisms in a Julia package called AlgebraicOptimization.jl and use our implementation to empirically demonstrate that hierarchical dual decomposition outperforms standard dual decomposition on classes of flow networks with hierarchical structure.

\end{abstract}\hspace{10pt}

\keywords{optimization, convex optimization, decomposition methods, category theory}

\section{Introduction}

Decomposition methods such as primal and dual decomposition are fundamental tools to develop distributed solution algorithms for large scale optimization problems arising in machine learning \cite{boyd_distributed_2010,parikh_block_2014}, optimal control \cite{giselsson_accelerated_2013,primal_decomp_mpc}, and operations research \cite{benders1962decomp, rahmaniani_benders_2017}. These methods generally work by splitting a large problem into several several simpler subproblems and repeatedly solving these to arive at a solution to the original problem. As such, \emph{decomposition} methods are most naturally applicable to problems which themselves are \emph{composites} of subproblems, for some appropriate notion of composition. We say that such problems have \emph{compositional structure}. Often, compositional structure at the problem level is left implicit, and expertise is required to determine whether a decomposition method can be applied. Additionally, it can be non-trivial to transform a given problem into an equivalent one which can be decomposed. 

On the other hand, when compositional problem structure is made explicit, decomposition methods can often be applied more directly. Common examples of problems with explicit compositional structure are those defined on graphs and hypergraphs, where nodes represent subproblems and (hyper)edges represent coupling between subproblems. The benefit of making compositional structure explicit is that by fixing the types of problems that can inhabit nodes and the types of coupling allowed between subproblems, one can derive a decomposition method which solves problems defined over arbitrary graphs or hypergraphs \cite{boyd_decomp_2015, hallac_snapvx_nodate, Jalving2022plasmo}. In this sense, problems over graphs and hypergraphs can be viewed as standard forms for problems to which decomposition is applicable. However, it can still be challenging to translate a given problem or the data defining a problem instance into such a standard form.

We are thus motivated to develop a general framework for decomposition methods that both explicitly represents compositional structure and offers a principled way to transform problem data into such explicit structures. This paper presents a foundational step towards achieving this goal by introducing a novel algebraic/structural perspective on first-order optimization decomposition methods. 

The central abstractions of this framework revolve around \emph{undirected wiring diagrams} (UWDs) which are a special class of hypergraphs with a distinguished boundary node. Undirected wiring diagrams form a syntactic structure called an \emph{operad} which allows the hierarchical construction of UWDs by recursive substitution \cite{spivak_operad_2013}. Algebras on the operad of undirected wiring diagrams, or UWD-algebras for short, provide formal semantic interpretations to UWDs that respect their hierarchical structure. Intuitively, a UWD-algebra specifies how to build interconnected systems where the pattern of interconnection can be represented by a UWD. Consequently, UWD-algebras have been used to define network-style composition operations for an array of combinatorial objects such as labelled graphs \cite{baez_structured_2022}, electrical circuits \cite{baez2018circuits}, and Petri networks \cite{baez_compositional_2017}, as well as quantitative objects such as linear and convex relations \cite{baez2015control, stein2023compositional}, Markov processes \cite{Baez_2016_markov}, and dynamical systems \cite{libkind_operadic_2022}. UWD-algebras also provide a natural abstraction for modeling optimization problems defined on hypergraphs, which is the approach we take in this paper.

UWD-algebra morphisms then provide structure preserving transformations between different semantic interpretations of UWDs. Using this machinery, the central idea of our framework can be summarized as follows. If a first-order optimization algorithm defines an algebra morphism from a UWD-algebra of optimization problems to a UWD-algebra of dynamical systems, that algorithm decomposes problems defined on arbitrary UWDs. Furthermore, applying such a morphism to a problem generates a dynamical system to solve the problem in a distributed fashion using message passing semantics.

Turning to specific examples of this general pattern, we first define a UWD-algebra of once-differentiable (denoted $C^1$), unconstrained minimization problems called $\Opt$ and show that gradient descent is an algebra morphism from $\Opt$ to the UWD-algebra $\Dynam_D$ of discrete dynamical systems, namely it is the morphsim
\begin{equation}
    \gd^\gamma\maps \Opt\to \Dynam_D,
\end{equation}
where $\gamma>0$ is a fixed step size. 

\begin{figure}
    \centering
    \includegraphics[width=\textwidth]{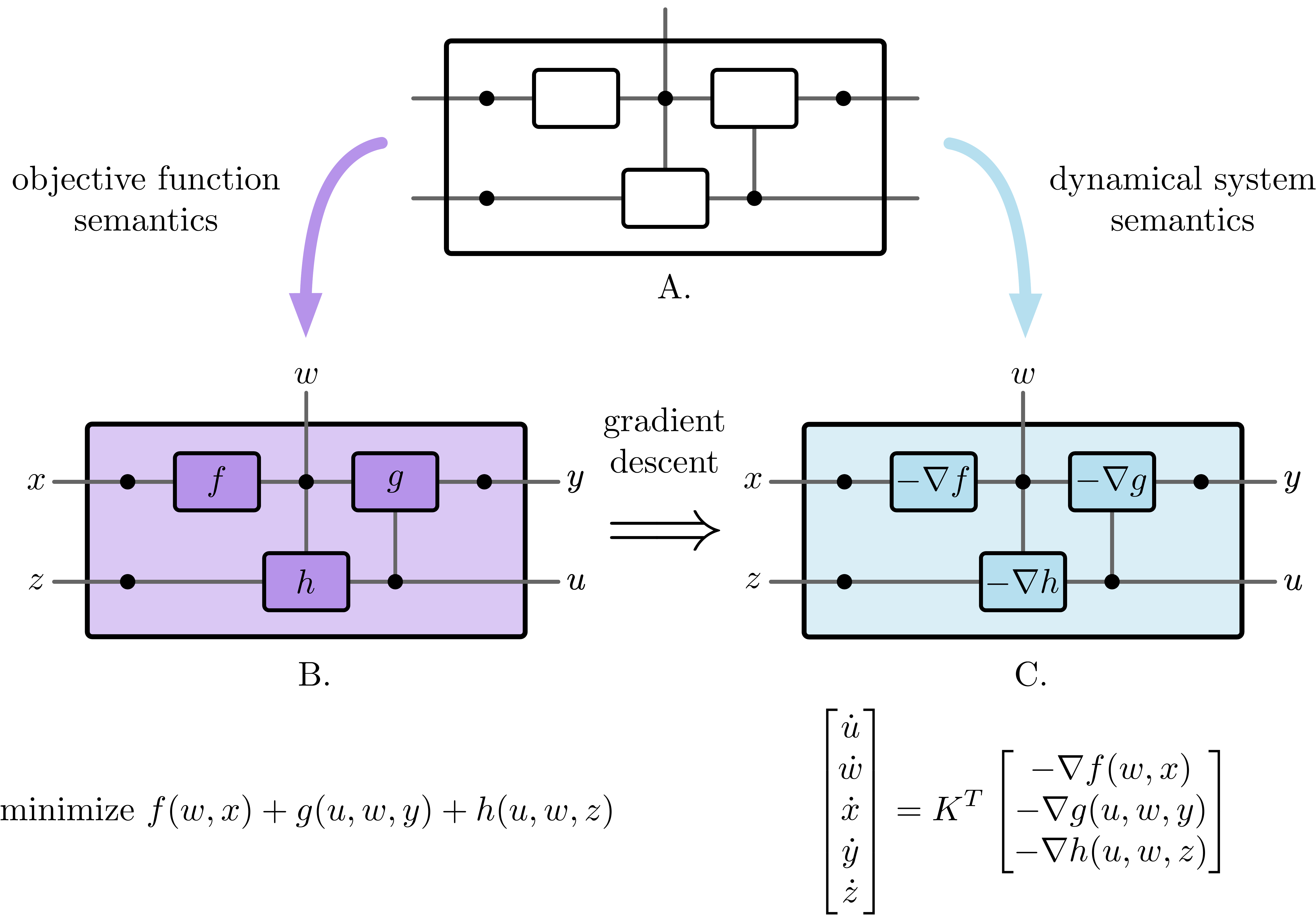}
    
    
    \caption{A. An example undirected wiring diagram (UWD), which is a special type of hypergraph. Each box has a finite set of connection points which we call \emph{ports}. We refer to the small circles as \emph{junctions} and the edges connecting ports to junctions as \emph{wires}. 
    Importantly, every UWD has a \emph{boundary}, visualized by the large outer box. We refer to ports on inner boxes and outer boxes as \emph{inner ports} and \emph{outer ports}, respectively. B. The UWD in (A) interpreted as a composite optimization problem. Subproblems inhabit boxes and their optimization variables inhabit wires. Subproblems connected by the same junction share the variables on wires incident to that junction. C. The UWD in (A) interpreted as a composite dynamical system. Subsystems inhabit boxes and their state variables inhabit wires with junctions indicating which state variables are shared. The change in a shared state variable is computed by summing changes from contributing subsystems (encoded by the matrix $K^T$). Gradient descent gives a structure-preserving map from the objective function semantics to the dynamical systems semantics.}
    \label{fig:uwds}
\end{figure}

With the gradient descent morphism, we then show that distributed message-passing versions of gradient descent and primal decomposition can be recovered by applying $\gd^\gamma$ to different types of composite problems. Naturality of $\gd^\gamma$ implies that the decomposed solution algorithm for solving a composite problem is equivalent to solving the entire composite problem directly with gradient descent. Moreover, the fact that the gradient algebra morphism preserves the hypergraph structure implies that problems defined on arbitrary UWDs can be decomposed. This is illustrated in Figure \ref{fig:uwds}. Informally, we can summarize these results by saying that the following diagram \emph{commutes} for any choice of subproblems and composition pattern:
\[\begin{tikzcd}
	{\text{Subproblems}} && {\text{Problem}} \\
	\\
	{\text{Subsystems}} && {\text{System}}
	\arrow["{\text{compose}}", from=1-1, to=1-3]
	\arrow["{\text{compose}}"', from=3-1, to=3-3]
	\arrow["{\text{gradient descent}}"{description}, from=1-1, to=3-1]
	\arrow["{\text{gradient descent}}"{description}, from=1-3, to=3-3]
\end{tikzcd}\]

Another drawback of traditional decomposition methods defined on graphs and hypergraphs which our framework addresses is that the resulting decompositions are \emph{flat}, i.e., there is a single master problem coordinating all the subproblems. However, many practical problems exhibit \emph{hierarchical} compositional structure, in which subproblems can themselves be master problems for a collection of lower level subproblems \cite{scattolini2009architectures}\cite{mesarovic2000theory} \cite{findeisen1980control}. Exploiting hierarchical structure when available can often lead to even faster solution times. Thus, a further benefit of our framework is the ability to capture and exploit such hierarchical structure. Specifically, stating our results in terms of operad algebras makes it clear that we can compose optimization problems hierarchically by recursive substitution of UWDs and that gradient descent must respect this hierarchical composition.

So far we have only spoken of gradient descent applied to non-convex differentiable minimization problems. 
In addition, this paper presents a family of closely related results for different types of problems including unconstrained and equality constrained convex programs, and various first-order solution methods such as Uzawa's algorithm \cite{uzawa_1960}
(also called gradient ascent-descent) and a primal-dual subgradient method \cite{nesterov_2009_pdsubg}. Specifically, we show that each of these types of problems defines a UWD-algebra and each solution method provides an algebra morphism to dynamical systems. The full hierarchy of results presented in this paper is summarized in Figure \ref{fig:hierarchy}. These extended results allow us to apply the same reasoning as above to derive message-passing versions of Uzawa's algorithm and dual decomposition within the same general framework. 

This novel perspective on decomposition algorithms allows us to derive a new sufficient condition for when a problem is decomposable: namely, if there exists an algebra morphism from a UWD-algebra of data on which the problem is defined to one of our algebras of optimization problems which translates the problem data into an optimization problem.
We refer to this as the \emph{compositional data condition}.
Satisfying the compositional data condition is also a constructive proof in that composing a problem translation morphism with the appropriate (sub)gradient morphism yields a distributed solution algorithm. This condition thus gives a principled way to convert problem data into explicit compositional problem structure which can be exploited via decomposition. To demonstrate the use of this sufficient condition, we show that the minimum cost network flow (MCNF) problem defines an algebra morphism from a UWD-algebra of flow networks to the UWD-algebra of unconstrained concave optimization problems, which when composed with the gradient ascent morphsim recovers a generalization of the standard dual decomposition algorithm for solving MCNF. 

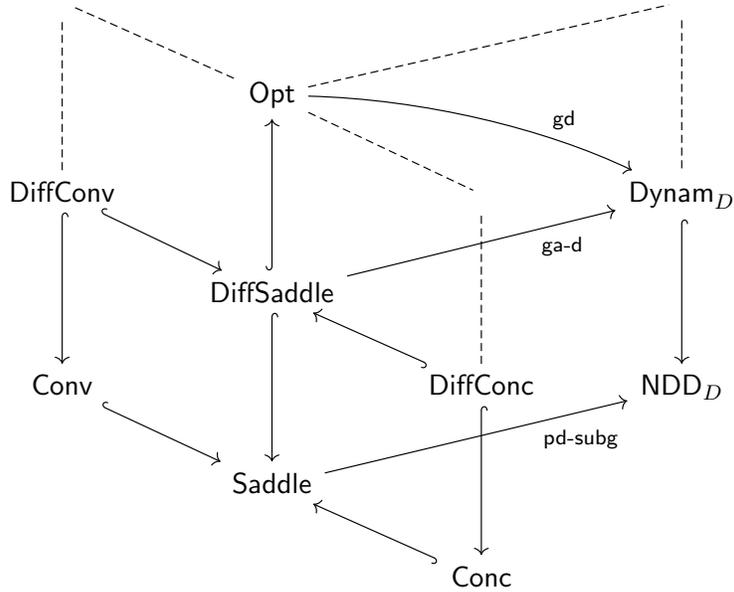
\begin{figure}
\[\begin{tikzcd}
	{} &&& {} \\
	& \Opt \\
	\DiffConv && {} & {\Dynam_D} \\
	& \DiffSaddle \\
	\Conv && \DiffConc & {\NDD_D} \\
	& \Saddle \\
	&& \Conc
	\arrow[hook, from=3-1, to=4-2]
	\arrow[hook, from=5-3, to=4-2]
	\arrow[hook, from=7-3, to=6-2]
	\arrow[hook, from=5-1, to=6-2]
	\arrow[hook, from=3-1, to=5-1]
	\arrow[hook, from=4-2, to=6-2]
	\arrow[hook, from=5-3, to=7-3]
	\arrow["\pdsubg"'{pos=0.7}, from=6-2, to=5-4]
	\arrow["\gad"'{pos=0.7}, from=4-2, to=3-4]
	\arrow[hook, from=3-4, to=5-4]
	\arrow[hook, from=4-2, to=2-2]
	\arrow["\gd"{pos=0.7}, curve={height=-12pt}, from=2-2, to=3-4]
	\arrow[dashed, no head, from=3-1, to=1-1]
	\arrow[dashed, no head, from=1-1, to=2-2]
	\arrow[dashed, no head, from=2-2, to=3-3]
	\arrow[dashed, no head, from=5-3, to=3-3]
	\arrow[dashed, no head, from=2-2, to=1-4]
	\arrow[dashed, no head, from=1-4, to=3-4]
\end{tikzcd}\]
\caption{The full hierarchy of results presented in this paper. Nodes represent the various UWD-algebras developed including those for composing saddle problems, convex problems, concave problems, all with and without differentiability assumptions, as well as composing deterministic and non-deterministic dynamical systems. Hooked arrows indicate that there is an inclusion of one algebra into another. Non-hooked arrows are the algebra morphisms including gradient descent ($\gd$), gradient ascent-descent ($\gad$) and the primal-dual subgradient method ($\pdsubg$). Composing the inclusions with the gradient algebra morphisms yields (sub)gradient descent for convex problems and (super)gradient ascent for concave problems.}
\label{fig:hierarchy}
\end{figure}

In summary, our contributions are as follows.
\begin{enumerate}
    \item We construct a family of UWD-algebras for hierarchically composing different types of problems including non-convex $C^1$ problems as well as unconstrained and equality constrained convex programs.
    \item We prove that gradient descent and Uzawa's algorithm are algebra morphisms from their respective problem algebras to the algebra of dynamical systems. We further prove that subgradient variants of these algorithms also provide algebra morphisms into a novel UWD-algebra of \emph{non-deterministic} dynamical systems.
    \item Based on these results, we provide a novel sufficient condition for when problems are decomposable in terms of algebra morphisms from data algebras into problem algebras. 
    \item As a demonstration of this sufficient condition, we construct a UWD-algebra of flow networks and prove that the translation of a flow network into its associated minimum cost network flow optimization problem yields an algebra morphism. Composing this morphism with the gradient ascent morphism yields a hierarchical dual decomposition solution algorithm.
    \item We present a prototype implementation of this framework in the Julia programming language and empirically demonstrate how exploiting higher-level compositional structure can improve solution times by utilizing hierarchical decompositions of flow networks.
\end{enumerate}

We hope that the presentation of this framework will benefit optimization researchers and practitioners in the following ways:
\begin{itemize}
    \item \textbf{Implementation}. In the same way that disciplined convex programming \cite{liberti_disciplined_2006} lowered the barrier of entry for specifying and solving convex optimization problems, our framework aims to lower the barrier of entry for \emph{composing} optimization problems and automatically generating distributed solution algorithms.
    \item \textbf{Performance}. It is well known that exploiting problem structure can improve performance, and our framework can accommodate a broad array of structures to do so. For example, we demonstrate that the algorithm produced by our framework to solve minimum cost network flow for composite networks outperforms both a centralized solution method and a conventional distributed algorithm, and this performance improvement is attained by exploiting large scale compositional structure in flow networks.  
    \item \textbf{Formal Visualization}. Undirected wiring diagrams provide an intuitive graphical syntax for composing problems. Further, UWD-algebras give a formal and unambiguous semantics to this graphical syntax, enabling diagrams to be compiled automatically to executable code.
    \item \textbf{Generalization}. In our framework, primal and dual decomposition are both recovered as special cases of applying the same UWD-algebra morphism. Consequently, our framework also enables the combination of primal and dual decomposition in a hierarchical fashion.
    We hope that by presenting a general tool for building distributed optimization algorithms, new and unique algorithms can be discovered and implemented more easily. 
    \item \textbf{Relation to other fields}. Framing our results in the language of category theory situates optimization decomposition methods in a broader mathematical landscape. For example, supervised learning \cite{fong2019backprop, cruttwell2021categorical}, optimal control~\cite{she2023characterizing}, and game theory \cite{ghani2018compositional, atkey2021compositional} have all been studied using related categorical abstractions. We hope that by framing optimization decomposition methods in the same language, we can further elucidate the connections between these fields.
\end{itemize}

The rest of the paper is organized as follows. Section \ref{sec:prelims} gives the relevant mathematical background in optimization and category theory. Specifically, Section \ref{sec:operads} reviews the definitions of operads, operad algebras, and operad algebra morphisms, which are the core abstractions we use to develop this framework. Section \ref{sec:gd} proves the fundamental result that gradient descent is an algebra morphism from the UWD-algebra of differentiable minimization problems to the UWD-algebra of dynamical systems. At this point, the core of the framework has been developed. Subsequent sections focus on extensions, generalizations, and examples.

Specifically, Section \ref{sec:uzawa} expands our framework to handle equality constrained convex programs by proving that Uzawa's algorithm yields an algebra morphism from a UWD-algebra of differentiable saddle functions to dynamical systems. Section \ref{sec:subg} generalizes this to primal-dual subgradient methods for handling equality constrained non-differentiable convex problems. To achieve this generalization we introduce new UWD-algebras for continuous and discrete non-deterministic dynamical systems, and prove that Euler's method is an algebra morphism in the non-deterministic setting. 

Turning to applications, Section \ref{sec:netflow} shows an example use of the compositional data condition by constructing a UWD-algebra of flow networks and proving that the minimum cost network flow problem defines an algebra morphism into the algebra of concave problems. Section \ref{sec:impl} presents empirical results obtained using our Julia implementation of this framework. Finally, Section \ref{sec:conclusion} offers concluding remarks and directions for future work.

\section{Preliminaries}\label{sec:prelims}

This paper brings together concepts from first-order optimization and category theory. We assume knowledge of fundamental concepts of category theory, such as categories, functors, and natural transformations. Some prior exposure to symmetric monoidal categories and lax monoidal functors is also helpful. For convenience, we have compiled a list of important categorical definitions in Appendix \ref{sec:app}. For a more complete explanation of basic category theory, a number of excellent introductory texts are available \cite{fong2018seven, riehl_context_2017, awodey2010category}. For an introduction to monoidal category theory, we refer the reader to \cite{baez2011physics}. Standard references for convex analysis and optimization are \cite{theRock} and \cite{boyd_convex}.

\subsection{Notation}

\begin{itemize}
    \item Given an $n\in\mathbb{N}$, we use $[n]$ to denote the set $\{1,\dots,n\}$.
    \item Given a linear map $T\maps V\to W$, we use $\mathcal{M}(T)$ to denote the matrix representation of $T$ with respect to a chosen basis.
    \item We use $C^1$ to denote the set of once-continuously-differentiable functions.
    \item We use $+$ to denote the coproduct of two objects and $\coprod$ to denote the coproduct of an arbitary collection of objects. Likewise, we use $\times$ to denote the Cartesian product of two objects and $\prod$ to denote the Cartesian product of an arbitrary collection of objects.
\end{itemize}

\subsection{First-order Optimization}\label{sec:foopt}

In this section, we recall the basics of first-order optimization such as gradient descent for solving unconstrained minimization problems and Uzawa's algorithm for solving constrained minimization problems. We also briefly cover the notions of sub- and super-gradients and how these generalize first-order methods to non-differentiable problems.

First-order optimization methods find extrema of real valued objective functions using only information about the objective's value (zeroth-order information) and the objective's first derivatives (first-order information). Recall that the gradient of a differentiable \define {objective function} $f\maps \R^n\to \R$ at a point $x\in\R^n$ defines the direction to move from $x$ which would most quickly increase the value of $f$. Thus, if our task is to minimize $f$, the most natural method is gradient descent, which iterates
\begin{equation}
x_{k+1}\coloneqq x_k - \gamma \nabla f(x_k)
\end{equation}
from a given starting value $x_0$, where $\gamma>0$ is a parameter called the \define{step size} or \define{learning rate} in the context of machine learning. We call $x$ the \define{decision variable} or \define{optimization variable}.
In general, the gradient descent optimization scheme can be thought of as a family of maps
\begin{equation}
    \gd_n\maps (\R^n\to\R)\to (\R^n\to\R^n)
\end{equation}
indexed by $n\in \mathbb{N}$ which takes a differentiable objective function $f\maps \R^n\to \R$ to the smooth map $-\nabla f\maps \R^n\to\R^n$ \cite{shiebler_generalized_2022}. Such a smooth map implicitly defines both a continuous and a discrete dynamical system. The continuous system is given by the system of ordinary differential equations (ODEs)
\begin{equation}
    \dot{x}\coloneqq -\nabla f(x)
\end{equation}
known as \emph{gradient flow}, while the discrete system can be obtained by applying Euler's method to the continuous system. Specifically, given a smooth map $d\maps \R^n\to\R^n$ defining the ODE $\dot{x}\coloneqq d(x)$, Euler's method for a given step size $\gamma>0$ gives the discrete system $x_{k+1}\coloneqq x_k+\gamma d(x_k)$. The discrete gradient descent algorithm is thus the result of applying Euler's method to gradient flow.

With a suitable choice of step size and mild assumptions on $f$, gradient descent is guaranteed to converge to a local minimum. Under stricter convexity assumptions on $f$, gradient descent can be shown to converge to a global minimum. Recall that a function $f\maps \R^n\to \R$ is \define{convex} if it satisfies Jensen's inequality, i.e.,
\begin{equation}\label{eq:jensen}
    f(tx + (1-ty)) \leq tf(x) + (1-t)f(y)
\end{equation}
for all $x,y\in\R^n$ and $t\in[0,1]$. When the inequality in \eqref{eq:jensen} is strict, $f$ is called \define{strictly convex}.

The basic idea behind gradient descent can be extended to also solve constrained minimization problems. A generic equality-constrained minimization problem $P$ has the standard form
\begin{equation*}
    P: \left\{\begin{array}{ll@{}ll}
    \text{minimize} & f & (x) & \\
    \text{subject to} & h & (x) = 0.
    \end{array}\right.
\end{equation*}
The function $h\maps \R^n\to\R^m$ defines $m$ \define{equality constraints}. If $f$ is convex and $h(x)=Ax-b$ is affine, then $P$ is a convex optimization problem. We consider this case for the remainder of Section \ref{sec:prelims}. A standard method for solving $P$ is via \emph{Lagrangian relaxation}, which relaxes the constrained problem in $\R^n$ to an unconstrained problem in $\R^{n+m}$. Specifically, the \define{Lagrangian} of $P$ is the function
\begin{equation}
    L(x,\lambda)\coloneqq f(x) + \lambda^T(Ax-b).
\end{equation}
The components of $\lambda\in\R^m$ are called the \define{Lagrange multipliers} and there is one for each constraint. The Lagrangian is a \define{saddle function} as it is convex in $x$ for any fixed $\lambda$ and concave in $\lambda$ for any fixed $x$. The key insight of Lagrange was that saddle points of $L$, i.e., points $(\hat{x}, \hat{\lambda}) \in \argmin_{x \in \mathbb{R}^n} \argmax_{\lambda \in \mathbb{R}^m} L(x, \lambda)$, correspond to constrained minima of $P$. As such, finding a saddle point of $L$ is sufficient for solving $P$ (\cite{boyd_convex}, \S 5.4.2).

Uzawa's algorithm is a straightforward generalization of gradient descent for finding saddle points of saddle functions. Specifically, it performs gradient \emph{descent} on the convex part of the saddle function and gradient \emph{ascent} on the concave part. Applying this to the Lagrangian $L$ gives the algorithm
\begin{equation}
    \begin{array}{ll@{}ll}
        x_{k+1} & \coloneqq x_k - \gamma \nabla_x L(x_k,\lambda_k)  \\
        \lambda_{k+1} & \coloneqq \lambda_k +\gamma\nabla_\lambda L(x_k,\lambda_k).
    \end{array}
\end{equation}
This can again be seen as the Euler discretization applied to a continuous system, which we call \emph{saddle flow}:
\begin{equation}
\begin{array}{ll@{}ll}
    \dot{x} &\coloneqq -\nabla_x L(x,\lambda) \\
    \dot{\lambda} & \coloneqq \nabla_\lambda L(x,\lambda).
    \end{array}
\end{equation}
Under similar conditions to gradient descent, Uzawa's algorithm converges to a saddle point of $L$.

Finally, most of these ideas can be generalized to the case when $f$ and $h$ are still convex and affine, respectively, but not necessarily differentiable. For example, the absolute value function is a common objective which is convex but not differentiable at zero. Minimizing such a function is achieved through a generalization of the gradient to convex functions that need not be differentiable.
\begin{definition}
    Given a convex function $f\maps\R^n\to\R$, a vector $v\in\R^n$ is called a \define{subgradient} at a point $x\in \R^n$ if for every $y\in\R^n$,
    \begin{equation}
        f(y) - f(x) \geq v^T(y - x).
    \end{equation}
    The set of all such points is called the \define{subdifferential} of $f$ at $x$.

    Similarly, when $f$ is concave, a vector $w\in\R^n$ is called a \define{supergradient} at a point $x\in\R^n$ if for every $y\in\R^n$,
    \begin{equation}
        f(y) - f(x) \leq w^T(y-x).
    \end{equation}
    The set of all such points is called the \define{superdifferential} of $f$ at $x$. The superdifferentials and subdifferentials of $f$ at $x$ are both denoted $\partial f(x)$; the distinction between super and sub will be clear from context.
\end{definition}
The subdifferential of a convex function $f$ at a point $x$ is the set of affine \emph{under}-approximations of $f$ at $x$. When $f$ is differentiable at $x$, $\partial f(x)$ is the singleton set $\{\nabla f(x)\}$, implying that the subgradient is a proper generalization of the gradient for convex functions.

The subdifferential is the basis for an iterative algorithm to find minima of non-differentiable convex functions, known as subgradient descent. For a convex objective function $f\maps\R^n\to \R$, this simply iterates
\begin{equation}
    x_{k+1} = x_k - \gamma \rho_k,
\end{equation}
where $\rho_k$ is any subgradient of $f$ at $x_k$\footnote{This subgradient method is not guaranteed to descend at each iteration. However, by keeping track of the best iterate found so far, this can be made into a descent method.}. The continuous-time analog of subgradient descent is subgradient differential inclusion, given by the system
\begin{equation}
    \dot{x}\in -\partial f(x).
\end{equation}
It is important to note that because the subdifferential is a point-to-set mapping, these dynamical systems are \emph{non-deterministic}, meaning that at a given state, they can evolve in different directions. In spite of this non-determinism, subgradient descent also converges to within an $\epsilon$-neighborhood of a minimum, where $\epsilon$ scales with step size. By using a combination of subgradients and supergradients, such methods can be extended to find extrema of non-differentiable saddle functions.

\subsection{Algebras of Undirected Wiring Diagrams}\label{sec:operads}


An \define{undirected wiring diagram (UWD)} is a combinatorial object representing a pattern of interconnection between boxes and junctions. Specifically, a UWD consists of finite sets of boxes and junctions. Each box has a finite set of ports associated to it, and each port is wired to a junction. One of the boxes is distinguished as an outer box, and the ports on the inner boxes and outer box are referred to as inner ports and outer ports respectively. An example UWD is shown in Figure \ref{fig:explicit_uwd}.A.

The goal of this section is to define algebras on the operad of undirected wiring diagrams and show how they can be specified by lax symmetric monoidal functors $(\FinSet,+)\to (\Set,\times)$, which we call \define{finset algebras}.
Similarly, we discuss how monoidal natural transformations between finset algebras give rise to UWD-algebra morphisms. These will be the core abstractions repeatedly utilized throughout the paper. We ground this section with the examples from \cite{libkind_operadic_2022} of continuous and discrete dynamical systems as UWD-algebras and Euler's method as an algebra morphism from continuous to discrete.

Intuitively, undirected wiring diagrams give a \emph{syntax} for composition, and can be imbued with various \emph{semantics}. Formally, UWDs form an \emph{operad} and a specific choice of semantics is given by an \emph{operad algebra}.


\begin{definition}[Definition 2.1.2 in \cite{spivak_operad_2013}]
    An \define{symmetric coloured operad} $\mathcal{O}$ consists of the following:
    \begin{itemize}[noitemsep]
    \item A collection of objects,
    \item For each $n\in \mathbb{N}^+$ and collection of objects $s_1,\dots,s_n,t$ in $\mathcal{O}$, a collection of morphisms $\mathcal{O}(s_1,\dots,s_n;t)$, 
    \item An identity morphism $\id_t\in\mathcal{O}(t;t)$ for each object $t$,
    \item Composition maps of the form
    \begin{equation}
        \mathcal{O}(s_1,\dots,s_n;t)\times \prod_{i=1}^n\mathcal{O}(r_{i,1},\dots,r_{i,m_i};s_i) \to \mathcal{O}(r_1,\dots,r_n;t),
    \end{equation}
    where $r_i = r_{i,1},\dots,r_{i,m_i}$.
    \item Permutation maps of the form
    \begin{equation}
        \OO(s_1,\dots,s_n;t)\to \OO(s_{\sigma(1)},\dots, s_{\sigma(n)};t)
    \end{equation}
    for every permutation $\sigma$ in the symmetric group $\Sigma_{[n]}$.
    \end{itemize}
    This data is subject to suitable identity, associativity, and symmetry laws.
\end{definition}
Henceforth, we simply use the term operad to refer to symmetric coloured operads. A basic but essential operad is that of sets. 
\begin{definition}[Example 2.1.4 in \cite{spivak_operad_2013}]
    The operad of sets, denoted $\Sets$, has sets as objects. Given sets $X_1,\dots,X_n,Y$, the morphisms in $\Sets(X_1,\dots,X_n;Y)$ are the functions $X_1\times\dots\times X_n\to Y$. Given composable morphisms $g\maps Y_1\times\dots\times Y_n\to Z$ and $f_i\maps X_{i,1}\times\dots\times X_{i,m_i}\to Y_i$ for all $i\in [n]$, the composite is the function
    \begin{equation}
        (X_{1,1}\times\dots\times X_{1,m_1})\times\dots\times (X_{n,1}\times\dots \times X_{n,m_n})\xrightarrow{f_1\times\dots\times f_n} Y_1\times\dots\times Y_n\xrightarrow{g}Z.
    \end{equation}
\end{definition}

\begin{figure}
    \centering
    \includegraphics[scale=.175]{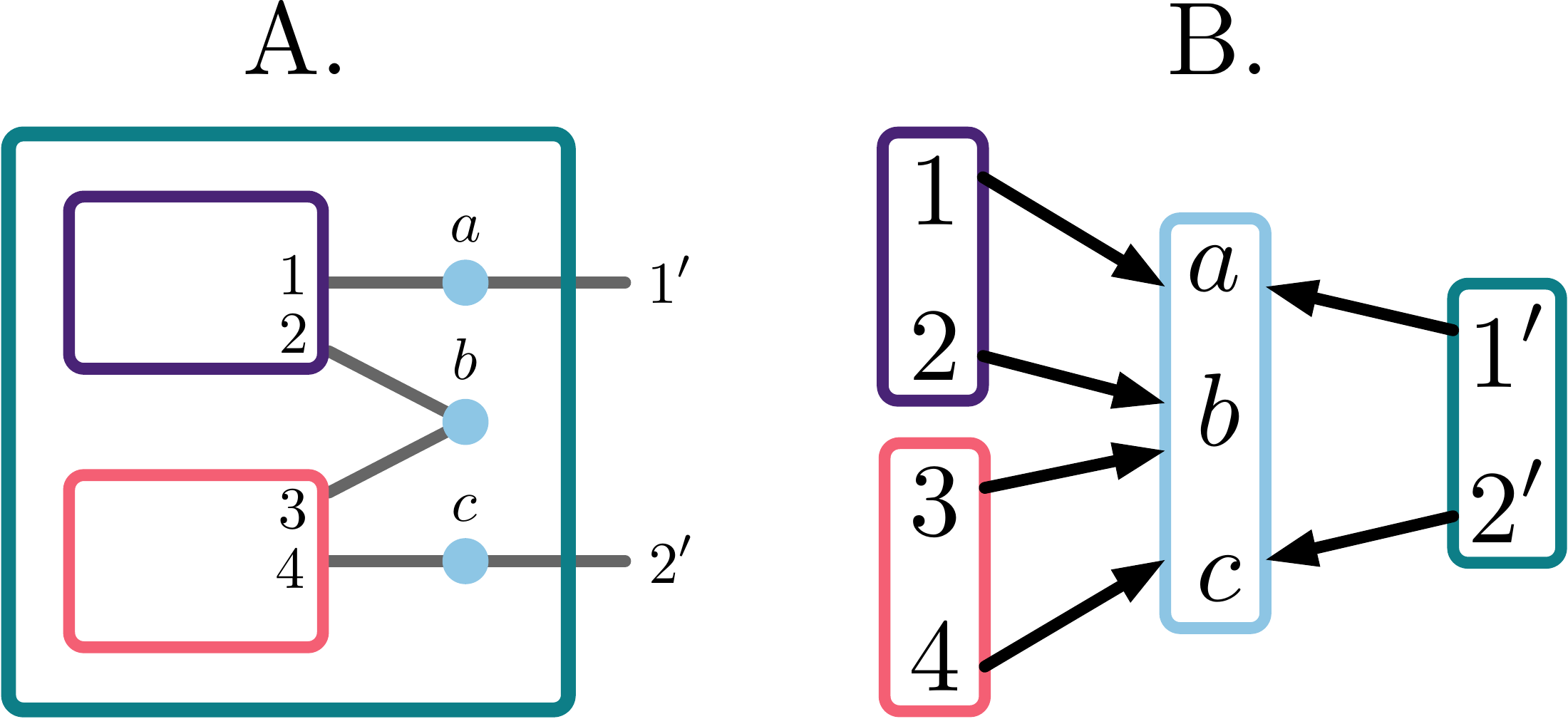}
    \caption{A. An example UWD with two inner boxes. B. The cospan representation of the UWD in (A). The signature of this UWD is $\{1,2\}+\{3,4\}\to \{a,b,c\}\leftarrow\{1',2'\}$. }
    \label{fig:explicit_uwd}
\end{figure}

To see how UWDs form an operad, first note that any UWD with $n$ inner boxes can be encoded by a pair of functions $P_1+\dots+P_n\xrightarrow{l} J\xleftarrow{r}P'$, where $P_i$ is the set of inner ports of box $i$, $J$ is the set of junctions, and $P'$ is the set of outer ports. The function $l$ then encodes the connection pattern of inner ports to junctions, while $r$ encodes the connection pattern of outer ports to junctions. Such a pair of morphisms with common codomain is called a \define{cospan}. Figure \ref{fig:explicit_uwd} shows an example of the correspondance between a UWD and its cospan representation.

\begin{definition}[Example 2.1.7 in \cite{spivak_operad_2013}]
    The operad of UWDs, denoted \textnormal{\textbf{UWD}}, has finite sets as objects. Given finite sets $P_1\dots,P_n,P'$, a morphism in $\UWD(P_1,\dots,P_n;P')$ is a UWD with those ports, i.e., a choice of finite set $J$ and a cospan $P_1+\dots+P_n\xrightarrow{l} J\xleftarrow{r}P'$. Given composable UWDs 
    \begin{equation}
        \coprod_{i\in[n]}Y_i\xrightarrow{l'} J'\xleftarrow{r'} Z,
    \end{equation}
    and
    \begin{equation}
        \coprod_{j\in [m_i]} X_{i,j}\xrightarrow{l_i}J_i\xleftarrow{r_i} Y_i,
        \qquad i \in [n],
    \end{equation}
    their composite is defined by taking the following pushout:
\[\begin{tikzcd}
	&&&& Z \\
	\\
	&& {\coprod_{i\in [n]}Y_i} && {J'} \\
	\\
	{\coprod_{i\in[n]}\coprod_{j\in[m_i]}X_{i,j}} && {\coprod_{i\in[n]}J_i} && {J''}
	\arrow["{r'}", from=1-5, to=3-5]
	\arrow["{l'}", from=3-3, to=3-5]
	\arrow["{\coprod_ir_i}"', from=3-3, to=5-3]
	\arrow["{\coprod_il_i}"', from=5-1, to=5-3]
	\arrow["p"', from=5-3, to=5-5]
	\arrow["q", from=3-5, to=5-5]
	\arrow["\lrcorner"{anchor=center, pos=0.125, rotate=180}, draw=none, from=5-5, to=3-3]
\end{tikzcd}\]
    The composite UWD is then given by the cospan with left leg $p\circ \coprod_il_i$ and right leg $q\circ r'$.
    The identity UWD on $X$ is the identity cospan $X=X=X$.
\end{definition}

Although composition of UWDs is somewhat complicated to define formally, it has an intuitive explanation. It takes a ``target" UWD with $n$ inner boxes along with $n$ ``filler" UWDs and substitutes one of the filler UWDs into each of the inner boxes of the target UWD. This operation is only defined when each filler UWD has the same number of outer ports as the number of ports on the inner box it is filling. 

Having defined the operad of UWDs and the operad of sets, we can now construct UWD-algebras, which give semantic interpretations to UWDs. These are special cases of \emph{operad functors}, which are the operadic analogue of functors between categories.

\begin{definition}[Definition 2.2.1 in \cite{spivak_operad_2013}]
    A \define{UWD-algebra} is an operad functor $F\maps \UWD\to \Sets$. Unpacking this definition, $F$ consists of an object map taking each finite set $P$ to a set $F(P)$ and a morphism map taking each UWD of the form $\Phi\maps P_1+\dots + P_n\to J\leftarrow P'$ to a function $F(\Phi)\maps F(P_1)\times\dots\times F(P_n)\to F(P')$. These maps must respect the following axioms:
    \begin{enumerate}
        \item $F$ preserves identities, i.e., $F(\id_X)=\id_{F(X)}$ for all objects $X$.
        \item $F$ respects composition, i.e., the following diagram commutes for all composable UWDs:
\[\begin{tikzcd}
	{\UWD(Y_1,\dots,Y_n;Z)\times\prod_{i\in[n]}\UWD(X_i;Y_i)} & {\UWD(X_1,\dots,X_n;Z)} \\
	\\
	{\Sets(FY_1,\dots,FY_n;FZ)\times\prod_{i\in[n]}\Sets(FX_i;FY_i)} & {\Sets(FX_1,\dots,FX_n;FZ)}
	\arrow["\circ", from=1-1, to=1-2]
	\arrow["\circ"', from=3-1, to=3-2]
	\arrow["F"', from=1-1, to=3-1]
	\arrow["F", from=1-2, to=3-2]
\end{tikzcd}\]
    where $X_i\coloneqq X_{i,1},\dots,X_{i,m_i}$ and $FX_i$ denotes objectwise application of $F$ to each $X_{i,j}$. Intuitively, this diagram says that the result of substituting UWDs and then applying the algebra must be the same as applying the algebra to each component UWD and then composing those as functions.
    \end{enumerate}
    Given a UWD $\Phi\in \UWD(P_1,\dots,P_n;P')$ and fillers $o_i\in F(P_i)$ for all $i\in[n]$, we refer to the resultant $o'\in F(P')$ obtained by $F(\Phi)(o_1,\dots,o_n)$ as an $F$-\textbf{UWD}.
\end{definition}
We can think of a UWD-algebra $F$ applied to a finite set $X$ as defining the set of possible objects which can fill a box with $X$ connection points. Then, we can think of $F$ applied to a UWD $\Phi$ as a \emph{composition function} which specifies how to compose objects filling each inner box of $\Phi$ into an object which fills the outer box of $\Phi$.

\begin{definition}[Definition 2.2.5 in \cite{spivak_operad_2013}]
    Given UWD-algebras $F,G\maps \UWD\to \Sets$, an \define{algebra morphism} $\alpha\maps F\Rightarrow G$ consists of, for every finite set $X$, a function $\alpha_X\maps F(X)\to G(X)$, known as the \define{component} of $\alpha$ at $X$. These components must obey the following law: 
    given a UWD $\Phi\maps P_1+\dots +P_n\to J\leftarrow P'$, the following \define{naturality square} must commute:
\[\begin{tikzcd}
	{\prod_{i\in [n]}F(P_i)} && {\prod_{i\in[n]}G(P_i)} \\
	{F(P')} && {G(P')}
	\arrow["{F(\Phi)}"', from=1-1, to=2-1]
	\arrow["{G(\Phi)}", from=1-3, to=2-3]
	\arrow["{\prod_i\alpha_{P_i}}", from=1-1, to=1-3]
	\arrow["{\alpha_{P'}}"', from=2-1, to=2-3]
\end{tikzcd}\]
\end{definition}
Intuitively, naturality of $\alpha$ just says that composing objects with $F$ then transforming the composite to an object of $G$ is the same as transforming each component object and composing with $G$.

There is an important connection between the operadic definitions presented above and (i) symmetric monoidal categories (SMCs), 
(ii) lax symmetric monoidal functors, and (iii) monoidal natural transformations (consult Appendix \ref{sec:smcs} for these definitions). Specifically, every SMC $(\CC,\otimes)$ has an underlying operad $\OO(\CC)$ with objects the same as those of $\CC$ and whose morphisms $\OO(\CC)(s_1,\dots,s_n;t)$ are the morphisms $s_1\otimes\dots\otimes s_n\to t$ in $\CC$. For example, $\Sets$ is the operad underlying the SMC $(\Set,\times)$. Similarly, $\UWD$ is the operad underlying the SMC $(\Cospan,+)$ of cospans between finite sets, where composition is given by pushout and the monoidal product is given by disjoint union. Furthermore, every lax symmetric monoidal functor $(F,\varphi)\maps (\CC,\otimes)\to (\DD,\boxtimes)$ gives rise to an operad functor $\OO(F)\maps \OO(\CC)\to\OO(\DD)$ which acts on objects the same as $F$ and on morphisms $f\maps X_1\otimes\dots\otimes X_n\to Y$ is defined as
\begin{equation}
    \OO(F)(f)\coloneqq F(X_1)\boxtimes\dots\boxtimes F(X_n)\xrightarrow{\varphi_{X_1,\dots,X_n}}F(X_1\otimes\dots\otimes X_n)\xrightarrow{F(f)}F(Y).
\end{equation}
Thus, we can specify UWD-algebras by specifying lax symmetric monoidal functors $(\Cospan,+)\to (\Set,\times)$, also known as \define{cospan algebras} \cite{fong_hypergraph_2019}. Finally, the components of monoidal natural transformations $\alpha\maps F\Rightarrow G$ between cospan algebras will also be natural as components of a UWD-algebra morphism $\alpha \maps \OO(F)\Rightarrow \OO(G)$ \cite{hermida_representable_2000}. Because of this tight connection, we will typically use the concepts of SMCs, lax symmetric monoidal functors, and monoidal natural transformations interchangably with their operadic counterparts moving forward.


Throughout this paper, we utilize a particularly well behaved class of cospan algebras, namely those generated by lax symmetric monoidal functors from $(\FinSet,+)\to (\Set,\times)$, which we call \define{finset algebras}. The following simple, but fundamental, example illustrates the use of finset algebras to define linear maps which merge or copy components of their inputs.

\begin{example}[Pushforwards and Pullbacks]\label{ex:push-pull}
    \begin{figure}
        \centering
        \includegraphics[width=\textwidth]{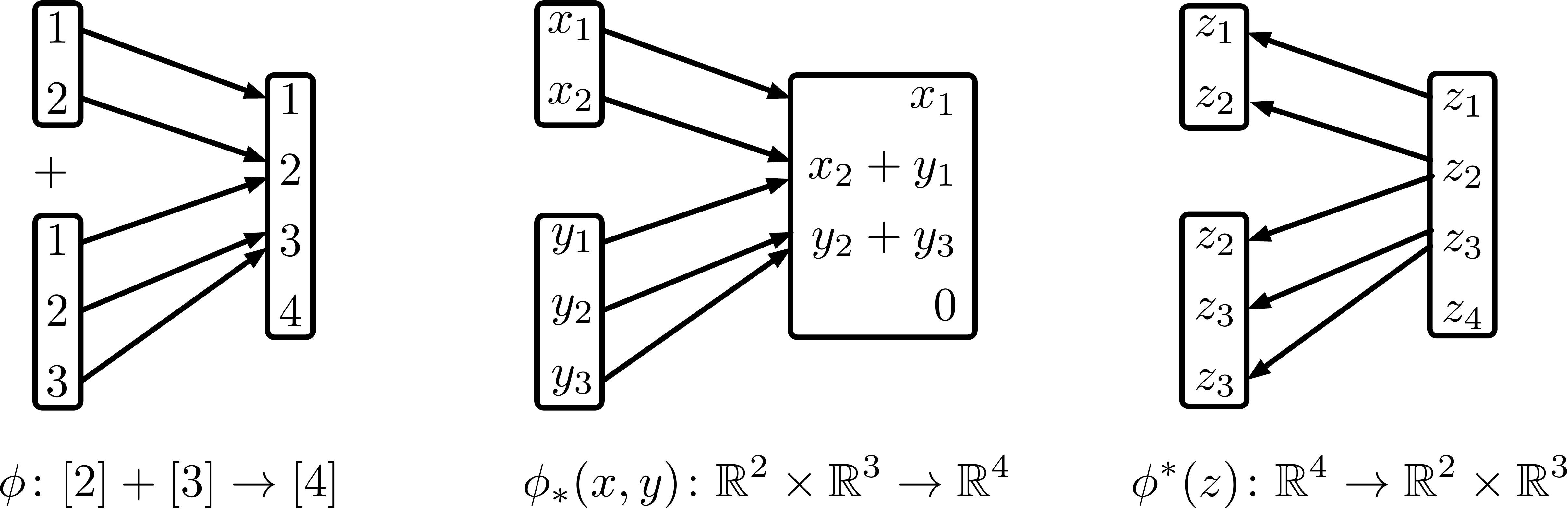}
        \caption{A visualization of the relationship between a function $\phi\maps[2]+[3]\to [4]$, the action of the pushforward $\phi_*$ on an input pair $(x,y)$, and the action of the pullback $\phi^*$ on an input $z$. The pushforward sums components of $(x,y)$ according to $\phi$ while the pullback duplicates components of $z$ according to $\phi$.}
        \label{fig:push-pull}
    \end{figure}

    Given a finite set $N$, the set of functions $\{N\to\R\}=\R^N$ is a vector space, which is called the \define{free vector space} generated by $N$. If we let $n\coloneqq |N|$ be the cardinality of $N$, then choosing an ordering $N\xrightarrow{\cong}[n]$ on $N$ induces an isomorphism of vector spaces $\R^N\cong \R^n$. 
    We can thus think of the free vector space $\R^N$ as standard $n$-dimensional Euclidean space, but where each basis vector is labelled by an element of $N$. This free vector space construction will be essential for creating UWD-algebras whose semantics are given by functions defined on $\R^n$ such as objective functions and dynamical systems. First, we consider two ways of constructing linear maps between free vector spaces that give rise to finset algebras.

    Given a function $\phi\maps N\to M$ between finite sets, the \define{pushforward} along $\phi$ is the linear map $\phi_*\maps\R^N\to\R^M$ defined by
    \begin{equation}
        \phi_*(x)_j \coloneqq \sum_{i\in \phi^{-1}(j)} x_i
    \end{equation}
    for all $x\in\R^N$ and $j\in M$. This lets us define the \emph{strong}\footnote{A strong symmetric monoidal functor is one whose comparison maps are all isomorphisms (see Definition \ref{smf}).} finset algebra $(-)_*\maps (\FinSet,+)\to (\Set,\times)$, which takes finite sets to their free vector spaces and functions between finite sets to their pushforwards. The product comparison is given by the isomorphisms $\varphi_{N,M}\maps \R^N\times\R^M\cong \R^{N+M}$, while the unit comparison $\varphi_0\maps 1\to \R^\emptyset$ is unique because $\R^\emptyset$ contains only the zero vector.

    Given a function $\phi\maps X+Y\to Z$, we can apply this finset algebra to produce a linear map which takes a pair of vectors $x\in\R^X$ and $y\in\R^Y$ as input, applies the product comparison isomorphism to concatenate $x$ and $y$ into a single vector, then applies the pushforward $\phi_*$ to the resultant vector. 
    

    A dual operation to the pushforward along $\phi$ is the \define{pullback} along $\phi$, which is the linear map $\phi^*\maps \R^M\to \R^N$ defined by
    \begin{equation}
        \phi^*(y)_i\coloneqq y_{\phi(i)}
    \end{equation}
    for all $y\in\R^M$ and $i\in N$. It is straightforward to show that the pullback along $\phi$ is dual as a linear map to the pushforward along $\phi$. The pullback operation defines a \emph{contravariant} finset algebra $(-)^*\maps (\FinSet^{op},+)\to (\Set,\times)$ 
    taking finite sets to their free vector spaces and functions to their pullbacks. The product comparisons and unit comparison are the same as those of $(-)_*$, but in the opposite direction. This algebra takes a function $\phi\maps X+Y\to Z$ and a vector $z\in \R^Z$ and produces a pair of vectors in $\R^X\times\R^Y$ by copying components of $z$ into the respective components of the results dictated by $\phi$. This is illustrated in Figure \ref{fig:push-pull}.
    

    
\end{example}

Intuitively, the action of a finset algebra on a morphism $\phi\maps X_1+\dots + X_n\to Y$ specifies how to compose an $n$-tuple of objects defined on $X_1,\dots,X_n$ into a single object defined on $Y$. Such an action is sufficient to specify a UWD-algebra, as described in the following lemma.

\begin{lemma}\label{lem:cspLift}
    Given a lax symmetric monoidal functor $(F,\varphi)\maps (\FinSet,+)\to(\Set,\times)$, there is a lax symmetric monoidal functor $(F\textnormal{Csp},\varphi')\maps (\Cospan,+)\to(\Set,\times)$ defined by the following maps:
    \begin{itemize}
        \item On objects, $F\textnormal{Csp}$ takes a finite set $X$ to the set of triples of the form $(S\in\FinSet, o\in F(S), m\maps X\to S)$.
        \item Given a cospan $\Phi\coloneqq (X\xrightarrow{l}J\xleftarrow{r} Y)$, the function $F\textnormal{Csp}(\Phi)\maps F\textnormal{Csp}(X)\to F\textnormal{Csp}(Y)$ is defined by
        \begin{equation}
            (S,o,m)\mapsto (S+_X J, F(p_S)(o), p_J\circ r),
        \end{equation}
        where $S+_X J, p_S$, and $p_J$ are defined by the pushout:
\[\begin{tikzcd}
	& X && Y \\
	S && J \\
	& {S+_XJ}
	\arrow["m", from=1-2, to=2-1]
	\arrow["l"', from=1-2, to=2-3]
	\arrow["r", from=1-4, to=2-3]
	\arrow["{p_S}"', from=2-1, to=3-2]
	\arrow["{p_J}", from=2-3, to=3-2]
	\arrow["\lrcorner"{anchor=center, pos=0.125, rotate=135}, draw=none, from=3-2, to=1-2]
\end{tikzcd}\]

        \item Given objects $N,M\in\FinSet$, the product comparison
        \begin{equation}
            \varphi'_{N,M}\maps F\textnormal{Csp}(N)\times F\textnormal{Csp}(M)\to F\textnormal{Csp}(N+M)
        \end{equation}
        is given by
        \begin{equation}
            \varphi'_{N,M}((S_1,o_1,m_1),(S_2,o_2,m_2))\coloneqq (S_1+S_2, \varphi_{S_1,S_2}(o_1,o_2),m_1+m_2),
        \end{equation}
        where $\varphi_{S_1,S_2}$ is the product comparison of $F$.
        \item The unit comparison $\varphi'_0\maps \1\to F\textnormal{Csp}(\emptyset)$ picks the triple $(\emptyset,\varphi_0,\emptyset\xrightarrow{!}\emptyset)$,
        where $\varphi_0$ is the unit comparison of $F$.
    \end{itemize}
\end{lemma}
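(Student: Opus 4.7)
The plan is to verify two things: (i) that $F\textnormal{Csp}$ is a functor $\Cospan\to\Set$, and (ii) that together with the $\varphi'$ defined above it forms a lax symmetric monoidal functor. The central technical tool throughout is the \emph{pushout pasting lemma}: adjacent pushout squares compose to a pushout rectangle, and coproducts of pushouts are again pushouts.

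For functoriality, I first check identities. The identity cospan on $X$ is $X\xrightarrow{\id}X\xleftarrow{\id}X$. Given $(S,o,m)\in F\textnormal{Csp}(X)$, the pushout of $S\xleftarrow{m}X\xrightarrow{\id}X$ is (canonically) $S$ itself, with $p_S=\id_S$ and $p_J=m$. Hence $F\textnormal{Csp}(\id_X)(S,o,m)=(S,F(\id_S)(o),m)=(S,o,m)$, using that $F$ preserves identities. For composition, given composable cospans $\Phi\colon X\xrightarrow{l_1}J_1\xleftarrow{r_1}Y$ and $\Psi\colon Y\xrightarrow{l_2}J_2\xleftarrow{r_2}Z$ and an element $(S,o,m)\in F\textnormal{Csp}(X)$, I would compute $F\textnormal{Csp}(\Psi)\big(F\textnormal{Csp}(\Phi)(S,o,m)\big)$ as two successive pushouts (first gluing $S$ to $J_1$ along $X$, then gluing the result to $J_2$ along $Y$) and compare this with $F\textnormal{Csp}(\Psi\circ\Phi)(S,o,m)$, which glues $S$ directly to the composite apex $J_1+_YJ_2$ along $X$. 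Pushout pasting identifies the two apex objects canonically, and the two induced legs on $S$ agree. Functoriality of $F$ then ensures that the element produced from $o$ matches on the nose.

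For lax monoidality, naturality of $\varphi'$ reduces to the observation that coproducts commute with pushouts: given cospans $\Phi\colon X\to X'$, $\Psi\colon Y\to Y'$ and elements $(S_i,o_i,m_i)$, the disjoint union of the two defining pushout squares is itself the pushout defining $F\textnormal{Csp}(\Phi+\Psi)$ applied to the pair. Naturality of the product comparison $\varphi$ of $F$ then lets us transport $(o_1,o_2)$ through either path with the same result. The associativity, left/right unit, and symmetry coherence axioms can be verified by chasing elements: they reduce to the corresponding axioms for $\varphi$ together with the universal property of pushouts and the fact that $(\Cospan,+)$ inherits its symmetric monoidal structure from coproducts in $\FinSet$.

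I expect the main obstacle to be the composition-preservation step, as it requires carefully tracking the comparison isomorphism between the iterated pushout and the single pushout on the composite apex, and then invoking functoriality of $F$ on this comparison. Naturality of $\varphi'$ is similar in flavor but lighter, since coproducts of pushouts behave more transparently. Once these two pieces are in hand, the remaining coherence axioms are routine diagram chases using properties already assumed of $(F,\varphi)$.
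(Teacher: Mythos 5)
Your overall strategy is sound, but it is worth saying up front that it is not the route the paper takes: the paper's entire proof is a citation, observing that $F\textnormal{Csp}$ is exactly Fong's $F$-decorated cospan construction and that the passage from the resulting hypergraph category to a cospan algebra is the equivalence of Fong--Spivak, adding only that a direct proof is ``straightforward.'' Your proposal is that direct proof: functoriality via pushout pasting, naturality of $\varphi'$ via the fact that coproducts of pushouts are pushouts, and the coherence axioms by element chases reducing to the corresponding axioms for $(F,\varphi)$. That is the right skeleton, and it buys self-containedness at the cost of redoing work that the decorated-cospan literature has already packaged.

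There is, however, one wrinkle you gloss over, and it is precisely the point the citation is doing work for. You claim that in the composition step the two elements of $F\textnormal{Csp}(Z)$ ``match on the nose.'' They do not: the iterated pushout $(S+_XJ_1)+_YJ_2$ and the single pushout $S+_X(J_1+_YJ_2)$ are only canonically isomorphic, so the two triples agree only after transporting along that isomorphism, i.e.\ only up to isomorphism of triples. The same issue already appears in your identity check ($S+_XX$ is canonically isomorphic to $S$, not equal to it), and more seriously in well-definedness: morphisms of $\Cospan$ are isomorphism classes of cospans, and two isomorphic representatives $X\to J\leftarrow Y$, $X\to J'\leftarrow Y$ give literally different outputs $(S+_XJ,\dots)$ and $(S+_XJ',\dots)$. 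To make the direct proof close, you must either (i) define $F\textnormal{Csp}(X)$ as \emph{isomorphism classes} of triples $(S,o,m)$ --- which is exactly what Fong's decorated cospans do, and what the paper implicitly relies on by citing them --- or (ii) fix pushouts coherently, or (iii) settle for a pseudofunctor (the concession the paper makes explicitly, in a footnote, for its $\FG$ algebra). With that adjustment the rest of your outline, including the reduction of naturality of $\varphi'$ to commutation of coproducts with pushouts and of the coherence axioms to those of $(F,\varphi)$, goes through as you describe.
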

\begin{proof}
    This is an application of the equivalence between hypergraph categories and cospan algebras proven in \cite{fong_hypergraph_2019} to the hypergraph category of $F$-decorated cospans introduced in \cite{fong_algebra_2016}. This is also straightforward to prove directly.
\end{proof}

We can develop intuition for this lemma by again thinking in terms of UWDs. The set of triples $(S, o, m\maps X\to S)$ obtained by applying $F\textnormal{Csp}$ to $X$ corresponds to the set of fillers for a UWD box with $X$ ports. However, the object filling a box is allowed to have a domain $S$ other than $X$, and $m\maps X\to S$ determines the relationship between the ports of the box and the domain of the filling object. Then, treating the cospan $\Phi\coloneqq X\xrightarrow{l}J\xleftarrow{r} Y$ as a UWD, the map $l\maps X\to J$ specifies the gluing of inner ports to junctions. By taking the pushout of $m$ and $l$, the map $p_S\maps S\to S+_XJ$ specifies how to glue together the domains of the objects filling each box: if two ports are mapped to the same junction by $l$, then their image under $m$ will be mapped to the same element of $S+_XJ$ by $p_S$.

With this perspective, the fundamental operation is applying $F$ to the function $p_S$, which should specify how to glue together parts of an object with domain $S$ to produce an object with domain $S+_XJ$ according to $p_S$. The rest of the operations are essentially book-keeping operations to produce a valid UWD with this new object as its filler.

\begin{remark}\label{rem:simple}
    Consider a UWD-algebra $F\mathrm{Csp}$ generated by a finset algebra $F$ by applying Lemma \ref{lem:cspLift}. If we restrict our attention to UWDs of the form $\Phi\coloneqq (X\xrightarrow{\phi}J\cong Y)$, so that every junction maps to a unique outer port, and only consider algebra objects $F\mathrm{Csp}(X)$ of the form $(S, o\in F(S), X\xrightarrow{\cong}S)$, we get a subalgebra that corresponds directly to the finset algebra $F$. Specifically, the pushout needed to define the function $F\mathrm{Csp}(\Phi)\maps F\mathrm{Csp}(X)\to F\mathrm{Csp}(Y)$ is
\[\begin{tikzcd}
	& X && Y \\
	S && J \\
	& J
	\arrow["\phi", from=1-2, to=2-3]
	\arrow["\cong"', from=1-4, to=2-3]
	\arrow["{\cong}"', from=1-2, to=2-1]
	\arrow["\phi"', from=2-1, to=3-2]
	\arrow["{\cong}", from=2-3, to=3-2]
	\arrow["\lrcorner"{anchor=center, pos=0.125, rotate=135}, draw=none, from=3-2, to=1-2]
\end{tikzcd}\]
    and the result of $F\mathrm{Csp}(\Phi)(S,o,X\xrightarrow{\cong}S)$ is $(J, F(\phi)(o), Y\xrightarrow{\cong}J)$.

    Therefore, in this restricted case that the set of outer ports is isomorphic to the set of junctions and each UWD filler has a domain which is isomorphic to the set of ports on the box it is filling, we obtain the resulting composite object by simply applying our finset algebra $F$ to the function $\phi$ which maps inner ports to junctions.


    We will make use of this simplification in some of our examples, but we emphasize that all this machinery still works for general UWDs and algebra objects.
\end{remark}

In a similar vein, we can use monoidal natural transformations between finset algebras to define monoidal natural transformations between cospan algebras.
\begin{lemma}\label{lem:cspNatLift}
    Suppose we are given two lax symmetric monoidal functors $F,G\maps(\FinSet,+)\to(\Set,\times)$ and a monoidal natural transformation $\alpha\maps F\Rightarrow G$. Let $F\textnormal{Csp}$ and $G\textnormal{Csp}$ be the cospan algebras which result from applying Lemma \ref{lem:cspLift} to $F$ and $G$. Then there is a monoidal natural transformation $\alpha'\maps F\textnormal{Csp}\Rightarrow G\textnormal{Csp}$ with components $\alpha'_N\maps F\textnormal{Csp}(N)\to G\textnormal{Csp}(N)$ defined by
    \begin{equation}
        (S,v\in F(S), m\maps N\to S)\mapsto (S,\alpha_S(v),m).
    \end{equation}
\end{lemma}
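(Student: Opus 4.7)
The plan is to verify the two required properties: that $\alpha'$ is a natural transformation between the cospan algebras, and that it is monoidal. Both reduce, after unpacking, to the corresponding properties of $\alpha$ applied at carefully chosen finite sets. Since the components $\alpha'_N$ only alter the middle coordinate of a triple $(S, v, m)$ by applying $\alpha_S$, while leaving $S$ and $m$ untouched, the $\FinSet$-part of the data on both sides of every diagram we need to check will match identically, and only the decoration coordinate will need genuine work.

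First I would verify naturality. Fix a cospan $\Phi \coloneqq (X \xrightarrow{l} J \xleftarrow{r} Y)$ and chase a generic triple $(S, o, m) \in F\mathrm{Csp}(X)$ around the naturality square. Going first along $F\mathrm{Csp}(\Phi)$ and then along $\alpha'_Y$ produces
\[
(S +_X J,\ \alpha_{S +_X J}(F(p_S)(o)),\ p_J \circ r),
\]
while going first along $\alpha'_X$ and then along $G\mathrm{Csp}(\Phi)$ produces
\[
(S +_X J,\ G(p_S)(\alpha_S(o)),\ p_J \circ r).
\]
The carrier sets and the port maps agree because the pushout $S +_X J$ and the maps $p_S, p_J$ are determined by the underlying $\FinSet$ data $(X, Y, J, l, r, m)$ and do not depend on $F$ or $G$. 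Equality of the middle components is exactly the naturality square for $\alpha$ applied to the morphism $p_S : S \to S +_X J$ in $\FinSet$, which holds by hypothesis.

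Next I would verify monoidality. Unpacking the product-comparison condition, for finite sets $N, M$ and triples $(S_1, o_1, m_1), (S_2, o_2, m_2)$, both $\alpha'_{N+M} \circ \varphi'_{N,M}$ and $\varphi'_{N,M} \circ (\alpha'_N \times \alpha'_M)$ yield a triple with first coordinate $S_1 + S_2$ and third coordinate $m_1 + m_2$; matching their middle coordinates amounts to the identity
\[
\alpha_{S_1+S_2}\bigl(\varphi^F_{S_1,S_2}(o_1, o_2)\bigr) = \varphi^G_{S_1,S_2}\bigl(\alpha_{S_1}(o_1), \alpha_{S_2}(o_2)\bigr),
\]
which is exactly the monoidal naturality axiom for $\alpha$ at $(S_1, S_2)$. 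The unit-comparison condition is verified analogously by observing that $\varphi'_0$ picks out the triple $(\emptyset, \varphi_0, !)$ in each algebra and invoking the unit axiom $\alpha_\emptyset(\varphi^F_0) = \varphi^G_0$.

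The only mild obstacle is keeping the bookkeeping of three-component triples straight and recognizing that the $\FinSet$-valued coordinates (the carrier and the port map) are universal in the cospan data and so transport identically across $F\mathrm{Csp}$ and $G\mathrm{Csp}$; once this is noted, the diagrammatic content collapses entirely onto the existing naturality and monoidal axioms of $\alpha$. No new constructions are required.
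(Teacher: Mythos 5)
Your proof is correct, but it takes a different route from the paper. The paper disposes of this lemma purely by citation: it invokes the equivalence between cospan algebras and hypergraph categories/functors from Fong--Spivak, applying Theorem 2.8 of Fong's decorated-cospans paper to $\alpha$ to obtain a hypergraph functor and then transporting it across that equivalence. You instead give a direct verification: chasing a triple $(S,o,m)$ around the naturality square for a cospan $\Phi$, observing that the pushout data $S+_XJ$, $p_S$, $p_J$ depend only on the $\FinSet$-level maps $m,l,r$ and hence coincide on both sides, and reducing the middle coordinate to naturality of $\alpha$ at $p_S$; the monoidality conditions likewise collapse onto the monoidal axioms of $\alpha$ at $(S_1,S_2)$ and at the unit. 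This is exactly the kind of elementary argument the paper alludes to (for the companion Lemma~\ref{lem:cspLift} it remarks the statement is ``also straightforward to prove directly'') but does not write out. The citation-based proof buys brevity and situates the lemma in a general theory (it gives the hypergraph-category structure for free and handles the coherence bookkeeping once and for all); your direct proof buys self-containedness and makes transparent the key mechanism, namely that $\alpha'$ only touches the decoration coordinate while the carrier and port map are fixed by the underlying cospan combinatorics. The only point worth flagging is that composition in $\Cospan$ involves a choice of pushout, so one should note that the same chosen pushout is used in defining both $F\mathrm{Csp}(\Phi)$ and $G\mathrm{Csp}(\Phi)$ — which your argument implicitly and correctly relies on, and which is at the same level of rigor as the paper's own treatment.
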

\begin{proof}
    This is an application of the equivalence between cospan algebra morphisms and hypergraph functors proven in \cite{fong_hypergraph_2019} applied to the hypergraph functor induced by applying Theorem 2.8 in \cite{fong_algebra_2016} to $\alpha$.
\end{proof}

\textbf{In summary}, we can build a UWD-algebra from a finset algebra $F$ by applying Lemma~\ref{lem:cspLift} to produce a cospan algebra $F\mathrm{Csp}$ and then taking its underlying operad functor $\OO(F\mathrm{Csp})$. Similarly, given finset algebras $F$ and $G$ and a monoidal natural transformation $\alpha\maps F\Rightarrow G$, we can produce a UWD-algebra morphism by applying Lemma \ref{lem:cspNatLift} to obtain a monoidal natural transformation $\alpha'\maps F\mathrm{Csp}\Rightarrow G\mathrm{Csp}$. The components of $\alpha'$ then also define the algebra morphism $\alpha'\maps\OO(F\mathrm{Csp})\Rightarrow \OO(G\mathrm{Csp})$.


All the UWD-algebras we define in this paper will be generated by a finset algebra in the way described above. Similarly, all algebra morphisms we define will come from monoidal natural transformations between finset algebras. Consequently, for the remainder of this paper, we elide the difference between finset algebras and their associated UWD-algebras. Specifically, whenever we define a finset algebra $F$, uses of $F$ beyond the definition implicitly refer to $\OO(F\textnormal{Csp})$.




\begin{example}[Building Linear Maps with UWDs]\label{ex:linearmaps}

    \begin{figure}
        \centering
        \includegraphics[width=\textwidth]{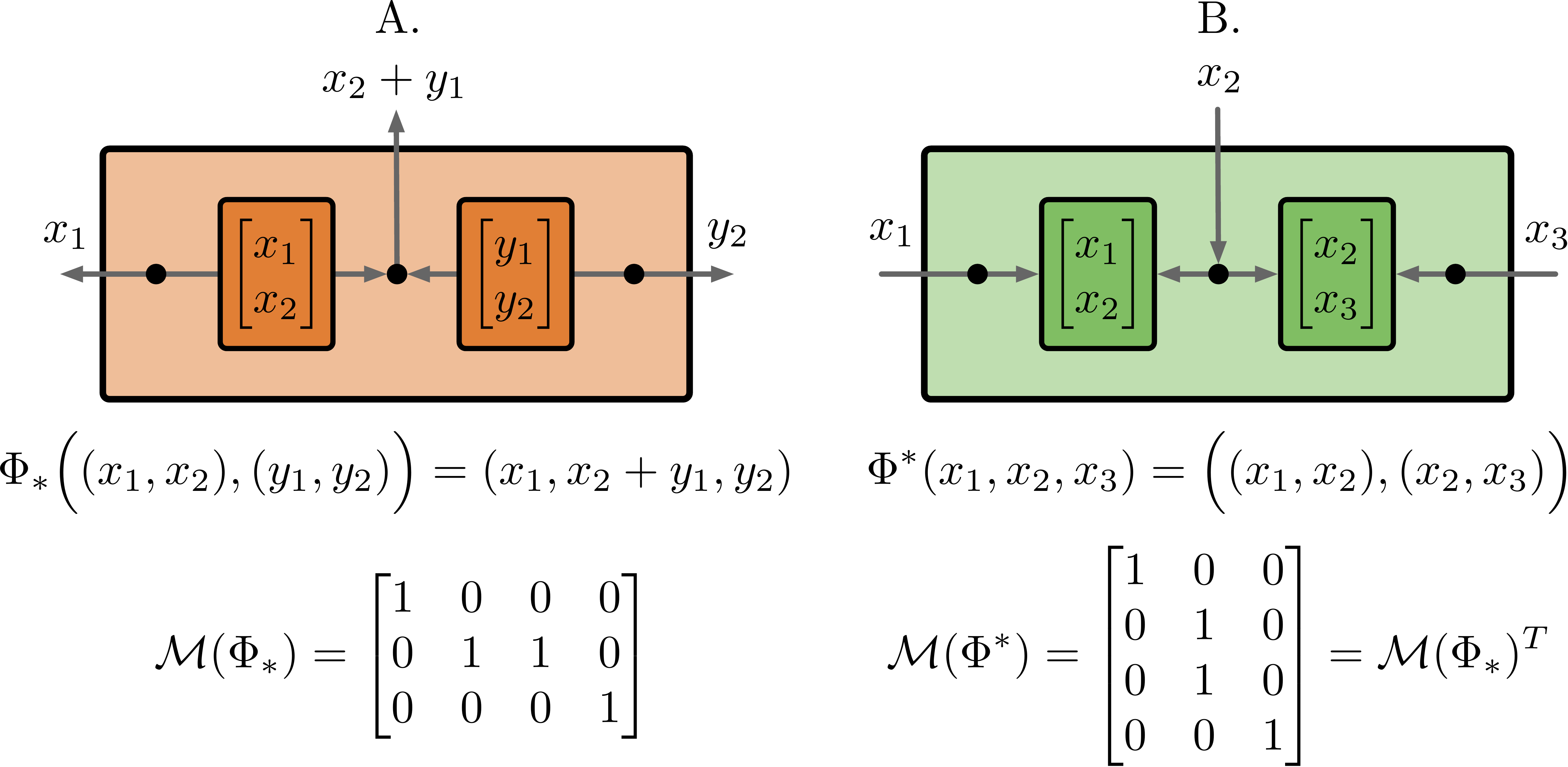}
        \caption{A. An example of the \emph{collect algebra} acting on a UWD $\Phi$ with two inner boxes. The resulting linear map $\Phi_*$ takes a pair of vectors in $\R^2\times \R^2$ as input and produces a vector in $\R^3$ by summing the components of the inputs which share the same junction in $\Phi$. Arrows are added to emphasize that the flow of information is directed from the inner boxes to the boundary box. B. An example of the \emph{distribute algebra} acting on $\Phi$. The resulting linear map $\Phi^*$ takes a vector in $\R^3$ and produces a pair of vectors in $\R^2\times \R^2$ by copying components of the input which share the same junction in $\Phi$. Arrows are added to emphasize that the flow of information is directed from the boundary box to the inner boxes. The matrix representations are with respect to the standard bases. 
        Note that these examples satisfy the simplifying assumptions of Remark \ref{rem:simple}.}
        \label{fig:linear_maps}
    \end{figure}
    
    We can apply Lemma \ref{lem:cspLift} to the pushforward and pullback algebras defined in Example \ref{ex:push-pull} to lift them to UWD-algebras. This lets us use the graphical syntax of UWDs to specify linear maps. 
    
    We call the UWD-algebra obtained by applying Lemma \ref{lem:cspLift} to the pushforward algebra the \define{collect algebra} because it takes vectors for each inner box of a UWD and collects them into a vector for the outer box. Likewise, we call the UWD-algebra obtained by applying Lemma \ref{lem:cspLift} to the pullback algebra the \define{distribute algebra} because it takes a vector for the outer box and distributes it to produce vectors for each inner box. Both these operations are shown in Figure \ref{fig:linear_maps}.

    Simple though they are, these algebras will be indispensable to defining the subsequent algebras of optimization problems and dynamical systems.
\end{example}

\begin{example}[Composing Dynamical Systems]\label{ex:dynam}
    There is a finset algebra $\Dynam\maps (\FinSet,+)\to (\Set,\times)$ given by the following maps:
    \begin{itemize}
        \item On objects, $\Dynam$ takes a finite set $N$ to the set of smooth maps $\{\upsilon\maps \R^N\to \R^N\}$.
        \item Given a morphism $\phi\maps N\to M$ in $\FinSet$, $\Dynam(\phi)\maps \Dynam(N)\to\Dynam(M)$ is defined by the function $\upsilon\mapsto \phi_*\circ \upsilon \circ\phi^*$.
        \item Given finite sets $N$ and $M$, the product comparison $\varphi_{N,M}\maps \Dynam(N)\times\Dynam(M)\to\Dynam(N+M)$ is given by the function
        \begin{equation}
            (\upsilon,\rho)\mapsto \iota_{N*}\circ\upsilon \circ\iota_N^* + \iota_{M*}\circ\rho\circ\iota_M^*,
        \end{equation}
        where $\iota_N\maps N\to N+M$ and $\iota_M\maps M\to N+M$ are the natural inclusions.
        \item The unit comparison $\varphi_0\maps \1\to\Dynam(\emptyset)$ is uniquely determined because the set of maps $\{\R^0\to \R^0\}$ is a singleton.
    \end{itemize}
    This example is the content of Lemmas 15 and 16 in \cite{baez_compositional_2017}.

    We can now apply Lemma \ref{lem:cspLift} to obtain a UWD-algebra for composing dynamical systems. 
    A filler for a box in a $\Dynam$-UWD with $X$ ports is a triple $(S, \upsilon\maps \R^S\to\R^S, m\maps X\to S)$. We refer to such an object as an \emph{open} dynamical system, because it designates only a subset of the system's state variables as being open to sharing with other systems. This is determined by the map $m\maps X\to S$. The connection pattern of ports to junctions in a UWD then specifies which open state variables of one system are shared with which open state variables of other systems. The overall intuitive picture of composite systems obtained by $\Dynam$ is that the change in a shared state variable is the sum of changes from contributing subsystems, where a UWD indicates which state variables are shared.

    Now, we explicitly unpack the definition of composition given by the $\Dynam$ UWD-algebra. First, given a UWD $\Phi\maps X_1+\dots + X_n\xrightarrow{l} J \xleftarrow{r} Y$ and component open dynamical systems, $(S_i, \upsilon_i\maps \R^{S_i}\to\R^{S_i}, m_i\maps X_i\to S_i)$ for all $i\in [n]$, the product comparison is applied to the component systems to produce a single system which can be thought of as the disjoint union of the input systems. Explicitly, this is the open system $(S\coloneqq S_1+\dots+S_n, \upsilon\maps \R^S\to \R^S, m\coloneqq m_1 + \dots + m_n)$, where the dynamics $\upsilon$ are given by
    \begin{equation}\label{eq:lax-d}
        \sum_{i=1}^n \iota_{S_i *}\circ\upsilon_i\circ\iota_{S_i}^*.
    \end{equation}
    Here, the $\iota_{S_i}$'s are the natural inclusions of $S_i$ into $S$, so \eqref{eq:lax-d} can be thought of as simply ``stacking" the dynamics of $\upsilon_1$ through $\upsilon_n$, with no interactions occurring between subsystems. Then, to complete the composition, let $X\coloneqq X_1+\dots+X_n$ and form the following pushout.
\[\begin{tikzcd}
	& X && Y \\
	S && J \\
	& {S+_XJ}
	\arrow["m", from=1-2, to=2-1]
	\arrow["l"', from=1-2, to=2-3]
	\arrow["r", from=1-4, to=2-3]
	\arrow["{p_S}"', from=2-1, to=3-2]
	\arrow["{p_J}", from=2-3, to=3-2]
	\arrow["\lrcorner"{anchor=center, pos=0.125, rotate=135}, draw=none, from=3-2, to=1-2]
\end{tikzcd}\]
    The composite open dynamical system is then
    \begin{equation}
        (S+_X J, p_{S*}\circ\upsilon \circ p_S^*, p_J\circ r).
    \end{equation}
    The dynamics of this composite system uses the \emph{distribute} algebra to distribute the current state vector to the subsystems and the \emph{collect} algebra to collect the results of each subsystem applied to its part of the state vector. Crucially, components of the state vector which inhabit shared junctions of $\Phi$ are copied to the subsystems incident to those junctions by the distribute algebra, and the results from each incident subsystem are summed by the collect algebra to obtain a state update for the junction.

    At this point, we introduce some notation to make it easier to write down composite systems. Given a collection of $m$ systems $\{\upsilon_j\maps\R^{N_j}\to\R^{N_j}\mid j\in[m]\}$, we denote the action of $\Dynam$'s product comparison $\varphi_{N_1,\dots,N_m}(\upsilon_1,\dots,\upsilon_m)$ as
    \begin{equation}
        \begin{bmatrix}
            \upsilon_1 \\
            \vdots \\
            \upsilon_m
        \end{bmatrix}.
    \end{equation}
    We use this notation to emphasize that the system obtained by applying the product comparison just ``stacks up" the individual input dynamical systems.

    We can slightly modify $\Dynam$ to produce a UWD-algebra for composing discrete dynamical systems, as in Proposition 3.1 of \cite{libkind_operadic_2022}. There is a finset algebra $\Dynam_D\maps(\FinSet,+)\to(\Set,\times)$ given by the following maps:
    \begin{itemize}
        \item On objects, $\Dynam_D$ takes a finite set $N$ to the set of discrete dynamical systems $\{\upsilon \maps \R^N\to\R^N\}$.
        \item Given a morphism $\phi\maps N\to M$ in $\FinSet$, $\Dynam_D(\phi)\maps\Dynam_D(N)\to\Dynam_D(M)$ is defined by the function $v\mapsto \id_{\R^M} + \phi_*\circ (v - \id_{\R^N})\circ \phi^*$, where the $\id$'s are in $\Vect$. 
        \item The comparison maps are the same as $\Dynam$.
    \end{itemize}    

    Proposition 3.3 of \cite{libkind_operadic_2022} shows that Euler's method for a given stepsize $\gamma>0$ defines the components of an algebra morphism from continuous to discrete systems. Specifically, there is a monoidal natural transformation $\Euler^\gamma\maps \Dynam\Rightarrow\Dynam_D$ with components
    \begin{equation}
        \Euler^\gamma_N\maps\Dynam(N)\to\Dynam_D(N),\;
        \upsilon\mapsto \id_{\R^N} + \gamma\upsilon.
    \end{equation}
\end{example}

\begin{remark}\label{rem:message_passing}
Systems inhabiting $\Dynam$-UWDs have a natural \emph{message passing semantics}. Specifically, at each time point, the ``distribute'' step is given by applying the distribute algebra to the current state vector to project the relevant components to their associated subsystems. The parallel computation step is given by applying each of the subsystem dynamics, and the ``collect'' step is given by applying the collect algebra to the results. These steps will enable our formalization of decomposition methods in the subsequent sections.
\end{remark}

\section{Gradient Descent is an Algebra Morphism}\label{sec:gd}
This section develops the fundamental result of the paper: that gradient descent is an algebra morphism from a UWD-algebra of unconstrained, differentiable minimization problems to the UWD-algebra of discrete dynamical systems (Example \ref{ex:dynam}). 

\subsection{Composing Optimization Problems}

To show that gradient descent is an algebra morphism, we must first specify our UWD-algebra of optimization problems. In this algebra, subproblems will compose by summing their objective functions subject to the pattern of decision variable sharing dictated by a UWD.

\begin{restatable}[]{lemma}{optFunctorial}
    There is a finset algebra $(\Opt,\varphi)\maps (\FinSet,+)\to (\Set,\times)$ defined as follows.
    \begin{itemize}
        \item On objects, $\Opt$ takes a finite set $N$ to the set of functions $\{f\maps\R^N\to \R \mid f\in C^1\}$.
        \item On morphisms, $\Opt$ takes a function $\phi\maps N\to M$ to the function $\Opt(\phi)\maps \Opt(N)\to\Opt(M)$ defined by
        \begin{equation}
            \Opt(\phi)\maps f\mapsto f\circ\phi^*.
        \end{equation}
        \item Given objects $N,M\in\FinSet$, the product comparison $\varphi_{N,M}\maps \Opt(N)\times \Opt(M)\to \Opt(N+M)$ is given by pointwise addition in $\R$, i.e.,
        \begin{equation}
            \varphi_{N,M}(f,g)(z)\coloneqq f(\iota_N^*(z)) + g(\iota_M^*(z))
        \end{equation}
        for all $z\in \R^{N+M}$. Here, $\iota_N\maps N\to N+M$ and $\iota_M\maps M\to N+M$ are the inclusions of $N$ and $M$ into their disjoint union.
        \item The unit comparison $\varphi_0\maps \1\to \Opt(\emptyset)$ picks out the zero function $0 \mapsto 0$.
    \end{itemize}
\end{restatable}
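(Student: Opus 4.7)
The plan is to unwind the four bullet points in turn and verify: (i) $\Opt$ is well-defined on both objects and morphisms, (ii) $\Opt$ is functorial, and (iii) the comparisons $\varphi_{N,M}$ and $\varphi_0$ satisfy the lax symmetric monoidal coherence axioms (associativity, unitality, naturality, and symmetry).

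For well-definedness on morphisms, observe that for any $\phi\maps N\to M$ the pullback $\phi^*\maps\R^M\to\R^N$ is linear, hence smooth, so if $f\in C^1$ then $f\circ\phi^*$ is $C^1$ by the chain rule. Functoriality then follows from the contravariance of the pullback operation noted in Example \ref{ex:push-pull}: $\Opt(\id_N)(f)=f\circ\id_{\R^N}=f$, and for composable $\phi\maps N\to M$, $\psi\maps M\to P$, $\Opt(\psi\circ\phi)(f)=f\circ(\psi\circ\phi)^*=f\circ\phi^*\circ\psi^*=\Opt(\psi)(\Opt(\phi)(f))$.

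The bulk of the work is verifying the monoidal coherence axioms, and the plan is to reduce each one to the corresponding identity for the commutative monoid $(\R,+,0)$. For associativity, both $\varphi_{N+M,P}\circ(\varphi_{N,M}\times\id)$ and $\varphi_{N,M+P}\circ(\id\times\varphi_{M,P})$ send $(f,g,h)$ to the function $z\mapsto f(\iota_N^*z)+g(\iota_M^*z)+h(\iota_P^*z)$ on $\R^{N+M+P}$ up to the associator, so associativity reduces to associativity of addition in $\R$. Unitality uses that the zero function is the identity for pointwise addition. Symmetry reduces to commutativity of $+$. Naturality of $\varphi_{N,M}$ in $(N,M)$ follows from the identity $\iota_N^*(\phi+\psi)^*=\phi^*\iota_{N'}^*$, obtained by applying contravariance to the coproduct universal property equation $(\phi+\psi)\circ\iota_N=\iota_{N'}\circ\phi$ (and likewise for $M$).

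The main obstacle is not conceptual but bookkeeping: keeping the pullbacks and coproduct injections straight across multi-argument inputs. No new idea is required. Indeed, once one observes that $\Opt$ factors conceptually as ``apply the contravariant pullback algebra $(-)^*$ from Example \ref{ex:push-pull}, then pointwise-add in $\R$'', each coherence axiom for $\Opt$ inherits directly from the corresponding axiom for the pullback algebra together with the commutative monoid structure of $(\R,+,0)$.
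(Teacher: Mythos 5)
Your proof is correct and takes essentially the same route as the paper's: functoriality comes from contravariance of the pullback together with the fact that precomposition with a linear map preserves $C^1$ (the paper packages this as the composite $O\circ(-)^*$), the coherence axioms for the comparisons reduce to the commutative monoid $(\R,+,0)$, and naturality of $\varphi$ rests on compatibility of pullbacks with coproduct injections. Your derivation of the key identity $\iota_N^*\circ(\phi+\psi)^*=\phi^*\circ\iota_{N'}^*$ by applying $(-)^*$ to $(\phi+\psi)\circ\iota_N=\iota_{N'}\circ\phi$ is just a slightly more direct phrasing of what the paper obtains via strong monoidality of the pullback functor and the projection diagrams, so the substance is identical.
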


\begin{proof}
    We construct the covariant functor $\Opt$ as the composite
    \[
      \Opt \coloneqq \FinSet\xrightarrow{(-)^*}\Vect^\text{op}\xrightarrow{O}\Set
    \]
    of two contravariant functors, where $(-)^*$ is the distribute algebra from Example \ref{ex:linearmaps}.
    The contravariant functor $O$ takes a vector space $V$ to the set of $C^1$ functions $V\to \R$. Similarly to $(-)^*$, the functor $O$ acts on linear maps $T\maps V\to W$ by precomposition: $O(T)(f) \coloneqq f\circ T$. This is plainly functorial, and thus $\Opt \coloneqq O \circ (-)^*$ is functorial. It is also clear that precomposition of a $C^1$ function with a linear map is again $C^1$.

    
    We now need to verify that the product comparison is natural, i.e., that the following diagram commutes: 
\[\begin{tikzcd}
	{\Opt(X)\times\Opt(Y)} &&& {\Opt(N)\times\Opt(M)} \\
	\\
	{\Opt(X+Y)} &&& {\Opt(N+M)}
	\arrow["{\Opt(\phi)\times\Opt(\psi)}", from=1-1, to=1-4]
	\arrow["{\varphi_{X,Y}}"', from=1-1, to=3-1]
	\arrow["{\varphi_{N,M}}", from=1-4, to=3-4]
	\arrow["{\Opt(\phi+\psi)}"', from=3-1, to=3-4]
\end{tikzcd}\]
    for all finite sets $X,Y,N,M$ and functions $\phi\maps X\to N$ and $\psi\maps Y\to M$. To do so, let $(f,g)\in\Opt(X)\times\Opt(Y)$. Following the top path yields the function $h_1\coloneqq f\circ\phi^*\circ \iota_N^* + g\circ\psi^*\circ\iota_M^*$ while following the bottom path yields the function $h_2\coloneqq (f\circ\iota_X^* + g\circ\iota_Y^*)\circ (\phi+\psi)^*$. To show that these are equivalent, first note that since $(-)^*$ is a strong monoidal functor, the map $(\phi+\psi)^*$ is isomorphic to the map $\phi^*\oplus\psi^*$. Similarly, since $(-)^*$ takes coproducts in $\FinSet$ to products in $\Vect$, we know that $\iota_X^*\maps \R^{X+Y}\to \R^X$ is equivalent to the projection $\pi_{\R^X}\maps\R^X\oplus\R^Y\to\R^X$, and similarly for all the above $\iota^*$'s. Thus we can rewrite $h_1$ as $f\circ\phi^*\circ \pi_{\R^N} + g\circ\psi^*\circ\pi_{\R^M}$ and $h_2$ as $(f\circ \pi_{\R^X} + g\circ \pi_{\R^Y})\circ (\phi^*\oplus \psi^*)$.

    Finally, noting that the following pair of diagrams commutes
\begin{equation}\label{eq:oplus_comm}\begin{tikzcd}
	{\R^N\oplus\R^M} && {\R^X\oplus \R^Y} && {\R^N\oplus\R^M} && {\R^X\oplus\R^Y} \\
	{\R^N} && {\R^X} && {\R^M} && {\R^Y}
	\arrow["{\phi^*\oplus \psi^*}", from=1-1, to=1-3]
	\arrow["{\pi_{\R^X}}", from=1-3, to=2-3]
	\arrow["{\pi_{\R^N}}"', from=1-1, to=2-1]
	\arrow["{\phi^*}"', from=2-1, to=2-3]
	\arrow["{\phi^*\oplus\psi^*}", from=1-5, to=1-7]
	\arrow["{\psi^*}"', from=2-5, to=2-7]
	\arrow["{\pi_{\R^M}}"', from=1-5, to=2-5]
	\arrow["{\pi_{\R^Y}}", from=1-7, to=2-7]
\end{tikzcd}\end{equation}
    shows that $h_1$ is equivalent to $h_2$.

    The associativity and symmetry of the product comparison follow from the associativity and symmetry of pointwise addition of functions, while the unitality follows from the unitality of the zero function with respect to pointwise addition. 
\end{proof}

\begin{example}[$\Opt$-UWDs]\label{ex:opt}

Recall the UWD in Figure \ref{fig:uwds}.A, which we call $\Phi$. Note that the junctions of $\Phi$ are in one-to-one correspondence with outer ports, so we will use the simplification of Remark \ref{rem:simple}. Applying $\Opt$ to this UWD gives a function $\Opt(\Phi)\maps \Opt([2])\times \Opt([3])\times\Opt([3])\to\Opt([5])$ specifying how to compose three subproblem objectives of appropriate dimensions to yield a composite objective on $\R^5$. 

Given $C^1$ subproblems $f\maps \R^2\to \R,g\maps \R^3\to\R$, and $h\maps \R^3\to \R$, the result $\Opt(\Phi)(f,g,h)$ is the composite problem
\begin{equation}\label{eq:simple_ex}
    \textnormal{minimize } f(w,x) + g(u,w,y) + h(u,w,z),
\end{equation}
corresponding to the $\Opt$-UWD in Figure \ref{fig:uwds}.B. To see why, first note that the distribute algebra copies components of the decision variable to their respective subproblems dictated by the mapping of inner ports to junctions. Then, the product comparison of $\Opt$ simply sums the results of applying each objective to the components of the decision variable they received.
\end{example}

\subsection{Solving Composite Problems with Gradient Descent}

We can now prove that gradient descent defines an algebra morphism from $\Opt$ to $\Dynam_D$. Our approach is to first show that gradient flow gives a morphism to continuous systems and then recover gradient descent as a corollary by composing with the Euler's method algebra morphism.

\begin{restatable}[]{theorem}{flowCsp}\label{thm:flow}
    Recalling the symmetric monoidal functor $\Dynam$ from Example \ref{ex:dynam}, there is a monoidal natural transformation $\flow\maps \Opt\Rightarrow \Dynam$ with components 
    \begin{equation*}
    \flow_N\maps \Opt(N)\to \Dynam(N)
    \end{equation*}
    defined by
    \begin{equation}
        \flow_N(f)\coloneqq x\mapsto -\nabla f(x).
    \end{equation}
\end{restatable}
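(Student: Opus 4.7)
The plan is to verify three things in sequence: (i) that each $\flow_N$ is well-defined as a map $\Opt(N)\to \Dynam(N)$, (ii) that the family $\{\flow_N\}$ is natural in $N$, and (iii) that it preserves the lax monoidal structure (product and unit comparisons). The central technical ingredient throughout will be the identity
\begin{equation*}
\nabla(f\circ \phi^*) \;=\; \phi_* \circ (\nabla f)\circ \phi^*,
\end{equation*}
valid for every $\phi\maps N\to M$ in $\FinSet$ and every $f\in\Opt(N)$. This is the chain rule together with the fact, noted in Example \ref{ex:push-pull}, that $\phi_*$ is the linear-algebraic adjoint of $\phi^*$, so that the Jacobian transpose of $\phi^*$ is precisely $\phi_*$.

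For (i), well-definedness is immediate: if $f\in C^1$ then $-\nabla f\maps \R^N\to\R^N$ has the required regularity to lie in $\Dynam(N)$. For (ii), let $\phi\maps N\to M$ and $f\in \Opt(N)$. The top-then-right path of the naturality square sends $f$ to $\Dynam(\phi)(-\nabla f) = -\phi_*\circ \nabla f\circ \phi^*$, while the left-then-bottom path sends $f$ to $-\nabla(f\circ \phi^*)$. The chain-rule/adjointness identity above shows these agree. No further case analysis is required, since every morphism in $\FinSet$ is handled uniformly.

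For (iii), the product-comparison square at $(N,M)$ requires that, for all $(f,g)\in\Opt(N)\times\Opt(M)$, the two functions
\begin{equation*}
\iota_{N*}\circ(-\nabla f)\circ \iota_N^* \;+\; \iota_{M*}\circ(-\nabla g)\circ \iota_M^* \qquad\text{and}\qquad -\nabla\!\bigl(f\circ\iota_N^* + g\circ\iota_M^*\bigr)
\end{equation*}
coincide as elements of $\Dynam(N+M)$. By linearity of the gradient operator, the right-hand side splits as $-\nabla(f\circ\iota_N^*) - \nabla(g\circ\iota_M^*)$, and applying the same chain-rule identity to each summand (with $\phi = \iota_N$ and $\phi=\iota_M$) matches it termwise with the left-hand side. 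The unit comparison is trivial, since $\Opt(\emptyset)$ consists of the zero function, whose gradient is the unique element of $\Dynam(\emptyset)$.

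The one step I expect to require a little care is the chain-rule identity itself, because it implicitly uses that the transpose (adjoint) of the pullback $\phi^*\maps \R^M\to\R^N$ is the pushforward $\phi_*\maps \R^N\to \R^M$. I would either cite this duality from Example \ref{ex:push-pull} or verify it in one line by checking it on the standard basis indexed by $N$ and $M$. Everything else is formal: naturality reduces to a single chain-rule calculation, and monoidality reduces to the same calculation together with linearity of $\nabla$.
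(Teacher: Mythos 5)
Your proposal is correct and follows essentially the same route as the paper: naturality is reduced to the chain-rule identity $\nabla(f\circ\phi^*)=\phi_*\circ(\nabla f)\circ\phi^*$ (the paper phrases this via the matrix $K=\mathcal{M}(\phi^*)$ and its transpose, using that the pushforward is dual to the pullback), and monoidality is handled by linearity of $\nabla$ plus the same identity applied to the inclusions $\iota_N,\iota_M$, with the unit comparison trivial. No substantive differences to report.
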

\begin{proof}
    We first need to verify naturality. For this, we need to ensure that
\[\begin{tikzcd}
	{\Opt(N)} && {\Opt(M)} \\
	\\
	{\Dynam(N)} && {\Dynam(M)}
	\arrow["{\Opt(\phi)}", from=1-1, to=1-3]
	\arrow["{\flow_N}"', from=1-1, to=3-1]
	\arrow["{\flow_M}", from=1-3, to=3-3]
	\arrow["{\Dynam(\phi)}"', from=3-1, to=3-3]
\end{tikzcd}\]
commutes for any choice of $N,M\in \FinSet$ and $\phi\maps N\to M$. Fix any optimization problem $f\in \Opt(N)$. The top path of the naturality square yields the vector field
\begin{equation}\label{eq:top_path}
    (y \in \R^M) \mapsto -\nabla (f(\phi^*(y))) = -\nabla(f(Ky)),
\end{equation}
where we use $K$ to denote the matrix representation of $\phi^*$ with respect to the standard basis. Then, by applying the chain rule, we have that \eqref{eq:top_path} is equal to 
\begin{equation}
    y\mapsto -K^T\nabla f(Ky).
\end{equation}
Likewise, following the bottom path yields
\begin{equation}\label{eq:botpath}
    y\mapsto \phi_*(-\nabla f(\phi^*(y))) = -\phi_*(\nabla f(\phi^*(y))).
\end{equation}
Noting that the pushforward is dual to the pullback shows that \eqref{eq:top_path} is equivalent to \eqref{eq:botpath}, as desired.

To verify that the natural transformation is monoidal, we need to show that the diagrams
\[\begin{tikzcd}
	{\Opt(N)\times\Opt(M)} &&& {\Dynam(N)\times\Dynam(M)} \\
	\\
	{\Opt(N+M)} &&& {\Dynam(N+M)}
	\arrow["{\flow_N\times \flow_M}", from=1-1, to=1-4]
	\arrow["{\flow_{N+M}}"', from=3-1, to=3-4]
	\arrow["{\varphi^{\Opt}_{N,M}}"', from=1-1, to=3-1]
	\arrow["{\varphi^{\Dynam}_{N,M}}", from=1-4, to=3-4]
\end{tikzcd}\]
and
\[\begin{tikzcd}
	\1 && {\text{Opt}(\emptyset)} \\
	&& {\text{Dynam}(\emptyset)}
	\arrow["{\varphi_0^\Opt}", from=1-1, to=1-3]
	\arrow["{\varphi_0^\Dynam}"', from=1-1, to=2-3]
	\arrow["{\flow(\emptyset)}", from=1-3, to=2-3]
\end{tikzcd}\]
commute for all $N,M\in\FinSet$. The unit comparison diagram commutes trivially (noting that the constant zero function $\R^0\to \R$ is the same as the unique vector field $\R^0\to\R^0$). For the product comparison diagram, let $f\in \Opt(N)$ and $g\in \Opt(M)$ be optimization problems. Following the top path yields the vector field
\begin{equation}
    z\mapsto \iota_{N*}(-\nabla f(\iota_N^*(z))) + \iota_{M*}(-\nabla g(\iota_M^*(z))), 
\end{equation}
while following the bottom path yields
\begin{equation}
    z\mapsto -\nabla(f(\iota_N^*(z))) -\nabla(g(\iota_M^*(z))).
\end{equation}
These vector fields are seen to be equivalent by the same reasoning used to verify naturality.
\end{proof}

\begin{corollary}
    Given a positive real number $\gamma$, there is a monoidal natural transformation $\gd^\gamma\maps \Opt\Rightarrow \Dynam_D$ with components $\gd^\gamma_N\maps \Opt(N)\to\Dynam_D(N)$ given by the function
    \begin{equation}
        f\mapsto \id_{\R^N} - \gamma\nabla f.
    \end{equation}
\end{corollary}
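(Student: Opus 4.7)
The plan is to recognize this corollary as an immediate consequence of the vertical composition of two monoidal natural transformations already established earlier in the paper: the gradient flow transformation $\flow\maps\Opt\Rightarrow\Dynam$ from Theorem \ref{thm:flow}, and the Euler's method transformation $\Euler^\gamma\maps\Dynam\Rightarrow\Dynam_D$ recalled in Example \ref{ex:dynam}. Setting $\gd^\gamma \coloneqq \Euler^\gamma \circ \flow$ (vertical composition), I need only verify that the components of this composite match the claimed formula and invoke the general fact that vertical composition of monoidal natural transformations is again a monoidal natural transformation.

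Concretely, I would first compute the component at a finite set $N$. For any $f\in\Opt(N)$, unwinding the definitions yields
\begin{equation}
(\Euler^\gamma_N \circ \flow_N)(f) = \Euler^\gamma_N(-\nabla f) = \id_{\R^N} + \gamma(-\nabla f) = \id_{\R^N} - \gamma\nabla f,
\end{equation}
which is exactly the formula in the statement. Naturality of $\gd^\gamma$ with respect to morphisms $\phi\maps N\to M$ in $\FinSet$ then follows by pasting the naturality square for $\flow$ (proved in Theorem \ref{thm:flow}) on top of the naturality square for $\Euler^\gamma$ (proved in Proposition 3.3 of \cite{libkind_operadic_2022}); since both squares commute, so does their vertical concatenation.

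Compatibility with the product comparisons and unit comparisons is handled in exactly the same way: stacking the monoidal squares for $\flow$ and $\Euler^\gamma$ yields the monoidal square for the composite. This gives a monoidal natural transformation of finset algebras, which by Lemma \ref{lem:cspNatLift} extends to a UWD-algebra morphism $\gd^\gamma\maps\Opt\Rightarrow\Dynam_D$ as required.

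I do not anticipate any obstacle: every ingredient is already in place, and the entire argument is a two-line invocation of vertical composition followed by the component calculation above. If anything, the only thing worth emphasizing in the write-up is that this compositional construction is precisely what justifies thinking of discrete gradient descent as ``gradient flow followed by Euler discretization'' at the level of operad algebras, mirroring the same relationship at the level of individual dynamical systems.
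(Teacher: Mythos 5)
Your proposal is correct and is exactly the paper's argument: the paper also defines $\gd^\gamma$ as the vertical composite $\Opt \xRightarrow{\flow} \Dynam \xRightarrow{\Euler^\gamma} \Dynam_D$ and relies on the same component calculation. Your write-up merely spells out the pasting of naturality and monoidality squares, which the paper leaves implicit.
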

\begin{proof}
    The claimed monoidal transformation is the composite of monoidal transformations
    \[
    \Opt \xRightarrow{\flow} \Dynam \xRightarrow{\Euler^\gamma} \Dynam_D,
    \]
    where the Euler transformation was described at the end of Section \ref{sec:operads}.
\end{proof}

In words, the transformation $\gd^\gamma$ takes a differentiable minimization problem to the dynamical system which implements gradient descent on it. Naturality of gradient descent means that composing the gradient descent dynamical systems of subproblems is equivalent to composing subproblems and then taking the gradient descent dynamical system of the composite problem. This equivalence is what enables composite problems to be solved using the distributed message passing semantics of composite dynamical systems, illustrated by the following example.

\begin{example}[Distributed Gradient Descent]\label{ex:dgd}
Recall Problem \eqref{eq:simple_ex}:
\begin{equation*}
    \textnormal{minimize } f(w,x) + g(u,w,y) + h(u,w,z).
\end{equation*}
Here, we assume that $f\maps\R^2\to\R, g\maps\R^3\to\R$, and $h\maps\R^3\to\R$ are all $C^1$ functions. We showed in Example \ref{ex:opt} that this problem is the composite produced by filling the UWD $\Phi$ in Figure \ref{fig:uwds}.A with the component functions $f,g,$ and $h$. This composite has the form
\begin{equation}
    P(u,w,x,y,z) \coloneqq \boldsymbol{1}^T\begin{bmatrix} 
        f(w,x) \\ g(u,w,y) \\ h(u,w,z)
    \end{bmatrix}.
\end{equation}
 In other words, the composite problem takes an input vector, distributes it to the subproblems, and sums the results. If we apply $\gd^\gamma$ to the composite problem $P$ and let $s\coloneqq (u,w,x,y,z)^T$, we get the discrete system
\begin{equation}\label{eq:comp_system}
    s_{k+1} = s_k -\gamma\nabla P(s_k),
\end{equation}
for a chosen step-size $\gamma>0$.
This is a centralized gradient descent algorithm for minimizing $P$. However, by Theorem \ref{thm:flow} and the fact that $P$ is \emph{decomposed} by the UWD $\Phi$, we know that this system is equivalent to applying $\gd^\gamma$ to each individual subproblem and composing these discrete dynamical systems according to $\Phi$, i.e.,
\begin{equation}\label{eq:dynam_d_comp}
    \Dynam_D(\Phi)\Bigl((\id_{\R^2}-\gamma\nabla f), (\id_{\R^3}-\gamma\nabla g), (\id_{\R^3}-\gamma\nabla h)\Bigr).    
\end{equation}
Simplifying \eqref{eq:dynam_d_comp} results in the equivalent dynamical system
\begin{equation}\label{eq:distributed-system}
    s_{k+1} = s_k-\gamma K^T\begin{bmatrix}
        \nabla f \\ \nabla g \\ \nabla h
    \end{bmatrix}Ks,
\end{equation}
where $K$ is the pullback of the function mapping inner ports to junctions of $\Phi$.

Note that naturality of $\gd^\gamma$ says that \eqref{eq:comp_system} and \eqref{eq:distributed-system} are \emph{extensionally equal}, meaning they produce the same output given the same input. This also implies that the trajectories of these systems from a given initial condition will be the same. However, these systems have very different \emph{computational} properties.
Specifically, unlike the system in \eqref{eq:comp_system}, the system in \eqref{eq:distributed-system} can be run using the distributed message passing semantics discussed in Remark \ref{rem:message_passing}. Explicitly, the system \eqref{eq:distributed-system} yields Algorithm \ref{alg:dgd}, which is a simple distributed gradient method for minimizing \eqref{eq:simple_ex}.
\SetKwComment{Comment}{/* }{ */}

\begin{algorithm}\label{alg:dgd}
    \caption{Message Passing Gradient Descent}
    \KwIn{An initial state vector $s_0\in\R^5$.}
    $s\gets s_0$\;
    \While{a stopping criterion is not reached}{
        $t \gets K*s$ \Comment*[r]{Duplicate shared states}
        $t_1,t_2,t_3\gets \pi_1(t),\pi_2(t),\pi_3(t)$ \Comment*[r]{Project subvectors to subsystems}
        $t_1, t_2, t_3\gets \nabla f(t_1),\nabla g(t_2),\nabla h(t_3)$ \Comment*[r]{This can be done in parallel}
        $t\gets \texttt{concatenate}(t_1,t_2, t_3)$\;
        $\Delta s\gets \gamma*K^T*t$ \Comment*[r]{Sum changes in shared states}
        $s\gets s - \Delta s$ \Comment*[r]{Update state}
    }
    \Return{s}\;
\end{algorithm}

\end{example}

The process described in Example \ref{ex:dgd} of taking a UWD and component subproblems and generating a distributed solution algorithm for solving them can be automated by the meta-algorithm shown in Algorithm \ref{alg:meta}. 

\begin{algorithm}\label{alg:meta}
\caption{Minimization Algorithm Generator}
\KwIn{A UWD $\Phi$ with $n$ inner boxes}
\KwIn{Open $C^1$ objectives $f_1,\dots,f_n$ to fill each inner box of $\Phi$.}
\KwIn{A positive real step-size $\gamma$.}
\KwOut{A message passing dynamical system for minimizing $\Opt(\Phi)(f_1,\dots,f_n)$.}

\For{$i\gets1$ \KwTo $n$}{
    $v_i\gets \gd^\gamma(f_i)$\;
    }

\Return{$\Dynam_D(\Phi)(v_1,\dots,v_n)$}\;

\end{algorithm}

\begin{example}[Recovering Primal Decomposition]\label{ex:primal_decomp}
    
\begin{figure}
    \centering
    \includegraphics[width=\textwidth]{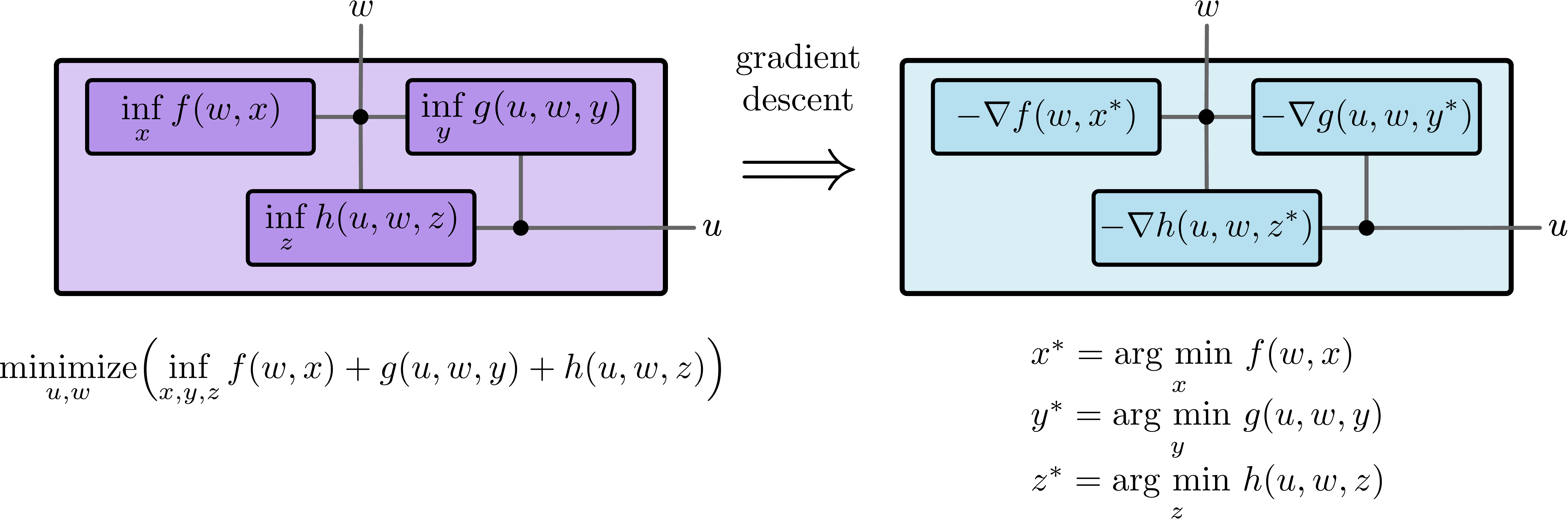}
    \caption{The $\Opt$-UWD setting up primal decomposition of Problem \eqref{eq:simple_ex}.}
    \label{fig:primal-decomp}
\end{figure}

Example \ref{ex:dgd} shows that applying the natural transformation $\gd^\gamma\maps \Opt\Rightarrow\Dynam$ gives a simple distributed gradient method that decomposes $C^1$ problems defined on arbitrary UWDs.
However, this simple distributed gradient method still requires synchronization after each gradient step.
Supposing that $f,g,$ and $h$ are $C^1$, strictly convex functions, an alternative distributed solution method for \eqref{eq:simple_ex} can be obtained by applying primal decomposition. At each iteration, primal decomposition fixes the values of shared variables to decouple the problem, finds minimizers of the subproblems in parallel, and uses these minimizers to compute gradient updates for the shared variables. We can implement primal decomposition in this framework by applying the same gradient descent natural transformation to a slightly modified UWD and component problems that fill it. Specifically, we apply gradient flow to the $\Opt$-UWD shown in Figure \ref{fig:primal-decomp}.

Because $f,g,$ and $h$ are strictly convex, their infima are attained at unique points for any values of $w$ and $u$. Thus, the gradient of $\inf_x f(w,x)$ can be computed as $\nabla_w f(w,x^*)$ where $x^*=\argmin_x f(w,x)$. A similar computation produces gradients of the other subproblems with respect to $w$ and $u$. Applying $\gd^\gamma$ to each subproblem and composing these systems results in the composite system
\begin{equation}
    \begin{bmatrix}u_{k+1}\\w_{k+1}\end{bmatrix}=\begin{bmatrix}
        u_k\\w_k
    \end{bmatrix}-\gamma Q^T\begin{bmatrix}
        \nabla f(w_k,x^*) \\ \nabla g(u_k,w_k,y^*) \\ \nabla h(u_k,w_k,z^*)
    \end{bmatrix},
\end{equation}
where $x^*=\argmin_x f(w_k,x), y^*=\argmin_y g(u_k,w_k,y), z^*=\argmin_z h(u_k,w_k,z)$, and $Q^T$ is the matrix obtained by applying the collect algebra to the UWD in Figure \ref{fig:primal-decomp}. This system yields Algorithm \ref{alg:pd}.

\SetKwComment{Comment}{/* }{ */}
\begin{algorithm}\label{alg:pd}
    \caption{Message Passing Primal Decomposition}
    \KwIn{An initial state vector $s_0\in\R^2$.}
    $s\gets s_0$\;
    \While{a stopping criterion is not reached}{
        $t \gets Q*s$ \Comment*[r]{Duplicate shared states}
        $t_1,t_2,t_3\gets \pi_1(t),\pi_2(t),\pi_3(t)$ \Comment*[r]{Project subvectors to subsystems}
        Compute the following argmins in parallel using an appropriate algorithm\;
        $x^*\gets \argmin_x f(t_1,x)$\;
        $y^*\gets \argmin_y g(t_2,y)$\;
        $z^*\gets \argmin_z h(t_3,z)$\;
        $t_1, t_2, t_3\gets \nabla f(t_1,x^*),\nabla g(t_2,y^*),\nabla h(t_3,z^*)$ \Comment*[r]{This can be done in parallel}
        $t\gets \texttt{concatenate}(t_1,t_2, t_3)$\;
        $\Delta s\gets \gamma*Q^T*t$ \Comment*[r]{Sum changes in shared states}
        $s\gets s - \Delta s$ \Comment*[r]{Update state}
    }
    \Return{s}\;
\end{algorithm}

This is primal decomposition applied to Problem \eqref{eq:simple_ex}. Naturality of gradient flow automatically proves that running this algorithm will produce the same result as running the equivalent synchronous dynamical system obtained by applying $\gd^\gamma$ to the original composite problem. Note that although Algorithm \ref{alg:dgd} and Algorithm \ref{alg:pd} are different, they both the result from applying meta-Algorithm \ref{alg:meta} to different UWDs and component objectives.
\end{example}

\subsection{The Compositional Data Condition}

Often, an instance of a specific optimization problem is determined by some data. Examples include how a neural network architecture is incorporated into a machine learning objective or how a specific graph defines a network flow objective. In general, we can view such encodings of problem data into objective functions as families of functions $p_N\maps D(N)\to \Opt(t(N))$ indexed by finite sets $N$. We think of $D(N)$ as the set of all possible problem data defined on $N$ (e.g., the set of graphs with vertex set $N$, the set of neural networks with $|N|$ weights, etc.). The function $t$ is then a transformation of $N$ into a set of decision variables for a corresponding objective. Sometimes, the problem data itself can be composed. For example, graphs can be glued together along their vertices, and neural networks can share weights across different components (weight tying).  We refer to data of these kinds as \define{compositional data}. Our \emph{compositional data condition} essentially says that if the compositional structure of a problem's data is compatible with the compositional structure of objectives in $\Opt$, then the problem can be solved via a decomposition method. Formally, this is stated as follows.


\begin{definition}[Compositional Data Condition]\label{def:cdc}
    Let $t\maps (\FinSet,+)\to (\FinSet,+)$ and $D\maps (\FinSet,+)\to (\Set,\times)$ be  lax symmetric monoidal functors. Given a family of functions $p_N\maps D(N)\to \Opt(t(N))$ for $N\in \FinSet$, we say that $p$ satisfies the \define{compositional data condition} if the maps $p_N$ form the components of a monoidal natural transformation $p\maps D\Rightarrow \Opt\circ t$.
    
\end{definition}


The compositional data condition is a sufficient condition for when a problem defined by compositional data is decomposable. Specifically, when the condition is satisfied, we obtain a monoidal natural transformation from an algebra $D$ of data to the algebra of dynamical systems by post-composition, i.e.,
\[D\xRightarrow{p} \Opt\circ t\xRightarrow{\gd^\gamma * t} \Dynam_D \circ t.\]
This provides a transformation from problem data into systems for minimizing the objectives defined by this data. The benefit of this framing is that the systems produced by this composite will respect any hierarchical decomposition of the input data encodable as an undirected wiring diagram. As an example, we will see how the minimum cost network flow problem satisfies the compositional data condition in Section \ref{sec:netflow}.


\section{Uzawa's Algorithm is an Algebra Morphism}\label{sec:uzawa}

So far, we have shown that gradient descent is natural with respect to composition for unconstrained minimization problems. To extend this framework to equality constrained problems, we show that Uzawa's algorithm \cite{uzawa_1960} is natural for equality constrained minimization problems. This result will also allow us to extend the compositional data condition to a broader class of problems. Doing so first requires a notion of composition for constrained problems. Recall that Uzawa's algorithm uses saddle functions as representations of constrained problems. Therefore, we specify how to compose constrained problems by specifying how to compose their corresponding saddle functions.

\subsection{Composing Saddle Problems}

Saddle functions should compose like unconstrained objective functions do under $\Opt$, namely by summing objectives subject to a pattern of variable sharing dictated by a UWD. The crucial difference with saddle problems is that we need to guarantee that the composite of saddle problems is again a saddle problem. One way to achieve this is to enforce that the pattern of variable sharing must respect the saddle properties of the subproblems being composed. Specifically, we only want to allow saddle problems to share a given variable if they are either both convex in that variable or both concave in it. 
If one is convex in that variable and the other is concave, then they should not be allowed to share that variable.

Therefore, we need a way to track whether a given saddle function depends in a convex or concave way upon each of its inputs. For this, we make use of a categorical construction known as the \emph{slice category}, which can be thought of as imposing a ``type system" on a category.

\begin{definition}
    Given a category $\CC$ and an object $c\in\CC$, the \define{slice category} $\CC/c$ is defined as follows.
    \begin{itemize}
        \item The objects of $\CC/c$ are morphisms $f\maps x\to c$ in $\CC$ with codomain $c$.
        \item A morphism from $f\maps x\to c$ to $g\maps y\to c$ in $\CC/c$ is a morphism $h\maps x\to y$ in $\CC$ such that the triangle
\[\begin{tikzcd}
	x && y \\
	& c
	\arrow["f"', from=1-1, to=2-2]
	\arrow["g", from=1-3, to=2-2]
	\arrow["h", from=1-1, to=1-3]
\end{tikzcd}\]
    commutes, i.e., $f=g\circ h$. 
    \end{itemize}
    A slice category $\CC/c$ has an associated \emph{forgetful functor} $U\maps \CC/c\to\CC$ defined by taking objects $x\xrightarrow{f}c$ in $\CC/c$ to their domains $x\in\CC$, and by taking commuting triangles to the morphisms which made them commute.

\end{definition}

The distinction between convex and concave is binary, 
so the slice category we use to encode the distinction is $\FinSet/[2]$. We can think of this category as essentially the same as $\FinSet$, but with each element of a set being ``labelled" with either a 1 or a 2, representing convex-labelled and concave-labelled components respectively. We note that linear functions are both convex and concave and in principle they
can be labeled with either. However, in the engineering literature, one conventionally regards a linear function as convex
when it is being minimized and concave when it is being maximized, and we adopt this convention here as well.
To produce an algebra of saddle functions, we will require that an objective function defined on such a labelled set be convex in the convex-labelled components and concave in the concave-labelled components. 

The morphisms in $\FinSet/[2]$ can then be thought of as label-preserving functions, i.e., functions which map convex-labelled elements of their domain set to convex-labelled elements of their codomain set and likewise for concave-labelled elements. Note that colimits in a slice category are constructed as colimits in the original category (\cite{riehl_context_2017}, Proposition 3.3.8). In particular, the coproduct of two objects $N\xrightarrow{\tau} [2]$ and $M\xrightarrow{\sigma} [2]$ in $\FinSet/[2]$ is the coproduct $N+M$ in $\FinSet$ labelled by the copairing $[\tau,\sigma]$. Therefore, $(\FinSet/[2],+,\emptyset\xrightarrow{!}[2])$ inherits a co-Cartesian monoidal structure from that of $(\FinSet, +, \emptyset)$.

\begin{remark}\label{rem:typing}
Given an object $\tau\maps N\to [2]$ in $\FinSet/[2]$, the sets $N_1\coloneqq \tau^{-1}(1)$ and $N_2\coloneqq \tau^{-1}(2)$ form a partition of $N$. 
We also have two unique injections $N_1\xhookrightarrow{\textnormal{cnvx}} N$ defined by $(\tau\circ \textnormal{cnvx})(n_1) = 1$ for all $n_1\in N_1$ and $N_2\xhookrightarrow{\textnormal{cncv}} N$ defined by $(\tau\circ \textnormal{cncv})(n_2) = 2$ for all $n_2\in N_2$. In other words, `cnvx' picks out the convex-labelled components of $N$ and `cncv' picks out the concave-labelled components.

Furthermore, utilizing the functor $(-)_*$ to generate $\R^{N_1}$ and $\R^{N_2}$ and taking the pushforward of `cnvx' and `cncv', we define the isomorphism $u\maps \R^{N_1}\oplus\R^{N_2}\cong \R^N$ as the unique arrow in the following coproduct diagram:
\[\begin{tikzcd}
	& {\R^N} \\
	\\
	{\R^{N_1}} & {\R^{N_1}\oplus\R^{N_2}} & {\R^{N_2}}
	\arrow["u"', dashed, hook, two heads, from=3-2, to=1-2]
	\arrow["{\iota_{\R^{N_1}}}"', from=3-1, to=3-2]
	\arrow["{\iota_{\R^{N_2}}}", from=3-3, to=3-2]
	\arrow["{\textnormal{cnvx}_*}", from=3-1, to=1-2]
	\arrow["{\textnormal{cncv}_*}"', from=3-3, to=1-2]
\end{tikzcd}\]
    This isomorphism simply takes a pair $(x,y)\in\R^{N_1}\oplus\R^{N_2}$ and arranges them into a single vector $z\in\R^N$ whose ordering of convex and concave components is given by $\tau$.
\end{remark}



\begin{restatable}[]{lemma}{saddleFunctor}
    There is a finset algebra $\Saddle\maps (\FinSet/[2],+)\to(\Set,\times)$ defined as follows.
    \begin{itemize}
        \item Given an object $\tau\maps N\to [2]$ in $\FinSet/[2]$, let $N_1,N_2$ and $u\maps \R^{N_1}\oplus\R^{N_2}\cong \R^N$ be defined as in Remark \ref{rem:typing}. Then $\Saddle$ takes $\tau$ to the set
        \begin{multline}
            \Big\{f\maps \R^N\to \R \mid f(u(\cdot,y)) \text{ is convex for any fixed } y\in\R^{N_2}\text{ } \textnormal{ and } \\ f(u(x,\cdot)) \text{ is concave for any fixed } x\in \R^{N_1}\Big\}.
            \end{multline}
        
        \item On morphisms, $\Saddle$ takes a commuting triangle 
\[\begin{tikzcd}[ampersand replacement=\&]
	N \&\& M \\
	\& {[2]}
	\arrow["\phi", from=1-1, to=1-3]
	\arrow[""{name=0, anchor=center, inner sep=0}, "{\tau_1}"', from=1-1, to=2-2]
	\arrow[""{name=1, anchor=center, inner sep=0}, "{\tau_2}", from=1-3, to=2-2]
\end{tikzcd}\]
        
    to the function $\Saddle(\tau_1)\to\Saddle(\tau_2)$ defined by precomposition: $f\mapsto f\circ \phi^*$. 
    \item Given objects $\tau_1\maps N\to [2]$ and $\tau_2\maps M\to [2]$ in $\FinSet/[2]$, the product comparison $\varphi_{\tau_1,\tau_2}\maps\Saddle(\tau_1)\times\Saddle(\tau_2)\to\Saddle(\tau_1+\tau_2)$ is defined by
    \begin{equation}
        \varphi_{\tau_1,\tau_2}(f,g)(z)\coloneqq f(\iota_N^*(z)) + g(\iota_M^*(z)),
    \end{equation}
    for all $z\in \R^{N+M}$.
    \item The unit comparison $\varphi_0\maps \1\to \Saddle(\emptyset\xrightarrow{!}[2])$ is given by the constant 0 function.
    \end{itemize}
\end{restatable}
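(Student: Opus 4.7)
The plan is to observe that the formulas defining the morphism action, product comparison, and unit of $\Saddle$ are precisely those of $\Opt$ composed with the forgetful functor $U\maps\FinSet/[2]\to\FinSet$. Consequently, functoriality (beyond well-definedness), naturality of the product comparison, and the monoidal coherence axioms (associativity, symmetry, unitality) all transfer directly from the corresponding arguments in the proof that $\Opt$ is a finset algebra. The substantive content is then to verify that the saddle property is preserved under (i) the morphism action $f\mapsto f\circ\phi^*$, (ii) the product comparison $(f,g)\mapsto f\circ\iota_N^* + g\circ\iota_M^*$, and (iii) the unit comparison.

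For (i), a morphism $\phi\maps(N,\tau_1)\to(M,\tau_2)$ in $\FinSet/[2]$ satisfies $\tau_1 = \tau_2\circ\phi$, so $\phi$ restricts to maps $\phi_1\maps N_1\to M_1$ and $\phi_2\maps N_2\to M_2$ between the convex- and concave-labeled parts. Applying the contravariant functor $(-)^*$ to the resulting commuting squares yields the compatibility identity
\begin{equation*}
    \phi^*\circ u_M = u_N\circ(\phi_1^*\oplus\phi_2^*),
\end{equation*}
where $u_N, u_M$ are the isomorphisms from Remark \ref{rem:typing}. For $f\in\Saddle(\tau_1)$ and fixed $y'\in\R^{M_2}$, the map $x'\mapsto(f\circ\phi^*)(u_M(x',y'))$ therefore equals $f(u_N(\phi_1^*(x'),\phi_2^*(y')))$, which is the composition of the convex function $f(u_N(\cdot,\phi_2^*(y')))$ with the linear map $\phi_1^*$, hence convex in $x'$. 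Concavity in $\R^{M_2}$ for any fixed value in $\R^{M_1}$ follows by a symmetric argument, so $f\circ\phi^*\in\Saddle(\tau_2)$.

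For (ii), the coproduct labeling satisfies $(N+M)_1 = N_1+M_1$ and $(N+M)_2 = N_2+M_2$, and a parallel diagram chase shows that $\iota_N^*\circ u_{N+M}((x_N,x_M),(y_N,y_M)) = u_N(x_N,y_N)$, with a symmetric identity for $\iota_M^*$. Hence
\begin{equation*}
    \varphi_{\tau_1,\tau_2}(f,g)\bigl(u_{N+M}((x_N,x_M),(y_N,y_M))\bigr) = f(u_N(x_N,y_N)) + g(u_M(x_M,y_M)).
\end{equation*}
For any fixed $(y_N,y_M)$, each summand is convex in its own block of convex-labeled variables and constant in the other, so the sum is convex in $(x_N,x_M)$; concavity in the concave block follows by symmetry. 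Requirement (iii) is immediate since the constant zero function is both convex and concave. The principal obstacle is the careful bookkeeping of the isomorphisms $u_{-}$ of Remark \ref{rem:typing}; once their compatibility with the pullback and the coproduct inclusions is in hand, the saddle-preservation arguments reduce to the standard facts that composition of a convex function with a linear map is convex and that a sum of convex functions on disjoint variable blocks is convex.
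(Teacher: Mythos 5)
Your proposal is correct and follows essentially the same route as the paper: transfer functoriality, naturality of the comparisons, and the coherence axioms from the $\Opt$ proof (since $\Saddle(\phi)=\Opt(\phi)$ and the comparison maps coincide), then verify that the saddle property and labelling are preserved by the morphism action, product comparison, and unit. The only difference is cosmetic: where the paper cites the fact that precomposing a saddle function with a linear map yields a saddle function and argues label-preservation from the commuting triangle, you make this self-contained via the identity $\phi^*\circ u_M = u_N\circ(\phi_1^*\oplus\phi_2^*)$, which is a valid and slightly more explicit version of the same argument.
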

\begin{proof}
    First, we must prove that given a saddle function $f\in\Saddle(N\xrightarrow{\tau} [2])$ and a map $\phi\maps (N\xrightarrow{\tau}[2])\to (M\xrightarrow{\sigma}[2])$, the composition~$f\circ\phi^*$ is again a saddle function whose convex and concave components respect $\sigma$. The precomposition of a saddle function with a linear map is again a saddle function (\cite{schiele_disciplined_2023}, \S 2.1) so $f\circ \phi^*$ is a saddle function because $f$ is a saddle function and $\phi^*$ is linear. 
    Furthermore, since $\phi$ makes the diagram
\[\begin{tikzcd}[ampersand replacement=\&]
	N \&\& M \\
	\& {[2]}
	\arrow["\phi", from=1-1, to=1-3]
	\arrow[""{name=0, anchor=center, inner sep=0}, "{\tau}"', from=1-1, to=2-2]
	\arrow[""{name=1, anchor=center, inner sep=0}, "{\sigma}", from=1-3, to=2-2]
\end{tikzcd}\]
    commute, $\phi^*$ must map convex labelled components of $x\in\R^M$ to convex labelled componenets in $\R^N$ and likewise for concave labelled components. Thus $f\circ \phi^*$ has convex and concave components that respect $\sigma$. With this verified, the proof of functoriality of $\Saddle$ is the same as the proof of functoriality of $\Opt$ because $\Saddle(\phi) = \Opt(\phi)$.

    Similarly, because the comparison maps of $\Saddle$ act the same as those of $\Opt$, the proofs of the necessary unitality, symmetry, and associativity diagrams carry over. We then just need to verify that the results of applying the comparison maps are again saddle functions with the correct labelling of convex and concave components. For the unit comparison, the constant 0 function is affine and thus trivially a saddle function. Furthermore, it has no arguments and thus no labelling to respect.

    For the product comparison, consider arbitrary objects $N\xrightarrow{\tau_1} [2], M\xrightarrow{\tau_2} [2]$ in $\FinSet/[2]$ and arbitrary saddle functions $f\in\Saddle(\tau_1)$ and $g\in \Saddle(\tau_2)$. Let $N_1, N_2, M_1, M_2, u_N\maps \R^{N_1}\oplus \R^{N_2}\cong \R^N$, and $u_M\maps\R^{M_1}\oplus\R^{M_2}\cong\R^M$ be defined as in Remark \ref{rem:typing}. Then we know that 
    \begin{equation}
        f(u_N(\cdot, y)) + g(u_M(\cdot, z))
    \end{equation}
    is convex for all $(y,z)\in\R^{N_2}\oplus \R^{M_2}$ because the sum of convex functions is convex. Likewise, we have that
    \begin{equation}
        f(u_N(x,\cdot)) + g(u_M(w,\cdot))
    \end{equation}
    is concave for all $(x,w)\in \R^{N_1}\oplus \R^{M_1}$ because the sum of concave functions is concave. This establishes the desired saddle property for the result of applying the product comparison maps.

\end{proof}

\subsection{Additional Classes of Optimization Problems}


The functor $\Opt$ gives a way to compose unconstrained $C^1$ non-convex minimization problems while $\Saddle$ gives a way to compose saddle problems which are not necessarily differentiable. By adding additional restrictions on the sets of functions in the image of these functors or restricting their domains, we can easily produce new functors for composing other types of problems. In this paper specifically, we will make use of the following restrictions: 
\begin{itemize}
    \item $\DiffSaddle$ restricts functions in the image of $\Saddle$ to be continuously differentiable, i.e., \[\DiffSaddle(N)\coloneqq \{L\in\Saddle(N)\mid L\in C^1\}.\]
    \item We get a subcategory of $\FinSet/[2]$ isomorphic to $\FinSet$ by only looking at objects of the form $\tau\maps N\to [2]$ for which $\tau(n)=1$ for all $n\in N$. Restricting $\Saddle$ to this subcategory gives us a finset algebra $\Conv\maps\FinSet\to\Set$ defining composition of convex optimization problems. Similarly, restricting $\Saddle$ to the subcategory for which $\tau(n)=2$ for all $n\in N$ gives us a finset algebra $\Conc\maps\FinSet\to\Set$ defining composition of concave optimization problems.
    \item Finally, we can perform the same restrictions as above to $\DiffSaddle$ to yield finset algebras 
    \[\DiffConv\maps \FinSet\to\Set \text{ and } \DiffConc\maps\FinSet\to\Set\] for composing continuously differentiable convex and concave problems, respectively.
\end{itemize}
Figure \ref{fig:hierarchy} in the Introduction shows a graphical view of the relationship between these various restrictions.

\subsection{Solving Composite Saddle Problems with Uzawa's Algorithm}

We can now show that Uzawa's algorithm, i.e., gradient descent on convex components and gradient ascent on concave components, respects the compositional structure of saddle problems. This will allow us to derive distributed versions of both Uzawa's algorithm  
and dual decomposition.

\begin{restatable}{theorem}{saddle_flow}
    Let $U\maps \FinSet/[2]\to\FinSet$ be the forgetful functor from the slice category. Then there is a monoidal natural transformation $\flow\maps\DiffSaddle\Rightarrow\Dynam\circ U$ with components
    \begin{equation}
        \flow_{N\xrightarrow{\tau}[2]}\maps \DiffSaddle(\tau)\to \Dynam(N)
    \end{equation}
    defined component-wise by
    \begin{equation}
        \flow_\tau(f)(x)_i\coloneqq \begin{cases}
            -\nabla f(x)_i & \tau(i) = 1 \\
            \nabla f(x)_i & \tau(i) = 2
        \end{cases}
    \end{equation}
    for all $x\in\R^N$ and $i\in N$.
\end{restatable}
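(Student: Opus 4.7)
The plan is to reduce this theorem to Theorem 3.2 (gradient flow is natural for $\Opt$) by algebraizing the sign-switching behavior. Using the partition $N = N_1 + N_2$ from Remark 4.5, define the diagonal linear involution $D_\tau\maps \R^N\to\R^N$ by $D_\tau(x)_i = -x_i$ if $\tau(i)=1$ and $D_\tau(x)_i = x_i$ if $\tau(i)=2$. Then componentwise unpacking gives the compact form
\begin{equation*}
\flow_\tau(f) = D_\tau\circ\nabla f,
\end{equation*}
which separates the ``dynamics part'' ($\nabla f$) from the ``sign part'' ($D_\tau$). The key observation is that a morphism $\phi\maps(\tau_1\maps N\to[2])\to(\tau_2\maps M\to[2])$ in $\FinSet/[2]$ satisfies $\tau_1 = \tau_2\circ\phi$, so $\phi$ maps convex-labelled indices to convex-labelled indices and likewise for concave. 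This forces the intertwining relations $D_{\tau_1}\phi^* = \phi^* D_{\tau_2}$ and, dually, $\phi_* D_{\tau_1} = D_{\tau_2}\phi_*$. Similarly, the coproduct inclusions $\iota_N\maps N\hookrightarrow N+M$ and $\iota_M\maps M\hookrightarrow N+M$ preserve typing with respect to the copaired label $[\tau_1,\tau_2]$, giving $D_{\tau_1+\tau_2}\iota_{N*} = \iota_{N*}D_{\tau_1}$ and similarly for $M$.

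For naturality, I would chase an arbitrary $f\in\DiffSaddle(\tau_1)$ around the square over $U(\phi) = \phi\maps N\to M$. The top-then-right composite sends $f$ to $D_{\tau_2}\nabla(f\circ\phi^*)$, which by the chain rule equals $D_{\tau_2}\phi_*\nabla f(\phi^*(x))$ (using that $\mathcal{M}(\phi^*)^T = \mathcal{M}(\phi_*)$, exactly as in Theorem 3.2). The left-then-bottom composite sends $f$ to $\phi_*(D_{\tau_1}\nabla f(\phi^*(x)))$. The intertwining relation $\phi_*D_{\tau_1}=D_{\tau_2}\phi_*$ immediately equates these, so naturality is inherited from the $\Opt$ case up to this sign bookkeeping.

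For monoidality of $(\flow,\ldots)$ as a transformation $\DiffSaddle \Rightarrow \Dynam\circ U$, I would verify the unit and product squares in turn. The unit square commutes trivially since the constant zero function is sent to the unique element of $\Dynam(\emptyset)$. For the product square, given $f\in\DiffSaddle(\tau_1)$ and $g\in\DiffSaddle(\tau_2)$, the clockwise path yields $z\mapsto \iota_{N*}D_{\tau_1}\nabla f(\iota_N^*(z)) + \iota_{M*}D_{\tau_2}\nabla g(\iota_M^*(z))$, while the counterclockwise path yields $z\mapsto D_{\tau_1+\tau_2}\bigl(\iota_{N*}\nabla f(\iota_N^*(z)) + \iota_{M*}\nabla g(\iota_M^*(z))\bigr)$ after distributing the gradient across the sum and applying the chain rule (both steps identical to the monoidality check in Theorem 3.2). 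The intertwining $D_{\tau_1+\tau_2}\iota_{N*}=\iota_{N*}D_{\tau_1}$ (and its $M$-analogue) closes the square.

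The main obstacle is really a conceptual rather than a computational one: making the sign-swap operator $D_\tau$ behave functorially, i.e.\ establishing that the slice-category constraint $\tau_1 = \tau_2\circ\phi$ is exactly what is needed for $D_\tau$ to commute with pushforwards, pullbacks, and coproduct inclusions. Once this intertwining lemma is spelled out, the rest of the proof is a mechanical rerun of the $\Opt$ proof with $\nabla f$ replaced by $D_\tau\nabla f$ throughout. This is also the point at which the design decision to work over $\FinSet/[2]$ rather than $\FinSet$ pays off: if we had allowed typing-violating morphisms, the intertwining would fail and naturality of Uzawa would break exactly on the attempt to share a variable that one subproblem treats convexly and another treats concavely.
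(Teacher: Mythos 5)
Your proposal is correct and follows essentially the same route as the paper's proof: both reduce the computation to the chain-rule/pushforward–pullback argument of the gradient-flow theorem and observe that the slice condition $\tau = \sigma\circ\phi$ is exactly what makes the signs agree. Your packaging of the sign bookkeeping as the involution $D_\tau$ with the intertwining relations $\phi_*D_{\tau_1}=D_{\tau_2}\phi_*$ and $D_{\tau_1}\phi^*=\phi^*D_{\tau_2}$ is just a cleaner operator-level restatement of the paper's componentwise check that $\tau(i)=\sigma(\phi(i))$, not a different argument.
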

\begin{proof}
    To verify naturality, we need to show that the diagram 
\begin{equation}\label{eq:comm_tri}\begin{tikzcd}
	{\DiffSaddle(\tau)} && {\DiffSaddle(\sigma)} \\
	{\Dynam(N)} && {\Dynam(M)}
	\arrow["{\DiffSaddle(\phi)}", from=1-1, to=1-3]
	\arrow["{\Dynam(\phi)}"', from=2-1, to=2-3]
	\arrow["{\flow_\tau}"', from=1-1, to=2-1]
	\arrow["{\flow_\sigma}", from=1-3, to=2-3]
\end{tikzcd}\end{equation}
    commutes for all $N\xrightarrow{\tau}[2]$, $M\xrightarrow{\sigma}[2]$, and commuting triangles 
\[\begin{tikzcd}[ampersand replacement=\&]
	N \&\& {M.} \\
	\& {[2]}
	\arrow["\phi", from=1-1, to=1-3]
	\arrow[""{name=0, anchor=center, inner sep=0}, "{\tau}"', from=1-1, to=2-2]
	\arrow[""{name=1, anchor=center, inner sep=0}, "{\sigma}", from=1-3, to=2-2]
\end{tikzcd}\]
    Conceptually, this proof will be similar to the proof of naturality in Theorem \ref{thm:flow}, with the crucial difference being that we also need to verify that $\Dynam(\phi)\circ\flow_\tau$ performs gradient descent on the same components as $\flow_\sigma\circ\DiffSaddle(\phi)$ and likewise for gradient ascent.

    To begin, let $f\in\DiffSaddle(\tau)$ be arbitrary. Following the bottom path yields the following dynamical system defined component-wise:
    \begin{equation}\label{eq:saddle_top}
        \Dynam(\phi)(\flow_\tau(f))(y)_j = \sum_{i\in\phi^{-1}(j)}\begin{cases}
            -[\nabla f(\phi^*(y))]_i & \tau(i)=1 \\
            [\nabla f(\phi^*(y))]_i & \tau(i)=2,
        \end{cases}
    \end{equation}
    for all $y\in\R^M$ and $j\in M$. The summation in \eqref{eq:saddle_top} comes from the component-wise definition of the pushforward. Likewise, following the bottom path yields the system
    \begin{equation}\label{eq:saddle_bot}
        \flow_\sigma(f\circ\phi^*)(y)_j = \begin{cases}
            -\nabla[f(\phi^*(y))]_j & \sigma(j) = 1 \\
            \nabla[f(\phi^*(y))]_j & \sigma(j) =2,
        \end{cases}
    \end{equation}
    for all $y\in\R^M$ and $j\in M$. By applying the chain rule and noting that the pushforward is dual to the pullback (same reasoning as in Theorem \ref{thm:flow}, we have that \eqref{eq:saddle_bot} equals
    \begin{equation}
        \sum_{i\in\phi^{-1}(j)}\begin{cases}
            -[\nabla f(\phi^*(y))]_i & \sigma(\phi(i))=1 \\
            [\nabla f(\phi^*(y))]_i & \sigma(\phi(i))=2.
        \end{cases}
    \end{equation}
    Crucially, because \eqref{eq:comm_tri} is a commuting triangle, we have that $\tau(i) = \sigma(\phi(i))$ for all $i\in N$. This shows that \eqref{eq:saddle_top} equals \eqref{eq:saddle_bot} as desired.

    To prove that $\flow$ is monoidal, it again suffices to combine the above reasoning with that of Theorem \ref{thm:flow}.
\end{proof}

\begin{corollary}\label{thm:uzawa}
    Gradient ascent-descent gives a monoidal natural transformation $\gad^\gamma\maps \DiffSaddle\Rightarrow \Dynam_D\circ U$ with components
    \begin{equation}
        \gad^\gamma_{N\xrightarrow{\tau}[2]}\maps \DiffSaddle(\tau)\to\Dynam_D(N)
    \end{equation}
    defined by
    \begin{equation}
        \gad^\gamma_\tau(f)(x)_i\coloneqq \begin{cases}
            x_i-\gamma\nabla f(x)_i & \tau(i) = 1 \\
            x_i+\gamma\nabla f(x)_i & \tau(i) = 2
        \end{cases}
    \end{equation}
    for all $x\in\R^N$ and $i\in N$.
\end{corollary}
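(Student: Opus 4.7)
The plan is to obtain $\gad^\gamma$ as the composite of two monoidal natural transformations already at our disposal: the saddle flow transformation $\flow : \DiffSaddle \Rightarrow \Dynam \circ U$ just established, and the Euler's-method transformation $\Euler^\gamma : \Dynam \Rightarrow \Dynam_D$ recalled at the end of Section \ref{sec:operads}. Since $\FinSet/[2]$ inherits its symmetric monoidal structure from $\FinSet$ via coproducts in the slice, the forgetful functor $U : \FinSet/[2] \to \FinSet$ is strict symmetric monoidal, so we may whisker $\Euler^\gamma$ on the right by $U$ to obtain a monoidal natural transformation $\Euler^\gamma \ast U : \Dynam \circ U \Rightarrow \Dynam_D \circ U$. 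Defining
\begin{equation}
    \gad^\gamma \coloneqq (\Euler^\gamma \ast U) \circ \flow,
\end{equation}
we immediately get a monoidal natural transformation $\DiffSaddle \Rightarrow \Dynam_D \circ U$, since vertical composition of monoidal natural transformations is again a monoidal natural transformation.

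It then remains only to verify that the components of this composite agree with the formula stated in the corollary. Fix an object $\tau\maps N \to [2]$ and a saddle function $f \in \DiffSaddle(\tau)$. By construction the component of $\Euler^\gamma \ast U$ at $\tau$ is the component of $\Euler^\gamma$ at $N = U(\tau)$, namely $\upsilon \mapsto \id_{\R^N} + \gamma \upsilon$. Composing with $\flow_\tau$ and reading off the $i$-th coordinate gives
\begin{equation}
    \gad^\gamma_\tau(f)(x)_i = x_i + \gamma\,\flow_\tau(f)(x)_i = \begin{cases} x_i - \gamma\,\nabla f(x)_i & \tau(i) = 1, \\ x_i + \gamma\,\nabla f(x)_i & \tau(i) = 2,\end{cases}
\end{equation}
for all $x \in \R^N$ and $i \in N$, matching the stated formula.

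I do not expect a genuine obstacle here; the only subtlety worth checking is that the right-whiskering is legal, which reduces to confirming that $U$ is symmetric monoidal on the nose. This is immediate from the description of the coproduct in $\FinSet/[2]$ as $\tau_1 + \tau_2 = [\tau_1,\tau_2] : N + M \to [2]$, whose underlying finite set is $N + M$. Hence no new naturality or compatibility computation is needed beyond those already performed for $\flow$ and $\Euler^\gamma$, and the corollary follows.
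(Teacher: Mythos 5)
Your proposal is correct and follows the paper's own route: the paper likewise obtains $\gad^\gamma$ by composing $\flow\maps\DiffSaddle\Rightarrow\Dynam\circ U$ with $\Euler^\gamma$ (implicitly whiskered by $U$). Your added checks — that $U$ is strict symmetric monoidal so the whiskering $\Euler^\gamma\ast U$ is legitimate, and that the composite's components match the stated coordinate formula — are exactly the details the paper leaves tacit, so no gap remains.
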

\begin{proof}
    This follows by composing $\flow\maps \DiffSaddle\Rightarrow\Dynam\circ U$ with $\Euler^\gamma$.
\end{proof}

Unpacking the definition of this natural transformation, it is simply implementing gradient \emph{descent} on the convex components of the objective and gradient \emph{ascent} on the concave components. With this proven, we can extend the compositional data condition to the case when the objectives defined by data are in $\DiffSaddle$, i.e., when $D, p,$ and $t$ are chosen such that
\[D\xRightarrow{p}\DiffSaddle\circ t\xRightarrow{\gad^\gamma * t}\Dynam_D\circ U \circ t\]
is a monoidal natural transformation.

\begin{example}[Distributed Uzawa's Algorithm and Dual Decomposition]\label{ex:dual-decomp}
    Similar to how applying $\gd^\gamma$ to $\Opt$-UWDs gave us distributed gradient and primal decomposition algorithms in Examples \ref{ex:dgd} and \ref{ex:primal_decomp} respectively, we can apply $\gad^\gamma$ to $\DiffSaddle$-UWDs to obtain distributed primal-dual and dual decomposition algorithms. 

    \begin{figure}
        \centering
        \includegraphics[width=\textwidth]{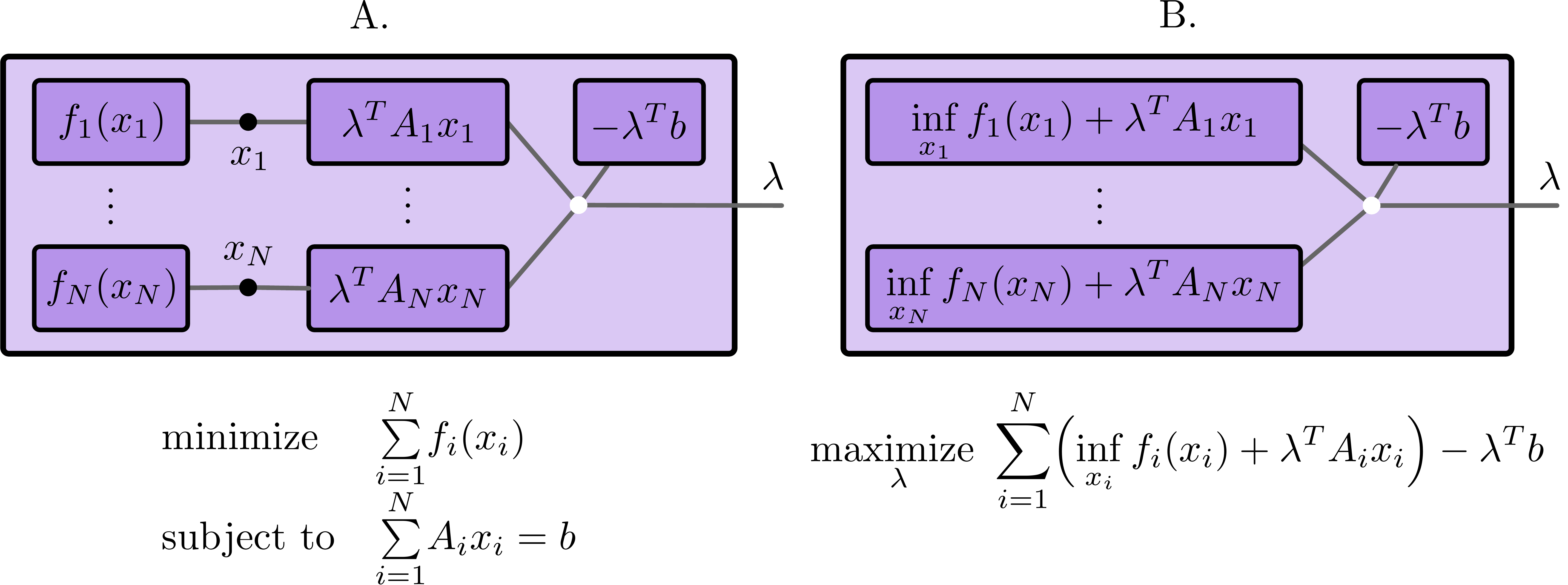}
        \caption{A. A $\DiffSaddle$-UWD encoding the structure of a separable sum of objective functions subject to a complicating equality constraint. B. A $\DiffSaddle$-UWD setting up dual decomposition of the problem encoded in (A). Since these are $\DiffSaddle$-UWDs, we denote the convex-labelled variables with black junctions and the concave labelled variables with white junctions.}
        \label{fig:dual-decomp}
    \end{figure}
    
    A common scenario, which will be revisited in our network flow example in Section~\ref{sec:netflow}, 
    is having a separable sum of objective functions subject to some complicating equality constraint. Following the discussion in Section 2.2 of \cite{boyd_distributed_2010}, a general example is the problem
    \begin{equation}\label{eq:cons-prob}
        \begin{array}{ll@{}ll}
            \textnormal{minimize } & \sum_{i=1}^N f_i(x_i) \\
            \textnormal{subject to } & Ax=b,
        \end{array}
    \end{equation}
    where the decision variable $x\in\R^n$ is partitioned into subvectors $x = [x_1,\dots,x_N]^T$ where $x_i\in\R^{n_i}$ and $\sum_i n_i=n$. Here, $A\in \R^{m\times n}, b\in\R^m$, and we assume that all the $f_i\maps\R^{n_i}\to \R$ are strictly convex. We can then partition $A$ conformably to match the partition of $x$ by using $A=[A_1,\dots,A_N]$ such that
    \begin{equation}
        Ax = \sum_{i=1}^N A_ix_i,
    \end{equation}
    where $A_i\in\R^{m\times n_i}$.
    With this partitioning, the Lagrangian is also separable in $x$ and can be written as
    \begin{equation}
        L(x,\lambda) = \sum_{i=1}^N \Bigl(f_i(x_i) + \lambda^TA_ix_i\Bigr) - \lambda^Tb.
    \end{equation}
    The separable structure of $L$ can then be encoded as the $\DiffSaddle$-UWD in Figure \ref{fig:dual-decomp}.A. Now, applying $\gad^\gamma$ to this UWD yields a distributed primal-dual solution algorithm which requires synchronization after each gradient step. Specifically, at every iteration, the gradient with respect to $\lambda$ and the $x_i$'s are computed locally for each box. These local gradients are then collected and summed along junctions to produce a gradient update for the total system.
    Like Example \ref{ex:primal_decomp} which modified a distributed gradient descent UWD to a primal decomposition UWD, we can perform a similar modification to convert our distributed primal-dual UWD to a dual decomposition UWD. For this, we take the dual function 
    \begin{equation}
        q(\lambda)\coloneqq \inf_x L(x,\lambda) = \sum_{i=1}^N\Bigl(\inf_{x_i}f_i(x_i) + \lambda^TA_ix_i\Bigr) - \lambda^Tb,
    \end{equation}
    which is again separable in $x$. The dual function can then be encoded as the $\DiffSaddle$-UWD shown in Figure \ref{fig:dual-decomp}.B. Applying $\gad^\gamma$ to this UWD yields the standard dual decomposition algorithm for solving \eqref{eq:cons-prob}.
\end{example}

    

\section{Subgradient Descent as a Functor}\label{sec:subg}
We now have algebra morphisms which provide distributed solution algorithms to compositions of unconstrained and equality constrained \emph{differentiable} problems. The goal of this section is to extend these results to their non-differentiable counterparts, a key aspect
of which is the generalization from gradients to subgradients. 
To incorporate subgradient methods into our framework, we need to show that they also provide an algebra morphism from a UWD-algebra of optimization problems to a UWD-algebra of dynamical systems. We can use the $\Conv$ UWD-algebra of convex non-differentiable problems as the domain of this algebra morphism. However, because the subgradient is a point-to-set mapping, $\Dynam$ will not suffice as the codomain. Thus, we first generalize $\Dynam$ and $\Dynam_D$ to algebras of non-deterministic continuous and discrete dynamical systems, which we call $\NDD$ and $\NDD_D$ respectively. We then show that Euler's method is still natural in this non-deterministic setting. Finally, this result allows us to prove that the subgradient method gives an algebra morphism from $\Conv$ to $\NDD_D$.

\subsection{Non-deterministic Dynamical Systems}
In general, a non-deterministic continuous dynamical system is given by a map $\upsilon\maps \R^N\to P\R^N$, where $P\R^N$ denotes the powerset of the set of vectors in $\R^N$. The map $\upsilon$ can be thought of as taking the current state to the set of possible update directions and is thus identified with the differential inclusion
\begin{equation}
    \dot{x}\in \upsilon(x).
\end{equation}

To construct an algebra for composing such non-deterministic systems, we first need to review the powerset functor, which lifts functions $X\to Y$ to functions $PX\to PY$.

\begin{lemma}\label{lem:pset_func}
    There is a functor $P\maps \Set\to\Set$ defined by the following maps:
    \begin{itemize}
        \item On objects, $P$ takes a set $X$ to its powerset $P(X)\coloneqq \{A : A\subseteq X\}$.
        \item Given a function $f\maps X\to Y$, the mapping $P(f)\maps P(X)\to P(Y)$ is the \emph{image} function
        \begin{equation}
            A\mapsto \bigcup_{a\in A} f(a)
        \end{equation}
        for $A\subseteq X$.
    \end{itemize}
\end{lemma}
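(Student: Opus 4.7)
The plan is to verify the two functor axioms: preservation of identities and preservation of composition. Before doing that, I would first clarify the intended meaning of the formula $\bigcup_{a \in A} f(a)$. Since $f(a) \in Y$ is an element rather than a subset, the formula should be read as the ordinary direct image $P(f)(A) = \{f(a) : a \in A\} \subseteq Y$, which is indeed an element of $P(Y)$. With this reading, the definition is well-posed on objects and morphisms.

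For identity preservation, I would simply compute: for any $A \in P(X)$,
\begin{equation*}
P(\id_X)(A) = \{\id_X(a) : a \in A\} = \{a : a \in A\} = A,
\end{equation*}
so $P(\id_X) = \id_{P(X)}$ as functions $P(X) \to P(X)$.

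For composition preservation, given $f\maps X \to Y$ and $g\maps Y \to Z$ and any $A \in P(X)$, I would expand both sides and rewrite the double image as a single image:
\begin{equation*}
P(g \circ f)(A) = \{(g \circ f)(a) : a \in A\} = \{g(f(a)) : a \in A\} = \{g(b) : b \in P(f)(A)\} = P(g)(P(f)(A)),
\end{equation*}
so $P(g \circ f) = P(g) \circ P(f)$. Both steps are entirely mechanical.

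There is no real obstacle here; this is a standard textbook construction (the covariant powerset functor) and the verification is a few lines of set-theoretic bookkeeping. The only thing I would flag explicitly in the writeup is the minor notational point about interpreting $\bigcup_{a \in A} f(a)$ as $\{f(a) : a \in A\}$, so that the reader is not confused into thinking $f$ is required to be set-valued.
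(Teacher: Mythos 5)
Your verification is correct and amounts to exactly the standard argument: the paper's own proof simply cites Example 1.3.2.i of Riehl's text (with the same direct identity/composition check appearing, commented out, in the source), so your mechanical verification is the intended content. Your note that $\bigcup_{a\in A} f(a)$ should be read as the direct image $\{f(a) : a\in A\}$ is a fair clarification of the paper's notation and does not change anything.
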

\begin{proof}
    See Example 1.3.2.i in \cite{riehl_context_2017}.
\end{proof}

Henceforth, when we speak of the powerset applied to a vector space, we will always mean the powerset of the vector space's underlying set of vectors. We will also need the following lemma, which says that the powerset functor applied to a linear map is again linear with respect to the Minkowski sum. Recall that the Minkowski sum of two sets $A$ and $B$ with an addition defined on them is the set $\{a+b\mid a\in A, b\in B\}$. We denote the Minkowski sum as $A\boxplus B$ to avoid confusion with our notation for coproduct. Likewise, we denote the Minkowski difference $A\boxplus (-B)$ as $A\boxminus B$.

\begin{lemma}\label{lem:mink_dist}
    Given vector spaces $X$ and $Y$, subsets $A,B\subseteq X$, and a linear map $T\maps X\to Y$, we have $(PT)(A\boxplus B) = (PT)(A) \boxplus (PT)(B)$.
\end{lemma}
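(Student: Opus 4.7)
The plan is to prove this by mutual inclusion, unfolding the definition of the Minkowski sum and using the linearity of $T$ to move it through additions.

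First I would unpack notation. By Lemma \ref{lem:pset_func}, $(PT)(S)$ is the image set $\{T(s) : s \in S\}$, and $A \boxplus B = \{a + b : a \in A,\, b \in B\}$. So the claim reduces to the set equality $T(A \boxplus B) = T(A) \boxplus T(B)$ for the image operator $T$.

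For the $\subseteq$ direction I would pick an arbitrary $y \in T(A \boxplus B)$, write $y = T(c)$ for some $c \in A \boxplus B$, decompose $c = a + b$ with $a \in A$ and $b \in B$, and apply linearity of $T$ to get $y = T(a) + T(b)$, which is manifestly in $T(A) \boxplus T(B)$. For the $\supseteq$ direction I would pick $y \in T(A) \boxplus T(B)$, write $y = T(a) + T(b)$ for $a \in A$, $b \in B$, and again apply linearity to rewrite this as $T(a+b)$; since $a + b \in A \boxplus B$, we have $y \in T(A \boxplus B)$.

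There is no real obstacle here beyond being careful with the definitions: the lemma is essentially the observation that the image functor $P$ is strong with respect to the Minkowski-sum monoidal structure on subsets of a vector space, which follows immediately from $T$ being a homomorphism of the underlying additive group. The same argument would go through for any $T$ that preserves addition; linearity is strictly stronger than needed, but it is what is available in the stated hypothesis.
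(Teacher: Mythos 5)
Your proof is correct and is essentially the paper's argument: the paper establishes the same identity by the chain of equalities $(PT)(A\boxplus B) = \{T(a+b)\mid a\in A, b\in B\} = \{Ta+Tb\mid a\in A, b\in B\} = (PT)(A)\boxplus(PT)(B)$, using linearity of $T$ exactly where you do; your two-inclusion phrasing is just a more explicit rendering of the same computation.
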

\begin{proof}
    We have the following
    \begin{align}
        (PT)(A\boxplus B) &= \{T(a+b)\mid a\in A, b\in B\} \nonumber \\
        &= \{Ta + Tb\mid a\in A,b\in B\} \nonumber \\ 
        &= (PT)(A)\boxplus(PT)(B), \nonumber
    \end{align}
    where the second equality comes from linearity of $T$ and the remaining equalities are definitions.
\end{proof}

Composition of non-deterministic systems is similar to that of deterministic systems, but utilizes the powerset functor and Minkowski summation to lift the deterministic parts of $\Dynam$ to their non-deterministic counterparts.

\begin{lemma}
    There is a finset algebra $\NDD\maps (\FinSet,+)\to (\Set,\times)$ defined as follows.
    \begin{itemize}
        \item On objects, $\NDD$ takes a finite set $N$ to the set of functions $\{\upsilon \maps \R^N\to P\R^N\}$.
        \item Given a function $\phi\maps N\to M$, the mapping $\NDD(\phi)\colon \NDD(N)\to\NDD(M)$ is defined by
        \begin{equation}
            v\mapsto P\phi_*\circ v\circ \phi^*.
        \end{equation}
        \item Given finite sets $N$ and $M$, the product comparison \begin{equation*}\varphi_{N,M}\maps\NDD(N)\times\NDD(M)\to\NDD(N+M)\end{equation*} is defined by the function
        \begin{equation}
            (\upsilon,\rho)\mapsto P\iota_{N*}\circ\upsilon\circ\iota_N^* \boxplus P\iota_{M*}\circ\rho\circ\iota_M^*.
        \end{equation}
        \item The unit comparison $\varphi_0\maps \1\to \NDD(\emptyset)$ is the function $\R^0\to P\R^0$ defined by $0\mapsto \{0\}$.
    \end{itemize}
\end{lemma}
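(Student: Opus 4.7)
The plan is to verify the three things needed for $\NDD$ to be a lax symmetric monoidal functor: (i) functoriality, (ii) naturality of the product comparison, and (iii) the associativity, unitality, and symmetry coherence axioms. Throughout, I would lean heavily on the fact that $(-)_*$, $(-)^*$, and $P$ are already known to be functors, so that one can slide $P$ past compositions freely.

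First, for functoriality, I would note that $\NDD(\id_N)(\upsilon) = P(\id_N)_* \circ \upsilon \circ \id_N^* = \upsilon$ because all three of $(-)_*$, $(-)^*$, and $P$ preserve identities. For composition, given $\phi\maps N\to M$ and $\psi\maps M\to L$, I would use $(\psi\circ\phi)_* = \psi_*\circ\phi_*$ and $(\psi\circ\phi)^* = \phi^*\circ\psi^*$ (from Example \ref{ex:push-pull}), together with Lemma \ref{lem:pset_func}, to obtain
\begin{equation*}
    P(\psi\circ\phi)_* \circ \upsilon \circ (\psi\circ\phi)^* = P\psi_* \circ P\phi_* \circ \upsilon \circ \phi^* \circ \psi^*,
\end{equation*}
which is exactly $\NDD(\psi)\circ\NDD(\phi)$ applied to $\upsilon$.

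Second, I would verify naturality of $\varphi_{N,M}$. Given $\phi\maps X\to N$ and $\psi\maps Y\to M$ and $(\upsilon,\rho)\in\NDD(X)\times\NDD(Y)$, I want to show
\begin{equation*}
    \NDD(\phi+\psi)(\varphi_{X,Y}(\upsilon,\rho)) = \varphi_{N,M}(\NDD(\phi)(\upsilon),\,\NDD(\psi)(\rho)).
\end{equation*}
On the left, after unfolding the definitions and using Lemma \ref{lem:mink_dist} to distribute $P(\phi+\psi)_*$ over the Minkowski sum, I would get
\begin{equation*}
    P(\phi+\psi)_*\circ P\iota_{X*}\circ \upsilon \circ \iota_X^*\circ(\phi+\psi)^* \,\boxplus\, P(\phi+\psi)_*\circ P\iota_{Y*}\circ \rho\circ \iota_Y^*\circ(\phi+\psi)^*.
\end{equation*}
The right side unfolds to
\begin{equation*}
    P\iota_{N*}\circ P\phi_*\circ \upsilon\circ \phi^*\circ\iota_N^* \,\boxplus\, P\iota_{M*}\circ P\psi_*\circ \rho\circ\psi^*\circ \iota_M^*.
\end{equation*}
Equality then follows from the two commuting squares $(\phi+\psi)\circ\iota_X = \iota_N\circ\phi$ and $\iota_X^*\circ(\phi+\psi)^* = \phi^*\circ\iota_N^*$ (the latter being the $P$-free analogue of diagram \eqref{eq:oplus_comm}), together with functoriality of $(-)_*$, $(-)^*$, and $P$, applied termwise to each summand.

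Third, the coherence diagrams (associativity, unitality, symmetry) reduce to the fact that Minkowski sum is associative and commutative with unit $\{0\}$, combined with the corresponding facts for the inclusions $\iota_N,\iota_M$. In particular, the unit comparison $\varphi_0\maps \1\to\NDD(\emptyset)$ picks the map $0\mapsto\{0\}$, and for any $\upsilon\in\NDD(N)$ one has $P\iota_{N*}\circ\upsilon\circ\iota_N^* \boxplus \{0\} = P\iota_{N*}\circ\upsilon\circ\iota_N^*$, so unitality is immediate.

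The main obstacle will be naturality of $\varphi_{N,M}$: without Lemma \ref{lem:mink_dist}, one cannot pull the pushforward past the Minkowski sum, so that lemma is doing essentially all the work beyond what the deterministic proof for $\Dynam$ already handled. Once distributivity is available, the rest reduces to the same commuting-square calculations as in the deterministic case.
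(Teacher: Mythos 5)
Your proposal is correct and follows essentially the same route as the paper: functoriality from functoriality of $P$, $(-)_*$, and $(-)^*$; naturality of the product comparison by unfolding both paths, distributing $P(\phi+\psi)_*$ over the Minkowski sum via Lemma \ref{lem:mink_dist}, and matching terms through commuting squares; and coherence from the monoid structure of Minkowski sum (the paper notes Lemma \ref{lem:mink_dist} is also used once in the associativity diagram). Your phrasing of the comparison step via the finset-level squares $(\phi+\psi)\circ\iota_X=\iota_N\circ\phi$ is just a mild repackaging of the paper's diagram chase through $\phi_*\oplus\psi_*$ and the projection squares of \eqref{eq:oplus_comm}.
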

\begin{proof}
    Functoriality follows straightforwardly from the fact that $P$ and $(-)_*$ are covariant functors while $(-)^*$ is a contravariant functor. In particular, for preservation of identities we have
    \begin{equation}
        \NDD(\id_N)(v)\coloneqq P(\id_{N*})\circ v\circ \id_N^* = \id_{P\R^N}\circ v\circ \id_{\R^N} = v.
    \end{equation}
    Similarly, for preservation of composition, given $\phi\maps X\to Y$ and $\psi\maps Y\to Z$, we have
    \begin{align}
        \NDD(\psi\circ\phi)(v) &\coloneqq P((\psi\circ\phi)_*)\circ v\circ (\psi\circ\phi)^* \\
        &= P(\psi_*\circ\phi_*)\circ v\circ \phi^*\circ \psi^* \\
        &= P\psi_*\circ P\phi_*\circ v\circ \phi^*\circ \psi^* \\
        &= \NDD(\psi)\circ \NDD(\phi)(v).
    \end{align}

    For the verification of the naturality of the product comparison, we need to show that the diagram
\[\begin{tikzcd}
	{\NDD(X)\times\NDD(Y)} &&& {\NDD(N)\times\NDD(M)} \\
	{\NDD(X+Y)} &&& {\NDD(N+M)}
	\arrow["{\NDD(\phi)\times\NDD(\psi)}", from=1-1, to=1-4]
	\arrow["{\varphi_{X,Y}}"', from=1-1, to=2-1]
	\arrow["{\varphi_{N,M}}", from=1-4, to=2-4]
	\arrow["{\NDD(\phi+\psi)}"', from=2-1, to=2-4]
\end{tikzcd}\]
    commutes for all finite sets $X,Y,N,M$ and functions $\phi\maps X\to N$ and $\psi\maps Y\to M$.
    So, fix any systems $(\upsilon,\rho)\in\NDD(X)\times\NDD(Y)$. Following the top path yields the system
    \begin{equation}\label{eq:system1}
        P\iota_{N*}\circ P\phi_*\circ\upsilon\circ\phi^*\circ\iota_N^* \boxplus P\iota_{M*}\circ P\psi_*\circ \rho \circ \psi^*\circ \iota_M^* = P(\iota_{N*}\circ\phi_*)\circ\upsilon\circ\phi^*\circ\iota_N^* \boxplus P(\iota_{M*}\circ\psi_*)\circ\rho\circ\psi^*\circ\iota_M^*,
    \end{equation}
    where the equality comes from functoriality of $P$. Following the bottom path yields the system
    \begin{equation}\label{eq:system2}
    \begin{array}{ll@{}ll}
        P(\phi + \psi)_* \circ (P\iota_{X*}\circ\upsilon\circ\iota_X^* \boxplus P\iota_{Y*}\circ\rho\circ\iota_Y^*)\circ(\phi+\psi)^*\\= P(\phi_*\oplus\psi_*)\circ (P\iota_{X*}\circ\upsilon\circ\iota_X^* \boxplus P\iota_{Y*}\circ\rho\circ\iota_Y^*)\circ(\phi^*\oplus\psi^*) \\
        = (P(\phi_*\oplus\psi_*)\circ P\iota_{X*}\circ \upsilon\circ\iota_X^* \boxplus P(\phi_*\oplus\psi_*)\circ P\iota_{Y*}\circ \rho\circ\iota_Y^*)\circ(\phi^*\oplus\psi^*) \\
        = (P((\phi_*\oplus\psi_*)\circ\iota_{X*})\circ\upsilon\circ\iota_X^* \boxplus P((\phi_*\oplus\psi_*)\circ\iota_{Y*})\circ\rho\circ\iota_Y^*)\circ(\phi^*\oplus\psi^*),
    \end{array}
    \end{equation} 
    where the first equality comes from the fact that $(-)_*$ and $(-)^*$ are strong monoidal functors, the second equality comes from Lemma \ref{lem:mink_dist}, and the third equality comes from functoriality of $P$.
    Finally, the commutativity of the following diagrams
\[\begin{tikzcd}
	{\R^X} && {\R^N} && {\R^Y} && {\R^M} \\
	{\R^X\oplus\R^Y} && {\R^N\oplus\R^M} && {\R^X\oplus\R^Y} && {\R^N\oplus\R^M}
	\arrow["{\phi_*}", from=1-1, to=1-3]
	\arrow["{\iota_{X*}}"', from=1-1, to=2-1]
	\arrow["{\iota_{N*}}", from=1-3, to=2-3]
	\arrow["{\phi_*\oplus\psi_*}"', from=2-1, to=2-3]
	\arrow["{\psi_*}", from=1-5, to=1-7]
	\arrow["{\iota_{Y*}}"', from=1-5, to=2-5]
	\arrow["{\iota_{M*}}", from=1-7, to=2-7]
	\arrow["{\phi_*\oplus\psi_*}"', from=2-5, to=2-7]
\end{tikzcd}\]
    together with the commutativity of the diagrams in \eqref{eq:oplus_comm} shows that \eqref{eq:system1} is equivalent to \eqref{eq:system2}, as desired.

    The proofs of the associativity and unit diagrams are lengthy but straightforward calculations, with the only non-trivial step being another application of Lemma \ref{lem:mink_dist}.
\end{proof}

We can perform a similar extension to define an algebra for composing non-deterministic discrete systems. First note that there is a natural transformation $\eta\maps \id_\Set\Rightarrow P$ with components $\eta_X\maps X\to PX$ defined by
\begin{equation}
    x\mapsto \{x\}
\end{equation}
for all $x\in X$.
This will be useful to lift the identity functions in the definition of $\Dynam_D$ (Example \ref{ex:dynam}) with their non-deterministic counterparts.

\begin{lemma}
    There is a finset algebra $\NDD_D\maps(\FinSet,+)\to(\Set,\times)$ defined as follows.
    \begin{itemize}
        \item On objects, $\NDD_D$ takes a finite set $N$ to the set of functions $\R^N\to P\R^N$.
        \item Given a map $\phi\maps N\to M$, the map $\NDD_D(\phi)\maps \NDD_D(N)\to\NDD_D(M)$ is defined by the function
        \begin{equation}
            v\mapsto \eta_{\R^M}\boxplus P\phi_*\circ (v\boxminus\eta_{\R^N})\circ\phi^*.
        \end{equation}
        \item The comparison maps are the same as they are in $\NDD$.
    \end{itemize}
\end{lemma}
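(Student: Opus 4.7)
The plan is to verify functoriality and monoidal coherence of $\NDD_D$ by adapting the arguments already used for $\NDD$ and for $\Dynam_D$ in Example \ref{ex:dynam}. The key new ingredient is the singleton-cancellation identity $(A \boxplus \{x\}) \boxminus \{x\} = A$, valid for any subset $A$ of a vector space and any vector $x$, which is the non-deterministic analog of how $\id$ terms cancel in the deterministic definition of $\Dynam_D$.

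For functoriality, preservation of identities is immediate: for any $v \in \NDD_D(N)$ and $x \in \R^N$, one has $\NDD_D(\id_N)(v)(x) = \{x\} \boxplus (v(x) \boxminus \{x\}) = v(x)$ by cancellation. For preservation of composition, given $\phi\maps N \to M$ and $\psi\maps M \to K$, I would unfold $\NDD_D(\psi) \circ \NDD_D(\phi)(v)$ at a point $w \in \R^K$ and use singleton-cancellation to eliminate the intermediate $\{\psi^*(w)\}$ contributed by the inner $\eta_{\R^M}$. After this simplification, functoriality of $P$, covariance of $(-)_*$, and contravariance of $(-)^*$ collapse the expression to $\eta_{\R^K} \boxplus P(\psi\circ\phi)_* \circ (v \boxminus \eta_{\R^N}) \circ (\psi\circ\phi)^*$, which is $\NDD_D(\psi \circ \phi)(v)(w)$.

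Because the product and unit comparison maps of $\NDD_D$ are defined identically to those of $\NDD$, the associativity, symmetry, and unitality laws are inherited directly from the corresponding computations for $\NDD$. The non-trivial remaining task is naturality of $\varphi_{N,M}$ against the new morphism action. On the top path, $\NDD_D(\phi+\psi) \circ \varphi_{X,Y}(\upsilon,\rho)$ produces $\eta_{\R^{N+M}} \boxplus P(\phi+\psi)_*$ applied to a Minkowski expression in $\upsilon, \rho, \eta_{\R^{X+Y}}$; Lemma \ref{lem:mink_dist} lets the powerset of the linear map $(\phi+\psi)_*$ distribute through $\boxplus$ and $\boxminus$, and strong monoidality of $(-)_*$ and $(-)^*$ lets the maps split into separate $\phi$- and $\psi$-components. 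On the bottom path, $\varphi_{N,M} \circ (\NDD_D(\phi) \times \NDD_D(\psi))(\upsilon,\rho)$ produces, among other terms, the pointwise singleton-valued function $P\iota_{N*} \circ \eta_{\R^N} \circ \iota_N^* \boxplus P\iota_{M*} \circ \eta_{\R^M} \circ \iota_M^*$, which at each $z \in \R^{N+M}$ evaluates to $\{\iota_{N*}\iota_N^*(z) + \iota_{M*}\iota_M^*(z)\} = \{z\}$, i.e., it equals $\eta_{\R^{N+M}}$ because $\iota_{N*}\iota_N^* + \iota_{M*}\iota_M^* = \id_{\R^{N+M}}$. The remaining non-$\eta$ portions on both sides coincide by the already-established naturality of $\varphi$ for $\NDD$.

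The main obstacle is the bookkeeping in this naturality check: one must carefully split, distribute, and then recombine the $\eta$ contributions using Lemma \ref{lem:mink_dist} together with the singleton-cancellation identity, so that the reassembled $\eta_{\R^{N+M}}$ on the bottom path lines up with the outer $\eta_{\R^{N+M}}$ of the top path and the remaining non-identity parts reduce to the naturality statement already proven for $\NDD$.
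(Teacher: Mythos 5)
Your proposal is correct and follows essentially the same route as the paper: preservation of identities and of composition via functoriality of $P$, $(-)_*$, and $(-)^*$ together with the singleton-cancellation identity (which the paper uses implicitly in its ``holds by definition'' steps), and inheritance of the comparison-map coherence from $\NDD$. The only difference is that where the paper simply remarks that the comparison-map proofs are ``similar'' to those for $\NDD$, you explicitly carry out the naturality check against the new morphism action, correctly using Lemma \ref{lem:mink_dist} and the identity $\iota_{N*}\iota_N^*+\iota_{M*}\iota_M^*=\id_{\R^{N+M}}$ to reassemble $\eta_{\R^{N+M}}$ -- a welcome elaboration rather than a departure.
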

\begin{proof}
    To see that $\NDD_D$ preserves identities, let $X\in\FinSet$ and $v\maps \R^X\to P\R^X$ be arbitrary. Then
    \begin{align}
        \NDD_D(\id_X)(v) &\coloneqq
        \eta_{\R^X}\boxplus P(\id_{X*})\circ (v\boxminus\eta_{\R^X})\circ\id_X^* \nonumber \\
        &= \eta_{\R^X} \boxplus \id_{P\R^X}\circ (v \boxminus \eta_{\R^X})\circ\id_{\R^X} \\
        &= \eta_{\R^X}\boxplus v\boxminus\eta_{\R^X} \nonumber \\
        &=v, \nonumber
    \end{align}
    where the first equality comes from preservation of identities of the powerset, pushforward, and pullback functors and the second equality comes from identities being left and right units for composition.
    To see that $\NDD_D$ preserves composition, let $\phi\maps X\to Y$ and $\psi\maps Y\to Z$ be arbitrary morphisms in $\FinSet$. Then
    \begin{align}
        \NDD_D(\psi\circ\phi)(v) &\coloneqq \eta_{\R^Z}\boxplus P((\psi\circ\phi)_*)\circ(v\boxminus\eta_{\R^X})\circ(\psi\circ\phi)^* \nonumber \\ 
        &= \eta_{\R^Z}\boxplus P(\psi_*)\circ(P(\phi_*)\circ(v\boxminus\eta_{\R^X})\circ\phi^*)\circ\psi^* \\ 
        &= \eta_{\R^Z} \boxplus P(\psi_*)\circ(\NDD_D(\phi)(v) \boxminus \eta_{\R^Y})\circ\psi^* \nonumber \\
        &= (\NDD_D(\psi)\circ\NDD_D(\phi))(v), \nonumber
    \end{align}
    where the first equality holds by preservation of composition of the powerset and free vector space functors, and the rest hold by definition.

    Since the comparison maps are the same as in $\NDD$, the proofs are similar.
\end{proof}

Similar to deterministic systems, we can again use Euler's method on non-deterministic systems to map continuous to discrete.

\begin{lemma}
    Given $\gamma >0$, there is a monoidal natural transformation $\Euler^\gamma\maps \NDD\Rightarrow\NDD_D$ with components $\Euler^\gamma_N\maps \NDD(N)\to\NDD_D(N)$ given by the function 
    \begin{equation}
        \upsilon\mapsto \eta_{\R^N} \boxplus \gamma \upsilon.
    \end{equation}
\end{lemma}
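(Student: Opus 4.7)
The plan is to verify that the proposed maps $\upsilon \mapsto \eta_{\R^N} \boxplus \gamma \upsilon$ assemble into a monoidal natural transformation, following the same template as the deterministic Euler morphism from \cite{libkind_operadic_2022}. The core algebraic identity underlying everything is the pointwise Minkowski cancellation
\[(\eta_{\R^N} \boxplus \gamma\upsilon) \boxminus \eta_{\R^N} = \gamma\upsilon,\]
which holds because $\eta_{\R^N}(x) = \{x\}$ is a singleton, so $\{x\} \boxplus \gamma\upsilon(x) \boxplus (-\{x\}) = \gamma\upsilon(x)$. This identity is precisely what allows the $\eta$ terms arising from the definitions of $\NDD_D(\phi)$ and $\Euler^\gamma$ to interact cleanly. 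I also need one further instance of Lemma \ref{lem:mink_dist}: since scalar multiplication by $\gamma$ is a linear map, $P\phi_*(\gamma A) = \gamma (P\phi_*)(A)$ and $\gamma(A \boxplus B) = \gamma A \boxplus \gamma B$.

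For naturality, I would fix $\phi\maps N \to M$ and chase an arbitrary $\upsilon\in\NDD(N)$ around the square. The down-then-right composite yields $\eta_{\R^M} \boxplus \gamma\bigl(P\phi_* \circ \upsilon \circ \phi^*\bigr)$. The right-then-down composite unfolds to
\[\eta_{\R^M} \boxplus P\phi_* \circ \bigl((\eta_{\R^N} \boxplus \gamma\upsilon) \boxminus \eta_{\R^N}\bigr) \circ \phi^*,\]
which by the cancellation identity reduces to $\eta_{\R^M} \boxplus P\phi_* \circ (\gamma\upsilon) \circ \phi^*$. Pulling $\gamma$ through $P\phi_*$ via Lemma \ref{lem:mink_dist} matches the two composites.

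For the product-comparison square, I evaluate both paths on $(\upsilon,\rho)\in\NDD(N)\times\NDD(M)$. The down-then-right composite is $\eta_{\R^{N+M}} \boxplus \gamma\bigl(P\iota_{N*}\circ\upsilon\circ\iota_N^* \boxplus P\iota_{M*}\circ\rho\circ\iota_M^*\bigr)$. The right-then-down composite expands, using Lemma \ref{lem:mink_dist} to distribute the pushforwards over Minkowski sums, into
\[\bigl(P\iota_{N*}\circ\eta_{\R^N}\circ\iota_N^* \boxplus P\iota_{M*}\circ\eta_{\R^M}\circ\iota_M^*\bigr) \boxplus \gamma P\iota_{N*}\circ\upsilon\circ\iota_N^* \boxplus \gamma P\iota_{M*}\circ\rho\circ\iota_M^*.\]
The first parenthesized term equals $\eta_{\R^{N+M}}$: evaluating at $z$ gives the singleton $\{\iota_{N*}\iota_N^* z + \iota_{M*}\iota_M^* z\} = \{z\}$, since $\R^{N+M}\cong\R^N\oplus\R^M$ via these inclusions and projections. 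Distributing $\gamma$ over the remaining Minkowski sum (again Lemma \ref{lem:mink_dist} applied to scalar multiplication) matches the down-then-right expression. The unit-comparison square is trivial: both paths send the unique point of $\1$ to the map $0 \mapsto \{0\}$ on $\R^\emptyset$.

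I do not anticipate a real obstacle; the only subtlety beyond the deterministic Euler proof is keeping track of the Minkowski bookkeeping, which is entirely handled by the cancellation identity and Lemma \ref{lem:mink_dist} applied to the linear maps $\phi_*$, $\iota_{N*}$, $\iota_{M*}$, and scalar multiplication by $\gamma$.
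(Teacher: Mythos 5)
Your proposal is correct and takes essentially the same route as the paper: both proofs verify naturality and the two monoidality squares by direct computation, reducing the $\NDD_D(\phi)$ path via the singleton cancellation $(\eta\boxplus\gamma\upsilon)\boxminus\eta=\gamma\upsilon$, distributing pushforwards over Minkowski sums with Lemma \ref{lem:mink_dist}, and checking $P\iota_{N*}\circ\eta_{\R^N}\circ\iota_N^*\boxplus P\iota_{M*}\circ\eta_{\R^M}\circ\iota_M^*=\eta_{\R^{N+M}}$ pointwise. The only differences are cosmetic: you make explicit the cancellation and the $\gamma$-distribution steps that the paper uses implicitly, and your ``down-then-right''/``right-then-down'' labels are swapped relative to the paper's orientation of the squares, which does not affect the verification.
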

\begin{proof}
    To verify naturality, we need to prove that the diagram
\[\begin{tikzcd}
	{\NDD(N)} && {\NDD(M)} \\
	\\
	{\NDD_D(N)} && {\NDD_D(M)}
	\arrow["{\NDD(\phi)}", from=1-1, to=1-3]
	\arrow["{\Euler^\gamma_N}"', from=1-1, to=3-1]
	\arrow["{\Euler^\gamma_M}", from=1-3, to=3-3]
	\arrow["{\NDD_D(\phi)}"', from=3-1, to=3-3]
\end{tikzcd}\]
    commutes for all $N,M\in \FinSet$ and $\phi\maps N\to M$. So, let $\upsilon\maps\R^N\to P\R^N$ be arbitrary. Following the top path yields the system
    \begin{equation}\label{eq:euler_top}
        \eta_{\R^M} \boxplus \gamma(P(\phi_*)\circ\upsilon\circ\phi^*),
    \end{equation}
    while following the bottom path yields the system
    \begin{equation}
        \eta_{\R^M} \boxplus P(\phi_*)\circ (\eta_{\R^N}\boxplus\gamma \upsilon \boxminus \eta_{\R^N})\circ\phi^* = \eta_{\R^M} \boxplus P(\phi_*)\circ\gamma\upsilon\circ\phi^*,
    \end{equation}
    which equals~\eqref{eq:euler_top}, as desired.
    To verify that the transformation is monoidal, we must verify that the diagrams
\[\begin{tikzcd}
	{\NDD(N)\times\NDD(M)} &&& {\NDD_D(N)\times\NDD_D(M)} \\
	\\
	{\NDD(N+M)} &&& {\NDD_D(N+M)}
	\arrow["{\varphi^{\NDD_D}_{N,M}}", from=1-4, to=3-4]
	\arrow["{\varphi^\NDD_{N,M}}"', from=1-1, to=3-1]
	\arrow["{\Euler^\gamma_N\times\Euler^\gamma_M}", from=1-1, to=1-4]
	\arrow["{\Euler^\gamma_{N+M}}"', from=3-1, to=3-4]
\end{tikzcd}\]
    and
\[\begin{tikzcd}
	{\1} && {\NDD(\emptyset)} \\
	&& {\NDD_D(\emptyset)}
	\arrow["{\Euler^\gamma_\emptyset}", from=1-3, to=2-3]
	\arrow["{\varphi_0^\NDD}", from=1-1, to=1-3]
	\arrow["{\varphi_0^{\NDD_D}}"', from=1-1, to=2-3]
\end{tikzcd}\]
    commute for all $N,M\in\FinSet$.
    For the product comparison diagram, let $(\upsilon,\rho)\in\NDD(N)\times\NDD(M)$ be arbitrary. Following the top path yields the system
    \begin{equation}
    \begin{array}{ll@{}ll}
        P(\iota_{N*})\circ(\eta_{\R^N}\boxplus\gamma\upsilon)\circ\iota_N^* \boxplus P(\iota_{M*})\circ(\eta_{\R^M}\boxplus\gamma\rho)\circ\iota_M^*=\\
        P(\iota_{N*})\circ\eta_{\R^N}\circ\iota_N^* \boxplus P(\iota_{M*})\circ\eta_{\R^M}\circ\iota_M^* \boxplus P(\iota_{N*})\circ\gamma\upsilon\circ\iota_N^* \boxplus P(\iota_{M*})\circ\gamma\rho\circ\iota_M^*,
    \end{array}
    \end{equation}
    where the equality comes from application of Lemma \ref{lem:mink_dist} and rearrangement of terms. Following the bottom path yields the system
    \begin{equation}
        \eta_{\R^{N+M}}\boxplus P(\iota_{N*})\circ\gamma\upsilon\circ\iota_N^* \boxplus P(\iota_{M*})\circ\gamma\rho\circ\iota_M^*.
    \end{equation}
    So, to complete this part of the proof, we just need to show that $\eta_{\R^{N+M}}=P(\iota_{N*})\circ\eta_{\R^N}\circ\iota_N^* \boxplus P(\iota_{M*})\circ\eta_{\R^M}\circ\iota_M^*$. For this, note that $\R^{N+M}\cong\R^N\times\R^M$ and let $(x,y)\in\R^N\times\R^M$ be arbitrary. Then
    \begin{equation}
    \begin{array}{ll@{}ll}
        (P(\iota_{N*})\circ\eta_{\R^N}\circ\iota_N^* \boxplus P(\iota_{M*})\circ\eta_{\R^M}\circ\iota_M^*)(x,y)=\\
        \{(x,0)\} \boxplus \{(0,y)\} = \{(x,y)\}=\eta_{\R^{N+M}}(x,y).
    \end{array}
    \end{equation}

    Finally, for the unit comparison diagram, let $z\maps\R^0\to P\R^0$ denote the zero function $0\mapsto \{0\}$. We have $(\eta_{\R^0}\boxplus\gamma z)(0) = \{0\} \boxplus \gamma\{0\} = \{0\}$, as desired.
\end{proof}

\subsection{Algebra Morphisms of Subgradient and Supergradient Methods}

With UWD algebras of non-deterministic dynamical systems defined, we can turn to proving that subgradient methods decompose non-differentiable convex optimization problems defined on UWDs. This will again allow us to extend the compositional data condition in a natural way to encompass non-differentiable convex problems.


\begin{theorem}
    There is a monoidal natural transformation $\subg\maps \Conv\Rightarrow\NDD$ with components
    \begin{equation}
        \subg_N\maps \Conv(N)\to\NDD(N),\
        f \mapsto -\partial f.
    \end{equation}
\end{theorem}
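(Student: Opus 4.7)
The plan is to mirror the structure of the proof of Theorem \ref{thm:flow} (that $\flow$ is an algebra morphism from $\Opt$ to $\Dynam$), with the crucial changes being that (i) subdifferentials are set-valued, so we must track Minkowski sums rather than ordinary vector sums, and (ii) the chain rule and sum rule for gradients are replaced by their subdifferential analogues from convex analysis. Throughout, $\Conv(N)$ consists of finite-valued convex functions $\R^N \to \R$, so classical subdifferential calculus applies without constraint qualifications.

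First I would verify naturality. Fix $\phi \maps N \to M$ and $f \in \Conv(N)$. Following the top path of the naturality square at $y \in \R^M$ yields
\begin{equation}
    \NDD(\phi)(-\partial f)(y) = -P\phi_*\bigl(\partial f(\phi^*(y))\bigr),
\end{equation}
while following the bottom yields $-\partial(f \circ \phi^*)(y)$. These agree by the subdifferential chain rule for composition with a linear map (Rockafellar, \emph{Convex Analysis}, Theorem 23.9), which gives $\partial(f \circ A)(y) = A^T \partial f(Ay)$ when $f$ is finite and convex, combined with the fact that the matrix of $\phi_*$ is the transpose of the matrix of $\phi^*$. Negation distributes through the Minkowski image in the expected way.

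Next I would verify the product comparison square. Let $f \in \Conv(N)$ and $g \in \Conv(M)$, and let $h(z) \coloneqq f(\iota_N^* z) + g(\iota_M^* z)$ denote the object $\varphi^{\Conv}_{N,M}(f,g)$. Following the bottom path yields $-\partial h(z)$, which by the subdifferential sum rule for finite convex functions (Rockafellar, Theorem 23.8) equals $-\bigl(\partial(f \circ \iota_N^*)(z) \boxplus \partial(g \circ \iota_M^*)(z)\bigr)$. Applying the chain rule as above to each term, this equals
\begin{equation}
    -P\iota_{N*}\bigl(\partial f(\iota_N^* z)\bigr) \boxminus P\iota_{M*}\bigl(\partial g(\iota_M^* z)\bigr),
\end{equation}
which is precisely the result of following the top path, using Lemma \ref{lem:mink_dist} to move $P\iota_{N*}$ and $P\iota_{M*}$ through the Minkowski operations. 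The unit comparison diagram commutes trivially, since $\partial 0 = \{0\}$ is the unique map $\R^0 \to P\R^0$ picked out by $\varphi_0^{\NDD}$.

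The main obstacle is justifying the set-valued chain and sum rules; these are classical but require that $f$ be finite-valued on all of $\R^N$, which is exactly how $\Conv$ is defined. I would cite Rockafellar rather than re-prove these results. The only additional bookkeeping beyond the proof of Theorem \ref{thm:flow} is the careful use of Minkowski arithmetic via Lemma \ref{lem:mink_dist}, which ensures that $P\phi_*$ distributes over $\boxplus$ so that the Minkowski structure on the codomain is respected.
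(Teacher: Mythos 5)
Your proposal is correct and follows essentially the same route as the paper: naturality via the set-valued chain rule $\partial(f\circ K) = K^T\,\partial f\, K$ for precomposition with a linear map (the paper cites Boyd's subgradient notes where you cite Rockafellar), and monoidality via the Minkowski sum rule for subdifferentials, with the same bookkeeping through the powerset functor and pushforward/pullback duality. Your treatment of the product and unit comparison squares is in fact slightly more explicit than the paper's, which simply says the monoidality argument mirrors the naturality one.
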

\begin{proof}
    We first need to check naturality, i.e., check that the diagram
\[\begin{tikzcd}
	{\Conv(N)} && {\Conv(M)} \\
	\\
	{\NDD(N)} && {\NDD(M)}
	\arrow["{\Conv(\phi)}", from=1-1, to=1-3]
	\arrow["{\subg_N}"', from=1-1, to=3-1]
	\arrow["{\subg_M}", from=1-3, to=3-3]
	\arrow["{\NDD(\phi)}"', from=3-1, to=3-3]
\end{tikzcd}\]
    commutes for all $N,M\in \FinSet$ and $\phi\maps N\to M$. So, let $f\in\Conv(N)$ be arbitrary and let $K=\mathcal{M}(\phi^*)$. Following the bottom path yields $PK^T\circ -\partial f\circ K$ while following the top path yields $-\partial(f\circ K)$. A fact about subgradients is that given a linear transformation $K$, the identity $\partial (f\circ K) = K^T\circ \partial f\circ K$ holds (\cite{boyd_subg_notes}, \S 3.3).
    In such literature, there is an implicit application of the powerset functor such that $K^T\circ\partial f\circ K$ is implied to mean $P(K^T)\circ\partial f\circ K$. Applying this identity to the result of following the top path shows that the two paths are equivalent.

    Similar to Theorem \ref{thm:flow}, the proof of monoidality follows the same reasoning as the proof of naturality, noting that the subgradient of a sum of functions is given by the Minkowski sum of the subgradients of the summands (\cite{boyd_subg_notes}, \S 3.2).
\end{proof}

A dual result holds for concave functions and supergradients.

\begin{corollary}
    There is a monoidal natural transformation $\superg\maps \Conc\Rightarrow\NDD$ with components $\superg_N\maps \Conc(N)\to\NDD(N)$ defined by
    \begin{equation}
        \superg_N(f)\coloneqq \partial f.
    \end{equation}
\end{corollary}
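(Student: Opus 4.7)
The plan is to exploit the duality between concave and convex functions via negation, which lets us reduce this to the subgradient theorem already established. The key identity is that for a concave function $f$, the function $-f$ is convex, and their super/subdifferentials are related by $\partial f(x) = -\partial(-f)(x)$, where the negation on the right is applied elementwise to the set. Combining this with the definition $\subg_N(g) \coloneqq -\partial g$ gives $\subg_N(-f) = -\partial(-f) = \partial f = \superg_N(f)$, so $\superg$ factors as a composite through $\subg$.

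The first step is to construct a monoidal natural transformation $\mathrm{neg}\maps \Conc \Rightarrow \Conv$ whose component at $N$ sends $f \mapsto -f$. This is well-defined since negating a concave function yields a convex one. Naturality reduces to the observation that $-(f \circ \phi^*) = (-f) \circ \phi^*$ for any linear map $\phi^*$. Monoidality is immediate because the product comparisons of both $\Conc$ and $\Conv$ are given by pointwise addition (as restrictions of the $\Saddle$ product comparison), and pointwise negation distributes over sums:
\begin{equation}
-\bigl(f(\iota_N^*(z)) + g(\iota_M^*(z))\bigr) = (-f)(\iota_N^*(z)) + (-g)(\iota_M^*(z)).
\end{equation}
The unit comparison condition holds trivially because the negation of the zero function is the zero function.

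The second step is to define $\superg$ as the vertical composite
\begin{equation}
\Conc \xRightarrow{\mathrm{neg}} \Conv \xRightarrow{\subg} \NDD.
\end{equation}
Since monoidal natural transformations are closed under vertical composition, the result is a monoidal natural transformation. Unpacking the components at $N$ gives $\subg_N(\mathrm{neg}_N(f)) = -\partial(-f) = \partial f$, matching the stated formula.

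I do not expect any significant obstacle here, since the argument is a routine dualization once the negation transformation is in hand. The only care required is the sign bookkeeping when verifying $\partial f = -\partial(-f)$ directly from the definitions of super- and subgradients, and ensuring that the chain rule $\partial(f \circ K) = K^T \circ \partial f \circ K$ and the Minkowski sum rule $\partial(f+g) = \partial f \boxplus \partial g$ which were used for subgradients transfer verbatim to supergradients via the negation identity. Alternatively, one could mimic the proof of the subgradient theorem line-by-line using these chain and sum rules for supergradients directly, but factoring through $\mathrm{neg}$ is cleaner and avoids repeating the argument.
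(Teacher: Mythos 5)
Your proposal is correct and matches the paper's intent: the paper states this as a dual corollary of the subgradient theorem without writing out a proof, and the duality-via-negation argument ($\partial f = -\partial(-f)$ for concave $f$) is exactly the intended route. Your explicit factorization $\Conc \xRightarrow{\mathrm{neg}} \Conv \xRightarrow{\subg} \NDD$, with the easy check that negation is a monoidal natural transformation, is a clean and valid way to make that implicit dualization precise.
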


\begin{corollary}\label{thm:pdsubg}
    There is a monoidal natural transformation $\pdsubg\maps \Saddle\to\NDD\circ U$ with components $\pdsubg_{N\xrightarrow{\tau}[2]}\maps \Saddle(\tau)\to\NDD(N)$ defined by
    \begin{equation}
        \pdsubg_\tau(f)(x)_i\coloneqq \begin{cases}
            -\partial_{x_i} f(x) & \tau(i) = 1 \\
            \partial_{x_i} f(x) & \tau(i) = 2,
        \end{cases}
    \end{equation}
    for all $x\in\R^N$ and $i\in N$.
\end{corollary}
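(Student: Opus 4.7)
The plan is to follow the same structural template used in the proof of the $\DiffSaddle$ saddle flow theorem, but replacing pointwise gradients with subdifferentials and supergradients, and replacing the algebra $\Dynam$ with $\NDD$. The key ingredients are already in place: the naturality proofs for $\subg$ and $\superg$ established the subgradient chain rule identity $\partial(f\circ K) = PK^T\circ \partial f\circ K$ (under the powerset lift) and the Minkowski additivity of subdifferentials, and the $\DiffSaddle$ proof showed how to handle the bookkeeping imposed by the slice $\FinSet/[2]$.

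First I would verify naturality of $\pdsubg$. Fix a morphism $\phi$ in $\FinSet/[2]$ from $\tau\maps N\to[2]$ to $\sigma\maps M\to[2]$ and a saddle function $f\in\Saddle(\tau)$, and chase both paths of the naturality square from $\Saddle(\tau)$ to $\NDD(M)$. Writing $K=\mathcal{M}(\phi^*)$, the bottom-then-right path sends $f$ to a non-deterministic system whose $j$-th component at $y\in\R^M$ is obtained by applying $P\phi_*$ (i.e., summing over the fibers of $\phi$) to the signed subdifferential/superdifferential of $f$ at $\phi^*(y)$, with the sign at index $i$ determined by $\tau(i)$. The top-then-right path sends $f$ to the signed subdifferential/superdifferential of $f\circ\phi^*$, with signs determined by $\sigma$. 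Using the subgradient chain rule for the convex components and its dual for the concave components, these two expressions agree component-wise. The crucial point, exactly as in the $\DiffSaddle$ case, is that $\phi$ is a morphism in the slice category so $\tau = \sigma\circ\phi$; this ensures that summing the signs contributed by indices $i\in\phi^{-1}(j)$ is consistent with the single sign $\sigma(j)$ attached to coordinate $j$.

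Next I would verify the monoidal axioms. The unit comparison is immediate: on $\emptyset\xrightarrow{!}[2]$ the only saddle function is the zero function, whose subdifferential is $\{0\}$, matching the unit of $\NDD$. For the product comparison, given $f\in\Saddle(\tau_1)$ and $g\in\Saddle(\tau_2)$, both paths around the square produce the Minkowski sum of the lifted signed subdifferentials of $f$ and $g$ along the canonical inclusions; this follows from the additivity $\partial(f+g) = \partial f\boxplus \partial g$ for both subdifferentials and superdifferentials, together with the fact that the convex and concave coordinates are separated by the copairing $[\tau_1,\tau_2]$.

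The main obstacle, as in the differentiable saddle case, is not any single calculation but rather carefully tracking signs and labels through the slice-category machinery. Once one is comfortable that morphisms of $\FinSet/[2]$ map convex-labelled coordinates to convex-labelled coordinates (and similarly for concave), the argument reduces to splicing together the already-proved naturality of $\subg$ on the convex part with that of $\superg$ on the concave part. Concretely, one can derive $\pdsubg$ as the pasting of these two transformations after restriction of $\Saddle$ to its convex and concave coordinates via the inclusions $\mathrm{cnvx}$ and $\mathrm{cncv}$ of Remark \ref{rem:typing}, which makes the corollary a formal consequence of the two previous results rather than an independent computation.
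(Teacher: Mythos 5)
Your proposal is correct and follows essentially the same route the paper intends: the paper states this as a corollary with no separate proof, relying exactly on the combination you describe — the subgradient chain rule and Minkowski additivity already established for $\subg$ (and dually $\superg$), spliced into the slice-category sign bookkeeping of the $\DiffSaddle$ naturality argument, where $\tau=\sigma\circ\phi$ guarantees the convex/concave labels match across the pushforward. The only caveat is your closing claim that the result is a purely formal pasting of $\subg$ and $\superg$ along $\mathrm{cnvx}$ and $\mathrm{cncv}$: a saddle function does not split as a sum of a convex and a concave function of the separate variable blocks, so the partial sub/superdifferential chase you carried out component-wise is what actually does the work, not a restriction-and-paste argument.
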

Note that there are also corollaries given by composing each of the above natural transformations with the non-deterministic Euler's transformation to derive discrete systems. We omit these for brevity.

With the naturality of these subgradient methods established, we can repeat our examples of primal decomposition (Example \ref{ex:primal_decomp}) and dual decomposition (Example \ref{ex:dual-decomp}) while relaxing the assumption of strict convexity of subproblems to mere convexity. This relaxation implies that the infima in these examples could be attained at multiple points. However, any such point $x^*\in \argmin f(w,x)$ for a fixed $w$ furnishes a subgradient for $\inf_x f(w,x)$ as $\partial f(w,x^*)$. Thus, subgradient methods give a natural extension of primal and dual decomposition to cases where subproblems are not strictly convex.

\section{Extended Example: Minimum Cost Network Flow}\label{sec:netflow}

We now illustrate how our framework gives a principled way to convert problem data into solution algorithms that respect the compositional structure of the data. Specifically, we construct a finset algebra for composing flow networks by gluing together vertices and prove that the translation of a given flow network into the associated minimum cost network flow problem is a monoidal natural transformation. This satisfies the compositional data condition (Definition \ref{def:cdc}), and thus lets us derive a distributed algorithm for solving the 
minimum cost network flow (MCNF) problem, which is a widely studied class of problem in engineering applications~\cite{bazaraa11,ford15,jensen87,kennington80,evans17}. 
We will see that the result is a generalization of a standard dual decomposition algorithm for MCNF which respects arbitrary hierarchical decompositions of flow networks.

\subsection{Standard Formulation of MCNF and Dual Decomposition}

First we review the standard non-compositional formulation of the MCNF problem. A flow network is a directed graph with designated source and sink vertices (each with a respective inflow and outflow) and a cost function associated to each edge that indicates the cost to push a given flow through that edge. The minimum cost network flow problem then involves finding an allocation of flows to the edges of a flow network that minimizes the total flow cost while satisfying the constraint that the amount of flow into a vertex must equal the amount of flow out of that vertex, for all vertices. This is known as the flow conservation constraint. Applications of network flow problems range from determining if sports teams have been eliminated from championship contention to computing how electricity flows in networks of resistors \cite{netflowbook1, netflowbook2}.

Formally, a flow network on a finite set $V$ of vertices is a 5-tuple $(E,s,t,\ell,b)$ where
\begin{itemize}
    \item $E$ is a finite set of edges,
    \item $s\maps E\to V$ maps each edge to its source vertex,
    \item $t\maps E\to V$ maps each edge to its target vertex,
    \item $\ell \maps E\to K$ where $K\coloneqq\{\kappa \maps\R\to\R\mid \kappa\in C^1 \text{ and convex}\}$ defines the flow cost function for each edge,
        \item $b\maps V\to \R$ defines the incoming or outgoing flow for each vertex: a positive value for $v\in V$ represents an inflow for $v$, a negative value represents an outflow for $v$ and a value of 0 represents no flow. The total inflow of the graph needs to equal to the total outflow of the graph implying that $\sum_{v\in V}b(v)$ must equal 0. Treating $b$ as a vector in $\R^V$, we can write this condition compactly as $\boldsymbol{1}^Tb = 0$. 
\end{itemize}
All this data can be packaged up in the following diagram:
    \begin{equation}
        K\xleftarrow{\ell} E\overset{s}{\underset{t}{\rightrightarrows}} V\xrightarrow{b}\R.
    \end{equation}
    Note that there is a unique flow network on 0 vertices given by:
\begin{equation*}
    K\xleftarrow{!}\emptyset\overset{!}{\underset{!}{\rightrightarrows}}\emptyset \xrightarrow{!}\R.
\end{equation*}
Thus the set of flow networks on 0 vertices is a singleton set and thus a terminal object in $\Set$.

Suppose we have a flow network $G=(E,s,t,\ell,b)$ on $V$ vertices. We will use the variable $x\in\R^E$ to track flows on each edge. A value of $x_e>0$ indicates flow in the direction of edge $e$ while a value of $x_e<0$ indicates flow in the opposite direction of edge $e$. Thus, the objective of the minimum cost flow problem is to minimize the function
\begin{equation}
    \sum_{e\in E}\ell(e)(x_e).
\end{equation}

We also must enforce the flow conservation constraint. To encode this mathematically, let $A\in\R^{V\times E}$ denote the node incidence matrix\footnote{The first condition in the definition is required to quotient out self loops.} of $G$, i.e.,
\begin{equation}
        A_{v,e} \coloneqq \begin{cases}
            0 & \text{ if } s(e)= t(e) = v \\
            1 & \text{ if } s(e)=v \\
            -1 & \text{ if }  t(e)=v \\
            0 & \text{ otherwise.}
        \end{cases}
\end{equation}
Then flow conservation can be compactly stated as requiring that $Ax=b$.
Combining the objective function and constraints, the minimum cost network flow problem for the flow network $G$ is
\begin{equation}
    \begin{array}{ll@{}ll}
    \text{minimize} & \sum_{e\in E}\ell(e)(x_e) & \\
    \text{subject to} & Ax=b. \\
    \end{array}
\end{equation}

A common first-order solution procedure is to form the Lagrangian and use dual decomposition. The Lagrangian is
\begin{equation}
    L(x,\lambda) \coloneqq \sum_{e\in E}\ell(e)(x_e) + \lambda^T(Ax-b).
\end{equation}
Then, the dual function $q_G\maps\R^V\to\R$ is the concave function given by 
\begin{equation}
    q_G(\lambda)\coloneqq\inf_{x\in\R^E}L(x,\lambda)= \inf_{x\in\R^E}\sum_{e\in E}\ell(e)(x_e) + \lambda^T(Ax-b).
\end{equation}
Applying supergradient ascent to $q_G$ gives the iterative algorithm
\begin{equation}
\begin{array}{ll@{}ll}
    x^*&\coloneqq \argmin L(x,\lambda_k)\\
    \lambda_{k+1}&\coloneqq \lambda_k + \gamma (Ax^* - b).
    \end{array}
\end{equation}
This works because $Ax^*-b$ is a supergradient of $q_G$ at $\lambda_k$. Since $L$ is separable in $x$, the computation of $x^*$ decomposes into $|E|$ scalar minimizations.

\subsection{Compositional Formulation of MCNF}

To generalize this dual decomposition algorithm to arbitrary hierarchical decompositions of flow networks, we must first define a finset algebra for composing flow networks.

\begin{lemma}
    There is a finset algebra\footnote{This is technically only a lax symmetric monoidal \emph{pseudofunctor} because some of the required diagrams only hold up to canonical isomorphism. See \cite{baez_structured_2022} for details on this technicality.} $\FG\maps (\FinSet,+)\to(\Set,\times)$ defined by the following maps:
    \begin{itemize}
        \item On objects, $\FG$ takes a finite set $N$ to the set of flow networks with vertex set $N$:
        \begin{equation}
        \{K\xleftarrow{\ell} E\overset{s}{\underset{t}{\rightrightarrows}} N\xrightarrow{b}\R\mid \textbf{1}^Tb=0\}.
    \end{equation}
        \item Given a morphism $\phi\maps N\to M$ in $\FinSet$, the map $\FG(\phi)\maps\FG(N)\to\FG(M)$ is defined by the function
        \begin{equation}
            (E,s,t,\ell,b)\mapsto (E,\phi\circ s,\phi\circ t,\ell, \phi_*(b)).
        \end{equation}
        \item The unit comparison $\varphi_0\maps \1\to\FG(\emptyset)$ is the unique arrow since $\FG(\emptyset)$ is a terminal object in $\Set$.
        \item Given objects $N,M\in\FinSet$, the product comparison
        \begin{equation}
            \varphi_{N,M}\maps \FG(N)\times\FG(M)\to\FG(N+M)
        \end{equation}
        takes a pair of flow networks $(E_1,s_1,t_1,\ell_1,b_1)$ and $(E_2,s_2,t_2,\ell_2,b_2)$ to their disjoint union, i.e., $(E_1+E_2,s_1+s_2,t_1+t_2,[\ell_1,\ell_2],[b_1,b_2])$.
    \end{itemize}
\end{lemma}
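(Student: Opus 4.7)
The plan is to verify, in order: (1) that $\FG(\phi)$ maps valid flow networks to valid flow networks, (2) functoriality of $\FG$, (3) well-definedness of the comparison maps, and (4) their naturality, associativity, symmetry, and unitality (the last three up to canonical isomorphism, as noted in the footnote).

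The key well-definedness check is that $\FG(\phi)$ preserves the balance condition $\mathbf{1}^Tb=0$. Given a flow network $(E,s,t,\ell,b)$ on $N$ with $\mathbf{1}^Tb=0$ and a function $\phi\maps N\to M$, the pushforward satisfies
\begin{equation}
    \mathbf{1}^T\phi_*(b) = \sum_{m\in M}\sum_{n\in\phi^{-1}(m)}b_n = \sum_{n\in N}b_n = \mathbf{1}^Tb = 0,
\end{equation}
so $\FG(\phi)(E,s,t,\ell,b)$ is again a flow network. Functoriality then reduces to two routine observations: for identities, $\FG(\id_N)$ sends $(E,s,t,\ell,b)$ to itself since $\id_N\circ s = s$, $\id_N\circ t = t$, and $\id_{N*}(b)=b$; for composition with $\psi\maps M\to L$, associativity of function composition together with functoriality of $(-)_*$ (Example \ref{ex:push-pull}) gives $\FG(\psi\circ\phi) = \FG(\psi)\circ\FG(\phi)$.

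For the comparison maps, the unit comparison lands in a singleton so it is automatic. For the product comparison, the disjoint union of two valid flow networks is again valid since $\mathbf{1}^T[b_1,b_2] = \mathbf{1}^Tb_1+\mathbf{1}^Tb_2 = 0$. Naturality of the product comparison with respect to maps $\phi\maps N_1\to M_1$ and $\psi\maps N_2\to M_2$ amounts to checking that taking the disjoint union of two networks and then applying $\FG(\phi+\psi)$ yields the same network as applying $\FG(\phi)$ and $\FG(\psi)$ separately and then taking the disjoint union. This follows from the identities $(\phi+\psi)\circ(s_1+s_2) = (\phi\circ s_1)+(\psi\circ s_2)$ (and similarly for $t$) together with the monoidality of the pushforward, which gives $(\phi+\psi)_*[b_1,b_2] = [\phi_*(b_1),\psi_*(b_2)]$.

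The main obstacle, such as it is, is the pseudofunctor caveat: associativity, symmetry, and unitality of the disjoint-union product comparison hold only up to the canonical isomorphisms witnessing that disjoint union is an associative, symmetric, unital operation on finite sets. These isomorphisms are the same ones that underlie the symmetric monoidal structure of $(\FinSet,+)$, and they act on the edge sets, source/target maps, labels, and inflow/outflow data componentwise; each of the pseudofunctor coherence diagrams then reduces to a diagram in $\FinSet$ that commutes by the coherence of disjoint union. I would cite \cite{baez_structured_2022} for the general pattern and simply note that this example fits the same template.
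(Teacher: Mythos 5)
Your proposal is correct and follows essentially the same route as the paper: defer the structural and coherence content to the labelled-graph construction of \cite{baez_structured_2022} and verify directly that the new datum, the balance condition $\mathbf{1}^Tb=0$, is preserved by the morphism map and the product comparison. Your computation $\mathbf{1}^T\phi_*(b)=\sum_{m\in M}\sum_{n\in\phi^{-1}(m)}b_n=\mathbf{1}^Tb$ via the fiber partition is in fact a cleaner version of the paper's case analysis, and the extra explicit functoriality and naturality checks are consistent with what the paper leaves to the cited reference.
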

\begin{proof}
    $\FG$ is an extension of the labelled graph functor introduced in \cite{baez_structured_2022} with the addition of a scalar on each vertex defined by the $b$ vector, so most of their proof carries over. We only need to check that the morphism map and product comparison preserve the property that 
    $\boldsymbol{1}^Tb=0$. The pairing $[b_1,b_2]$ in the product comparison preserves this property because both $\boldsymbol{1}^T b_1 = 0$ and $\boldsymbol{1}^T b_2 = 0$. For the morphism map, note that for vectors $v = \phi_*(b)$ in the image of the pushforward map, a component $v_i$ is either 0, a single element of $b$, or the sum of one or more elements of $b$. Since the components of $b$ sum to $0$ and every component of $b$ gets mapped under $\phi_*$ either to itself or a sum with other components of $b$, the components of $v$ in the image of $\phi$ must sum to zero. Furthermore, since the components of $v$ not in the image of $\phi$ are 0, the sum of all components of $v$ is 0. 
\end{proof}
Given a UWD and subgraphs with the appropriate number of nodes for each inner box, the $\FG$ algebra constructs a composite graph by merging vertices which share the same junctions. This is illustrated in Figure \ref{fig:flow-graphs}. 

\begin{figure}
    \centering
    \includegraphics[width=\textwidth]{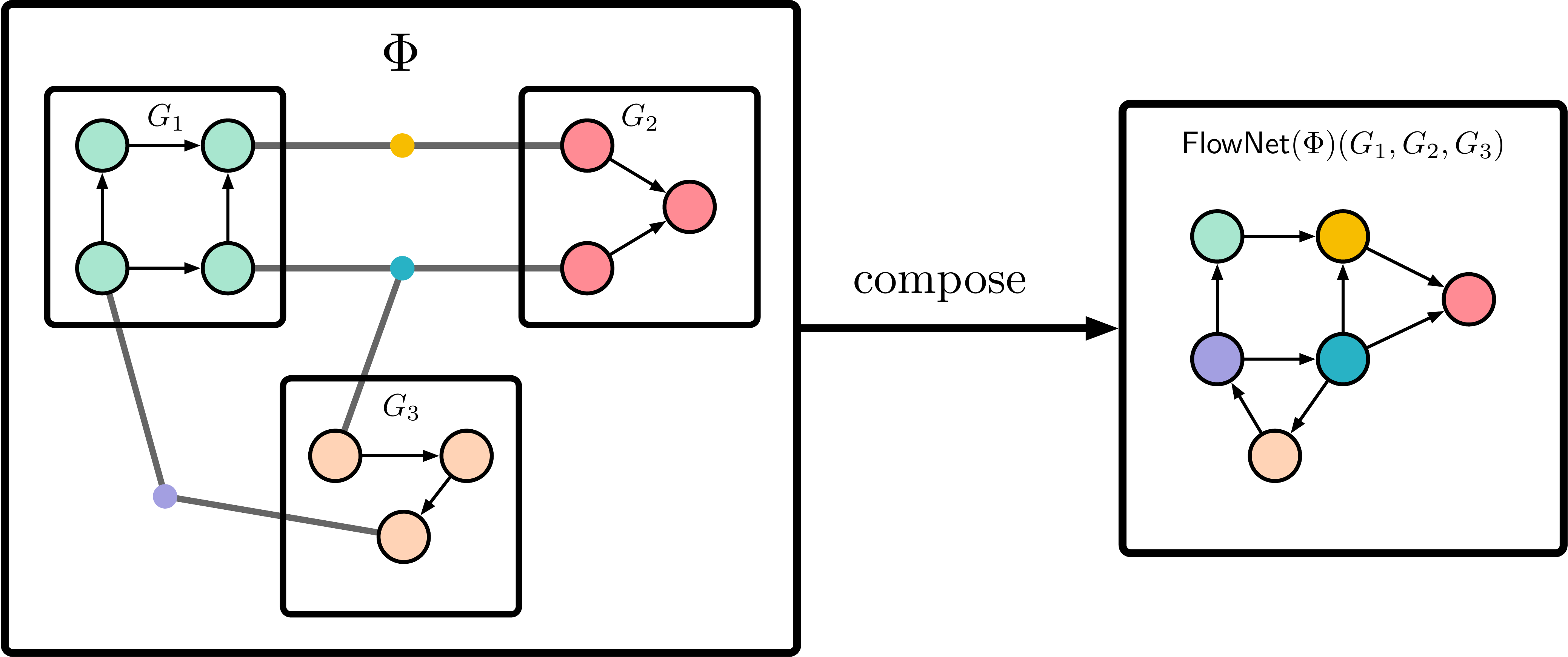}
    \caption{An illustration of composing flow networks using the $\FG$ UWD-algebra. $\FG$ is applied to the UWD $\Phi$ and the component subgraphs $G_1,G_2,$ and $G_3$ to yield the resultant flow network on the right. Here, we only illustrate the action of $\FG$ on the underlying directed graph structure of flow networks. Colors were added to emphasize the distinction between vertices which are merged and vertices which are not merged by composition.}
    \label{fig:flow-graphs}
\end{figure}

Now, our goal is to show that the translation of flow networks into their associated MCNF objective function constitutes a monoidal natural transformation and thus satisfies the compositional data condition.
First note that the translation of a flow network $G$ on $V$ vertices into the dual function $q_G$ of the associated MCNF problem gives us a function $\netflow_V\maps\FG(V)\to\Conc(V)$ defined by $G\mapsto q_G$. Since this translation works for any $V$, we have a \emph{family} of maps indexed by finite sets which will form the components of our natural transformation. 
\begin{theorem}
    There is a monoidal natural transformation $\netflow\maps \FG\Rightarrow \Conc$ whose components $\netflow_N\maps \FG(N)\to\Conc(N)$ take a flow network $G\coloneqq (E,s,t,\ell,b)$ on $N$ vertices to the concave function
    \begin{equation}
        \lambda\mapsto \inf_{x\in\R^E}\sum_{e\in E}\ell(e)(x_e) + \lambda^T(Ax-b),
    \end{equation}
    where $A$ is the node incidence matrix of $G$.
\end{theorem}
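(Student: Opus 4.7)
The plan is to verify the three requirements needed for $\netflow$ to be a monoidal natural transformation $\FG\Rightarrow\Conc$: (i) that each $\netflow_N(G)$ actually lies in $\Conc(N)$, (ii) naturality with respect to every $\phi\maps N\to M$ in $\FinSet$, and (iii) compatibility with the unit and product comparisons. For (i), note that for each fixed $x\in\R^E$ the expression $\sum_{e\in E}\ell(e)(x_e)+\lambda^T(Ax-b)$ is affine in $\lambda$; therefore its pointwise infimum over $x$ is concave, so $\netflow_N(G)\in\Conc(N)$.

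The structural lemma that makes naturality work is the matrix identity $A'=\mathcal{M}(\phi_*)\cdot A$, where $A$ is the node-incidence matrix of $G=(E,s,t,\ell,b)$ and $A'$ is the node-incidence matrix of $\FG(\phi)(G)=(E,\phi\circ s,\phi\circ t,\ell,\phi_*(b))$. I would prove this by a brief case analysis on an arbitrary edge $e$: when $\phi$ keeps the endpoints distinct, both matrices place $\pm 1$ at the corresponding rows; when $e$ is already a self-loop, both columns are zero; and when $\phi$ identifies $s(e)$ with $t(e)$, the two nonzero entries of $A$ get summed by the pushforward to $0$, which is exactly the self-loop convention of $A'$. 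Together with the duality $\mathcal{M}(\phi^*)=\mathcal{M}(\phi_*)^T$ from Example~\ref{ex:push-pull}, this identity makes the naturality square commute: the top path yields
\[
\mu\mapsto\inf_{x\in\R^E}\sum_{e\in E}\ell(e)(x_e)+\mu^T\bigl(A'x-\phi_*(b)\bigr),
\]
and the bottom path, which sends $q_G$ to $q_G\circ\phi^*$, yields
\[
\mu\mapsto\inf_{x\in\R^E}\sum_{e\in E}\ell(e)(x_e)+\mu^T\phi_*\bigl(Ax-b\bigr),
\]
which coincides with the top path by the structural lemma.

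For (iii), the product comparison reduces to the observation that the disjoint union $G_1+G_2$ has a block-diagonal node-incidence matrix $\mathrm{diag}(A_1,A_2)$, pair-labelling $[\ell_1,\ell_2]$, and supply vector $[b_1,b_2]$. Consequently, the Lagrangian splits as a sum of two terms depending on disjoint edge variables $x_1$ and $x_2$, and the infimum factors:
\[
q_{G_1+G_2}(\lambda_1,\lambda_2)=q_{G_1}(\lambda_1)+q_{G_2}(\lambda_2),
\]
which is exactly the value of $\varphi^{\Conc}_{N,M}(q_{G_1},q_{G_2})$. The unit comparison is immediate because the unique flow network on $\emptyset$ has $E=\emptyset$ and $b=0$, making $q$ the zero function on $\R^\emptyset$, which agrees with the unit comparison of $\Conc$.

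The main obstacle will be the careful bookkeeping around the self-loop case in the matrix identity $A'=\mathcal{M}(\phi_*)\cdot A$, since it is the only place where the functoriality of node-incidence under vertex identification is non-trivial. A secondary subtlety is ensuring $q_G$ is a genuine real-valued concave function rather than attaining $-\infty$; this can be handled either by working with proper concave functions, by assuming the $\ell(e)$ are bounded below (as is standard in the MCNF literature), or by tacitly enlarging $\Conc(N)$ to include extended-real-valued concave functions. Once these points are dispatched, all remaining verifications reduce to the same linear-algebraic manipulations already used in the proof of Theorem~\ref{thm:flow}.
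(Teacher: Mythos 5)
Your proposal matches the paper's proof in essence: the core of both arguments is the identity $A' = \mathcal{M}(\phi_*)\,A$ between the incidence matrix of the relabelled network and the pushforward of the original one (established by the same case analysis on how $\phi$ treats an edge's endpoints, including the collapse-to-self-loop case), combined with duality of pushforward and pullback for naturality, block-diagonal separability of the disjoint union for the product comparison, and the zero function for the unit. Your added remark about $q_G$ potentially taking the value $-\infty$ is a reasonable observation about a point the paper leaves implicit, but it does not change the structure of the argument.
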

\begin{proof}
    To verify naturality, we need to show that the diagram
\[\begin{tikzcd}
	{\FG(N)} && {\FG(M)} \\
	\\
	{\Conc(N)} && {\Conc(M)}
	\arrow["{\FG(\phi)}", from=1-1, to=1-3]
	\arrow["{\Conc(\phi)}"', from=3-1, to=3-3]
	\arrow["{\netflow_N}"{description}, from=1-1, to=3-1]
	\arrow["{\netflow_M}"{description}, from=1-3, to=3-3]
\end{tikzcd}\]
    commutes for all $N,M\in\FinSet$ and $\phi\maps N\to M$. Let $(E,s,t,\ell,b)\in\FG(N)$ be arbitrary, and let $K=\mathcal{M}(\phi^*)$. Because the pushforward is dual to the pullback, we know that $K^T=\mathcal{M}(\phi_*)$.  Now, following the top path results in the function
    \begin{equation}
        \lambda\mapsto \inf_{x\in\R^E}\sum_{e\in E}\ell(e)(x_e)+\lambda^T(Ax-K^Tb),
    \end{equation}
    where $A\in\R^{M\times E}$ is defined by
    \begin{equation}
        A_{e,m}\coloneqq \begin{cases}
            0 & \text{ if } \phi(s(e))= \phi(t(e)) = m \\
            1 & \text{ if } \phi(s(e))=m \\
            -1 & \text{ if }  \phi(t(e))=m \\
            0 & \text{ otherwise.}
        \end{cases}
    \end{equation}
    Similarly, following the bottom path yields the function
    \begin{equation}
        \lambda\mapsto \inf_{x\in\R^E}\sum_{e\in E}\ell(e)(x_e)+(K\lambda)^T(Cx - b) = \inf_{x\in\R^E}\sum_{e\in E}\ell(e)(x_e)+\lambda^T(K^TCx - K^Tb),
    \end{equation}
    where $C\in\R^{N\times E}$ is defined by
    \begin{equation}
        C_{e,n}\coloneqq \begin{cases}
            0 & \text{ if } s(e)= t(e) = n \\
            1 & \text{ if } s(e)=n \\
            -1 & \text{ if }  t(e)=n \\
            0 & \text{ otherwise.}
        \end{cases}
    \end{equation}
    Thus, to complete the proof, it suffices to show that $A = K^TC$. To see why this is the case, first note that $A$ and $C$ both have $|E|$ columns. So consider an arbitrary $e\in E$. We just need to show that $a_e = K^Tc_e \coloneqq \phi_*(c_e)$, where $a_e$ and $c_e$ are the $e^{th}$ columns of $A$ and $C$, respectively. Letting $m\in M$ be arbitrary and using the definition of pushforward, we know that
    \begin{equation}\label{eq:c_e}
        \phi_*(c_e)(m) \coloneqq \sum_{n\in\phi^{-1}(m)}c_e(n) = \sum_{n\in\phi^{-1}(m)}\begin{cases}
            0 & \text{ if } s(e)= t(e) = n \\
            1 & \text{ if } s(e)=n \\
            -1 & \text{ if }  t(e)=n \\
            0 & \text{ otherwise.}
        \end{cases}
    \end{equation}
    We now consider the three possibilities for the value of $a_e(m)$ and show that in every case they must equal the value of $\phi_*(c_e)(m)$.

    First, suppose that $a_e(m) = 1$. Then by definition of $A$, we know that $\phi(s(e))=m$ and $\phi(t(e))\neq m$. These equations imply that $s(e)\in\phi^{-1}(m)$ and $t(e)\notin\phi^{-1}(m)$, and thus \eqref{eq:c_e} says that $\phi_*(c_e)(m)$ must equal $1$. A similar argument shows that if $a_e(m) = -1$, then $\phi_*(c_e)(m) = -1$.

    Finally, we must consider the case when $a_e(m) = 1$. There are two possibilities: either $s(e)\notin\phi^{-1}(m)$ \emph{and} $t(e)\notin\phi^{-1}(m)$, in which case \eqref{eq:c_e} says that $\phi_*(c_e)(m) = 0$, or $s(e)\in\phi^{-1}(m)$ \emph{and} $t(e)\in\phi^{-1}(m)$. In the latter case, by \eqref{eq:c_e}, $\phi_*(c_e)(m)\coloneqq 1 - 1 = 0$, as desired.

    To verify that the transformation is monoidal, we need to verify that the diagrams
\[\begin{tikzcd}
	{\FG(N)\times\FG(M)} && {} & {\Conc(N)\times\Conc(M)} \\
	{\FG(N+M)} &&& {\Conc(N+M)}
	\arrow["{\varphi_{N,M}^\FG}"', from=1-1, to=2-1]
	\arrow["{\netflow_N\times\netflow_M}", from=1-1, to=1-4]
	\arrow["{\varphi_{N,M}^\Conc}", from=1-4, to=2-4]
	\arrow["{\netflow_{N+M}}"', from=2-1, to=2-4]
\end{tikzcd}\]
    and
\[\begin{tikzcd}
	{\1} && {\FG(\emptyset)} \\
	&& {\Conc(\emptyset)}
	\arrow["{\netflow_\emptyset}", from=1-3, to=2-3]
	\arrow["{\varphi_0^\FG}", from=1-1, to=1-3]
	\arrow["{\varphi_0^\Conc}"', from=1-1, to=2-3]
\end{tikzcd}\]
    commute for all $N,M\in\FinSet$.

    For the first diagram, let $G_1\coloneqq (E_1,s_1,t_1,\ell_1,b_1)\in\FG(N)$ and $G_2\coloneqq (E_2,s_2,t_2,\ell_2,b_2)\in\FG(M)$ be arbitrary, and let $A_{G_1}$ and $A_{G_2}$ denote the node incidence matrices of $G_1$ and $G_2$, respectively. Then, following the top path results in the function
    \begin{equation}\label{eq:nf_mon_1}
        \lambda\mapsto \inf_{x\in\R^{E_1}}\sum_{e\in E_1}\ell_1(e)(x_e) + \pi_N(\lambda)^T(A_{G_1}x-b_1) + \inf_{x'\in\R^{E_2}}\sum_{e'\in E_2}\ell_2(e')(x'_{e'}) + \pi_M(\lambda)^T(A_{G_2}x'-b_2).
    \end{equation}
    Likewise, following the bottom path yields the function 
    \begin{equation}\label{eq:nf_mon_2}
        \lambda\mapsto \inf_{y\in\R^{E_1+E_2}}\sum_{e\in E_1+E_2}[\ell_1,\ell_2](e)(y_e) + \lambda^T\left(\begin{bmatrix}
            A_{G_1} & 0 \\ 0 & A_{G_2}
        \end{bmatrix}y - \begin{bmatrix}
            b_1 \\ b_2
        \end{bmatrix}\right).
    \end{equation}
    Noting that \eqref{eq:nf_mon_2} is separable along the components of $y$ in $\R^{E_1}$ and the components of~$y$ in $\R^{E_2}$ shows that it is equivalent to \eqref{eq:nf_mon_1}.

    For the second diagram, note that the node incidence matrix of an empty flow network is necessarily the unique linear map $\R^0\to\R^0$. Similarly, for an empty flow network $b$ must be $0\in\R^0$. With this case in mind, following the top and bottom paths both yield the constant zero function, as desired.
\end{proof}

    With this algebra morphism established, we can finally turn to the composite given by the following diagram:
\begin{equation}\label{eq:big_daddy}\begin{tikzcd}
	\FG && \Conc && \NDD && {\NDD_D}
	\arrow["{\netflow}", Rightarrow, from=1-1, to=1-3]
	\arrow["{\superg}", Rightarrow, from=1-3, to=1-5]
	\arrow["{\Euler^\gamma}", Rightarrow, from=1-5, to=1-7]
\end{tikzcd}\end{equation}
    This transformation takes a flow network $G$ to the discrete non-deterministic dynamical system that solves the minimum cost network flow problem on $G$ using supergradient ascent on the dual problem $q_G$. The results proven throughout this paper show that this composite respects any decomposition of $G$. In particular, if we completely decompose $G$ with a UWD where each inner box contains only a single edge, then applying the composite in \eqref{eq:big_daddy} yields the standard dual decomposition algorithm as in Section 2.A of \cite{jadbabaie_distributed_2009}.

    This decomposition algorithm is flat, in that it decomposes into a single master problem and scalar subproblems for every eadge. More interestingly, we can apply \eqref{eq:big_daddy} to other decomposition structures of $G$. For example, we could decompose $G$ by densely connected components. Each connected component can then be fully decomposed by edges to yield a hierarchical decomposition. The subsequent section illustrates how exploiting such higher-level compositional structure can be beneficial.

\section{Implementation and Numerical Results}\label{sec:impl}

This section discusses our prototype implementation of this framework in a Julia package called AlgebraicOptimization.jl\footnote{\url{https://github.com/AlgebraicJulia/AlgebraicOptimization.jl}}. We used this package to perform numerical experiments demonstrating that the exploitation of higher-level compositional structure of flow networks can result in a faster solver than standard dual decomposition for MCNF.

\subsection{The AlgebraicOptimization Software Architecture}

The framework developed in this paper consists of a collection of gradient-based algebra morphisms
\begin{equation}
    g\maps O\Rightarrow D,
\end{equation}
where $O$ is one of the optimization problem algebras and $D$ is one of the dynamical system algebras (visualized in Figure \ref{fig:hierarchy}). These algebra morphisms take optimization problems as inputs and produce iterative solution algorithms as outputs. Thus for each $g\maps O\Rightarrow D$, we require the following:
\begin{itemize}
    \item an implementation of $O$ as a UWD-algebra,
    \item an implementation of $D$ as a UWD-algebra,
    \item an implementation of the components of $g$.
\end{itemize}

The Catlab.jl software package for categorical computing \cite{halter2020compositional} provides a generic interface for specifying UWD-algebras, while AlgebraicDynamics.jl implements the $\Dynam$ and $\Dynam_D$ UWD-algebras as well as the $\Euler^\gamma$ algebra morphism \cite{libkind_operadic_2022}. So, for example, to specify $\gd^\gamma\maps \Opt\Rightarrow \Dynam_D$, we just needed to specify $\Opt$ as a UWD-algebra using Catlab and the components of $\flow\maps\Opt\Rightarrow\Dynam$, which can then be composed with $\Euler^\gamma$ to recover $\gd^\gamma$. The gradient flow dynamical system for a given objective function is straightforward to implement thanks to Julia's extensive support for automatic differentiation with packages such as ForwardDiff.jl \cite{RevelsLubinPapamarkou2016}. 

The ease with which we can implement this framework illustrates one of the benefits of formalizing these optimization problems with category theory: with a package such as Catlab for categorical computing, the translation of mathematical definitions to usable software is straightforward. Moreover, thanks to the intuitive graphical synatx of UWDs, an application-oriented user of AlgebraicOptimization need not be concerned with the underlying category theory
to benefit from these results. They can instead simply specify a UWD and subproblems to fill each inner box, and AlgebraicOptimization will automatically generate a 
decomposed solution method for their problem. The user can also specify whether the solution method is serial (distribute to subsystems, update subsystems sequentially, then collect from subsystems), or parallel (distribute to subsystems, update subsystems in parallel, then collect from subsystems). 





\subsection{Compositional Network Flow Experiments}
\begin{figure}[htbp]
    \centering
    \includegraphics[width=\textwidth]{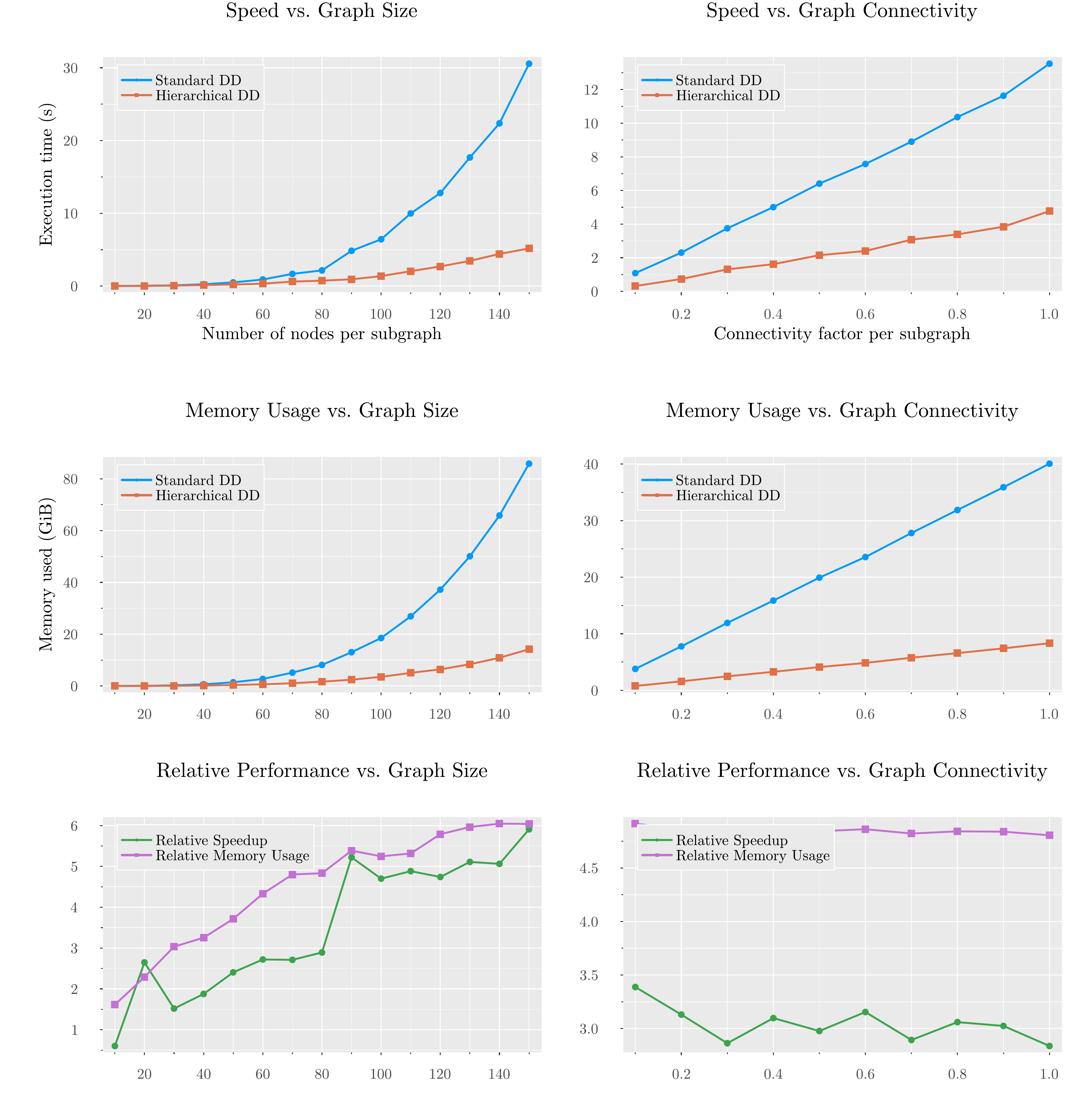}
    \caption{Benchmark results of standard dual decomposition and hierarchical dual decomposition on random composite flow networks. The left column reports results for varying the number of nodes in each components subgraph from 10 to 150 while holding connectivity fixed at 0.2. The right column reports results for varying the graph connectivity parameter of each subgraph from 0.1 to 1.0 while holding the number of nodes per subgraph fixed at 80. The top row compares execution times, the second row compares total memory used, and the bottom row charts the ratios between standard and hierarchical dual decomposition for each of these quantities. 
    }
    \label{fig:benchmarks}
\end{figure}

Intuitively, we expect that compositional modeling of MCNF problems will be practically useful because many real world applications of network flow involve flow networks with several densely connected components and sparse connectivity among these components. For example, a traffic flow network may model several densely connected urban areas coupled by relatively few interstates and highways. This is an example of hierarchical compositional structure where the first level decomposes each urban area from the others and the second level decomposes the edges within each urban area.

Motivated by such applications, we conducted a brief experimental study to measure the performance of standard dual decomposition against the hierarchical dual decomposition enabled by our framework when solving the minimum cost network flow problem. We worked with flow networks whose compositional structure was given by the UWD $\Phi\maps [3]+[2]+[2]\to\emptyset$ in Figure \ref{fig:flow-graphs}. The graphs were randomly generated to avoid implicitly favoring any particular solution method. More specifically, to generate test data for this algorithm, we built three component flow networks $(G_1,G_2,G_3)$ by starting with three connected Erd\H{o}s-R\'enyi random graphs \cite{Erdos1959OnRG}, assigning to each edge a random convex quadratic cost function, and to the vertices a random vector of inflows and outflows which summed to 0. We then randomly designated 3 boundary vertices for $G_1$ and 2 boundary vertices for each of $G_2$ and $G_3$. Thus constructed, $G_1$ fills the box of $\Phi$ with three ports, and $G_2$ and $G_3$ fill the boxes with two ports. Given $(G_1, G_2, G_3)$, we examined two ways to proceed towards solving the corresponding MCNF problem:

\begin{enumerate}
    \item (Standard Dual Decomposition) This algorithm is given by first composing component subgraphs with $\FG$ and then taking the dual decomposition optimizer of the composite graph. 
    \item (Hierarchical Dual Decomposition) This algorithm is given by taking the dual decomposition optimizer of each component subgraph and composing these systems with $\Dynam_D$.
\end{enumerate}

Standard dual decomposition corresponds to following the top path of the following diagram while hierarchical dual decomposition corresponds to following the bottom path.
\[\begin{tikzcd}
	{\FG([3])\times\FG([2])\times\FG([2])} &&& {\FG(\emptyset)} \\
	{\DiffConc([3])\times\DiffConc([2])\times\DiffConc([2])} &&& {\DiffConc(\emptyset)} \\
	{\Dynam([3])\times\Dynam([2])\times\Dynam([2])} &&& {\Dynam(\emptyset)} \\
	{\Dynam_D([3])\times\Dynam_D([2])\times\Dynam_D([2])} &&& {\Dynam_D(\emptyset)}
	\arrow["{\FG(\Phi)}", from=1-1, to=1-4]
	\arrow["{\netflow_\emptyset}", from=1-4, to=2-4]
	\arrow["\flow_\emptyset", from=2-4, to=3-4]
	\arrow["{\Euler^\gamma_\emptyset}", from=3-4, to=4-4]
	\arrow["{\netflow_{[3]}\times\netflow_{[2]}\times\netflow_{[2]}}"', from=1-1, to=2-1]
	\arrow["{\flow_{[3]}\times\flow_{[2]}\times\flow_{[2]}}"', from=2-1, to=3-1]
	\arrow["{\Euler^\gamma_{[3]}\times\Euler^\gamma_{[2]}\times\Euler^\gamma_{[2]}}"', from=3-1, to=4-1]
	\arrow["{\Dynam_D(\Phi)}"', from=4-1, to=4-4]
\end{tikzcd}\]
Note that for all experiments performed, we used strictly convex cost functions for each edge, implying that the dual function of a flow network generated by the $\netflow$ transformation is differentiable. We were therefore able to use standard gradient flow instead of supergradient flow. 
Also recall that the algebras in this diagram represent the UWD-algebras of the corresponding finset algebras defined in this paper. So for example, $\FG(\emptyset)$ represents the set of all open flow networks with empty boundary, rather than the empty flow network.
Since the above diagram is the naturality square of a composite of natural transformations, the dynamical systems generated by following the top and bottom paths are equivalent in that they follow the same trajectory.

Given these two methods of building solvers from problem data, a fair comparison between them must also account for relevant properties of the random graphs. In particular, one might expect that the size, i.e. number of nodes and edges, in the component graphs would affect execution time and memory usage of each algorithm. As such we sought to compare both standard and hierarchical dual decomposition using graphs of various sizes. More specifically, we conducted two experiments to compare standard dual decomposition and hierarchical dual decomposition. First note that an Erd\H{o}s-R\'enyi random graph $G(N,p)$ is parameterized by two values: $N\geq 0$ is the number of vertices in the graph and $0\leq p\leq1$ represents the probability that a given edge is included in the graph. Therefore, increasing $p$ increases the connectivity of $G(N,p)$. For the first experiment, we held $p$ at the fixed value of 0.2 and varied the number of nodes for each component subgraph. Conversely, for the second experiment, we held the number of nodes of each component subgraph fixed at 80 and varied $p$. For each selection of parameters, we measured the elapsed time and total memory consumption to run 10 outer iterations of dual decomposition. We used the Optim.jl package \cite{mogensen2018optim} to solve the inner problems of computing optimal flows for edges given fixed values of the dual variables.

The results of these experiments are summarized in Figure \ref{fig:benchmarks}. These results clearly demonstrate the benefits of hierarchical dual decomposition over standard dual decomposition when hierarchical structure exists. Note that we do not compare against state of the art MCNF algorithms, but rather aim to show some practical utility of hierarchical structure.  In particular, the advantages of hierarchical decomposition are more pronounced on dense graphs with many nodes. This is expected because although both algorithms must solve the same number of scalar minimization problems (one for each edge), these minimizations can be computed more efficiently in the hierarchical case because they are only parameterized by the dual variables of the subgraph they belong to, rather than all the dual variables in the case of standard dual decomposition.


\section{Conclusion}\label{sec:conclusion}

\subsection*{Summary}
We have presented a novel algebraic/structural perspective on first-order optimization decomposition algorithms using the abstractions of undirected wiring diagrams (UWDs), operad algebras, and algebra morphisms. 
Specifically, we demonstrated that whenever a first-order method defines an algebra morphism from a UWD-algebra of problems to a UWD-algebra of dynamical systems, that method decomposes problems defined on arbitrary UWDs and provides distributed message-passing solution algorithms. We leveraged this paradigm to 
unify and generalize many known algorithms within a common framework, including gradient descent (Theorem \ref{thm:flow}), Uzawa's algorithm (Theorem \ref{thm:uzawa}), primal and dual decomposition (Examples \ref{ex:primal_decomp} and \ref{ex:dual-decomp}), and subgradient variants of each (Corollary \ref{thm:pdsubg}). Working within this framework allowed the statement of the compositional data condition (Definition \ref{def:cdc}), which is a novel sufficient condition for when optimization problems parameterized by compositional data are decomposable. We subsequently showed that the minimum cost network flow problem satisfied the compositional data condition by defining an algebra morphism from a UWD-algebra of flow networks to the UWD-algebra of concave maximization problems. Experimental results using our implementation of this compositional framework highlighted the computational benefits afforded by the ability to exploit hierarchical and compositional problem structure. 

\subsection*{Impacts}

We believe this framework will have many positive impacts for both optimization theory and practice. Most immediately, our implementation of this framework provides a convenient tool for engineers to compositionally build complex optimization problems in a hierarchical fashion using the intuitive graphical syntax of undirected wiring diagrams. Such specifications are then automatically transformed into performant distributed solution algorithms by leveraging the algebra morphisms described in this paper. We also hope that optimization researchers can utilize this general unified language for optimization decomposition methods to more easily build novel algorithms for specialized applications (e.g. by finding an application for a hierarchical combination of both primal and dual decomposition). Furthermore, the compositional data condition provides a principled way of proving when problems defined by compositional data are decomposable and generating appropriate distributed solution algorithms. We hope that this condition can be used to derive hierarchical decomposition methods for many more applications beyond the network flow example presented in the paper.

In terms of theoretical impacts, this work has situated optimization decomposition methods in the broad mathematical landscape of category theory. Seen in this light, the underlying mathematical structure of optimization decomposition methods is the same as or similar to the structures underlying entropy maximization of thermostatic systems \cite{Baez_thermo_2023} and equilibrium behavior of Markov processes \cite{Baez_2016_markov} in that both also tell stories of operadic dynamical systems. Also of interest is that backpropagation in neural networks has been shown to exhibit semantics for \emph{directed} wiring diagrams \cite{fong2019backprop}. In making such connections we hope to facilitate the transfer of ideas amongst optimization and other disciplines. Additionally, we believe that the category theoretic point of view will lead to the expansion of problems considered by optimization researchers. Indeed constructs already exist that allow one to generalize gradient based optimization to settings beyond differentiable functions over the real numbers \cite{cruttwell2022categorical, shiebler_generalized_2022}, for example to polynomials over arbitrary finite fields or boolean circuits. Our work has the potential to add composition mechanisms to instances of such non-traditional optimization problems.

\subsection*{Limitations and Future Work}

There are many directions for future work. The first natural question is whether other, more sophisticated decomposition methods such as the alternating direction method of multipliers (ADMM) or second-order methods such as Newton's method can be expressed within our framework. We are hopeful that an affirmative answer can be given for both of these methods. For example, \cite{hallac_snapvx_nodate} defines a way to apply ADMM to decompose problems defined on factor graphs, which are similar to UWDs, and \cite{jadbabaie_distributed_2009} defines a distributed Newton's method for solving minimum cost network flow.

There are also some fundamental limitations of our approach which we aim to address in future work. For example, Euler's method (and by extension gradient descent) is only an algebra morphism for a fixed step-size. However, many optimization algorithms use more sophisticated methods such as backtracking line search for computing appropriate step-sizes at each iteration. As such, we would like to develop a theory of \emph{approximate} algebra morphisms and see if we can bound the approximation error based on the compositional structure of a given problem.

Another limitation of our framework is that the distributed solution algorithms generated by applying our algebra morphisms are \emph{synchronous}, meaning they require a blocking synchronization step after every parallel computation step. In reality, many algorithms such as distributed gradient descent and Uzawa's can be run asynchronously while still guaranteeing convergence \cite{hendrickson2022totally}. We thus would like to extend the existing algebraic theory of composite dynamical systems to encompass such asynchrony.

\subsection*{Acknowledgements}

Tyler Hanks was supported by the National Science Foundation Graduate
Research Fellowship Program under Grant No. DGE-1842473. Any opinions, findings,
and conclusions or recommendations expressed in this material are those of the author(s)
and do not necessarily reflect the views of the NSF. 
Matthew Hale was supported by the Air Force Office of
Scientific Research (AFOSR) under Grant No. FA9550-23-1-0120. 
 Matthew Klawonn was funded by a SMART SEED grant. James Fairbanks was supported by DARPA under award no. HR00112220038.






\bibliographystyle{unsrt}
\bibliography{main}

\appendix
\section{Category Theory Definitions}\label{sec:app}

\subsection{Categories, Functors, and Natural Transformations}\label{sec:ct-basics}

\begin{definition}
\label{def:cat}
    A \define{category} $\mathcal{C}$ consists of a collection of \emph{objects} $X,Y,Z,\dots$, and a collection of \emph{morphisms} (also called \emph{arrows}) $f,g,h, \dots$, together with the following data: 
    \begin{itemize}
        \item Each morphism has a specified \emph{domain} and \emph{codomain} object; the notation $f:X\rightarrow Y$ signifies that $f$ is a morphism with domain $X$ and codomain $Y$.
        \item Each object $X\in \mathcal{C}$ has a distinguished \emph{identity} morphism $\text{id}_X:X\rightarrow X$.
        \item For any triple of objects $X,Y,Z\in \mathcal{C}$ and any pair of morphisms $f:X\rightarrow Y$ and $g:Y\rightarrow Z$, there is a specified \emph{composite} morphism $g\circ f:X\rightarrow Z$.
    \end{itemize}
    This data is subject to the following two axioms:
    \begin{itemize}
        \item (unitality) For any morphism $f:X\rightarrow Y$, we have $f\circ \text{id}_X = f = \text{id}_Y\circ f$,
        \item (associativity) For any objects $W,X,Y,Z\in\mathcal{C}$ and morphisms $f:W\rightarrow X$, $g:X\rightarrow Y$, $h:Y\rightarrow Z$, we have $h\circ(g\circ f) = (h\circ g)\circ f$.
    \end{itemize}
\end{definition}

\begin{definition}
    Given a category $\mathcal{C}$, the opposite category, denoted $\mathcal{C}^{op}$, has 
    \begin{itemize}
        \item the same objects as $\mathcal{C}$,
        \item a morphism $f^{op}\maps Y \to X$ for every morphism $f\maps X \to Y$ in $\mathcal{C}$.
    \end{itemize}
    Identities are the same as those in $\mathcal{C}$ and composition is defined by composition in $\mathcal{C}$:
    \begin{equation}
        g^{op}\circ f^{op} = (f\circ g)^{op}.
    \end{equation}
\end{definition}

\begin{definition}\label{def:functor}
    Given categories $\mathcal{C}$ and $\mathcal{D}$, a (covariant) \define{functor} $F:\mathcal{C}\rightarrow \mathcal{D}$ consists of the following maps:
    \begin{itemize}
        \item an \define{object map} $c\mapsto F(c)$ for all $c\in \textnormal{Ob}(\mathcal{C})$,
        \item a \define{morphism map} respecting domains and codomains, i.e., $(f: c \to c') \mapsto (F(f): F(c) \to F(c'))$ for every morphism $f\maps c\to c'$ in $\mathcal{C}$.
    \end{itemize}
    These maps must satisfy the following equations:
    \begin{itemize}
        \item $F(\text{id}_c) = \text{id}_{F(c)}$ (identities are preserved),
        \item $F(g\circ f) = F(g)\circ F(f)$ (composition is preserved),
    \end{itemize}
    for all objects $c$ and composable morphisms $f,g$ in $\mathcal{C}$. A \define{contravariant functor} is a functor $F\maps \CC^{op}\to \DD$ whose domain is an opposite category.
\end{definition}

\begin{definition}
    Given a pair of functors $F,G\maps \CC\to \DD$ with common domain and codomain, a natural transformation $\theta$ from $F$ to $G$, denoted $\theta\maps F\Rightarrow G$ consists of a family of morphisms in $\DD$ indexed by objects in $\CC$, namely
    \begin{equation}
        \theta_X\maps F(X)\to G(X)
    \end{equation}
    for all $X\in \CC$. These are known as the \define{components} of $\theta$. These components are required to make the following \define{naturality squares} commute for all morphisms objects $X,Y$ and morphisms $f\maps X\to Y$ in $\CC$.
\[\begin{tikzcd}
	{F(X)} && {F(Y)} \\
	{G(X)} && {G(Y)}
	\arrow["{F(f)}", from=1-1, to=1-3]
	\arrow["{G(f)}"', from=2-1, to=2-3]
	\arrow["{\theta_X}"', from=1-1, to=2-1]
	\arrow["{\theta_Y}", from=1-3, to=2-3]
\end{tikzcd}\]
    When each component of $\theta$ is an isomorphism, we call $\theta$ a \define{natural isomorphism}.
\end{definition}

\subsection{Universal Constructions}\label{sec:limits}

One of the core benefits of category theory is the ability to study an object in terms of its relationships to other objects in a category. This line of reasoning leads to very general constructions that can be formulated in any category and prove useful when specialized to familiar categories.

\begin{definition}
    An \define{initial object} in a category $\mathcal{C}$ is an object $i\in\mathcal{C}$ such that for all objects $c\in\mathcal{C}$, there exists a unique morphism $i\to c$ in $\mathcal{C}$.
    Dually, a \define{terminal object} in $\mathcal{C}$ is an initial object in $\CC^{op}$, i.e., an object $t\in\mathcal{C}$ such that for all objects $c\in\mathcal{C}$, there exists a unique morphism $c\to t$ in $\mathcal{C}$.

\end{definition}
\begin{itemize}
    \item The initial object in $\Set$ is the empty set while the terminal object\footnote{It is easy to verify that initial and terminal objects (and more generally any objects defined by universal properties) are unique up to isomorphism, so we are justified in using the terminology of ``\emph{the} initial/terminal object"} in $\Set$ is the singleton set $\1$.
    \item The vector space $\R^0$ is both the initial and the terminal object in $\Vect$.
\end{itemize}

In the following definition, a \define{span} in a category simply refers to a pair of morphisms in that category with common domain. Dually, a \define{cospan} in a category is a pair of morphisms with shared codomain.

\begin{definition}
    The \define{product} of objects $X$ and $Y$ in a category $\mathcal{C}$ is a span of the form $X\xleftarrow{\pi_X} X\times Y\xrightarrow{\pi_Y} Y$ such that for any other span of the form $X\xleftarrow{f} Z\xrightarrow{g} Y$, there exists a unique morphism $Z\xrightarrow{!} X\times Y$ such that $\pi_X\circ ! = f$ and $\pi_Y\circ ! = g$. The unique morphism is called the \define{pairing} of $f$ and $g$ and denoted by $\langle f,g\rangle$.
Dually, the \define{coproduct} of objects $X$ and $Y$ in a category $\mathcal{C}$ is the product of $X$ and $Y$ in $\mathcal{C}^{op}$. This dual nature is clearly visualized by the following pair of diagrams, where the left diagram represents a product in $\mathcal{C}$ and the right diagram represents a coproduct in $\mathcal{C}$.
\[\begin{tikzcd}
	X &&&& X \\
	& {X\times Y} && Z && {X+Y} && Z \\
	Y &&&& Y
	\arrow["{\pi_X}", from=2-2, to=1-1]
	\arrow["{\pi_Y}"', from=2-2, to=3-1]
	\arrow[""{name=0, anchor=center, inner sep=0}, "f"', curve={height=6pt}, from=2-4, to=1-1]
	\arrow[""{name=1, anchor=center, inner sep=0}, "g", curve={height=-6pt}, from=2-4, to=3-1]
	\arrow["{\exists !}"', dashed, from=2-4, to=2-2]
	\arrow["{\iota_X}"', from=1-5, to=2-6]
	\arrow["{\iota_Y}", from=3-5, to=2-6]
	\arrow["{\exists !}", dashed, from=2-6, to=2-8]
	\arrow[""{name=2, anchor=center, inner sep=0}, "f", curve={height=-6pt}, from=1-5, to=2-8]
	\arrow[""{name=3, anchor=center, inner sep=0}, "g"', curve={height=6pt}, from=3-5, to=2-8]
\end{tikzcd}\]
The unique arrow $X+Y\to Z$ in the coproduct diagram is called the \define{copairing} of $f$ and $g$, and denoted $[f,g]$.
\end{definition}
\begin{itemize}
    \item The product in $\Set$ is the Cartesian product of sets together with the natural projection maps. The coproduct in $\Set$ is the disjoint union of sets together with the natural inclusion maps.
    \item The product in $\Vect$ is the Cartesian product of vector spaces while the coproduct is the direct sum of vector spaces. In finite dimensions, the product and direct sum are isomorphic, meaning $\Vect$ is a special type of category called a biproduct category.
\end{itemize}

\begin{definition}\label{def:push_pull}
    The \define{pullback} of a cospan $X\xrightarrow{f}Z\xleftarrow{g} Y$ in a category $\CC$ is a span $X\xleftarrow{p_X} P\xrightarrow{p_Y}Y$ such that $f\circ p_X = g\circ p_Y$ and for any other span $X\xleftarrow{q_X} Q\xrightarrow{q_Y}Y$, there exists a unique morphism $Q\to P$ making the following diagram commute.
\[\begin{tikzcd}
	Q \\
	& P & Y \\
	& X & Z
	\arrow["f"', from=3-2, to=3-3]
	\arrow["g", from=2-3, to=3-3]
	\arrow["{p_Y}", from=2-2, to=2-3]
	\arrow["{p_X}"', from=2-2, to=3-2]
	\arrow["{\exists!}"', dashed, from=1-1, to=2-2]
	\arrow["{q_Y}", curve={height=-12pt}, from=1-1, to=2-3]
	\arrow["{q_X}"', curve={height=12pt}, from=1-1, to=3-2]
\end{tikzcd}\]
    The pullback object $P$ is typically denoted $X\times_Z Y$.
    
    Dually, the \define{pushout} of a span $X\xleftarrow{f}Z\xrightarrow{g}Y$ is a cospan $X\xrightarrow{i_X} P\xleftarrow{i_Y}Y$ such that $i_X\circ f=i_Y\circ g$ and for any other cospan $X\xrightarrow{q_X} Q\xleftarrow{q_Y} Y$, there exists a unique morphism $P\to Q$ making the following diagram commute.
\[\begin{tikzcd}
	Q \\
	& P & Y \\
	& X & Z
	\arrow["f"', tail reversed, no head, from=3-2, to=3-3]
	\arrow["g", tail reversed, no head, from=2-3, to=3-3]
	\arrow["{i_Y}", tail reversed, no head, from=2-2, to=2-3]
	\arrow["{i_X}"', tail reversed, no head, from=2-2, to=3-2]
	\arrow["{\exists!}"', dashed, tail reversed, no head, from=1-1, to=2-2]
	\arrow["{q_Y}", curve={height=-12pt}, tail reversed, no head, from=1-1, to=2-3]
	\arrow["{q_X}"', curve={height=12pt}, tail reversed, no head, from=1-1, to=3-2]
\end{tikzcd}\]
    The pushout object $P$ is typically denoted $X+_Z Y$.
\end{definition}
\begin{itemize}
    \item The pullback of a cospan $X\xrightarrow{f} Z\xleftarrow{g} Y$ in $\Set$ is the set $X\times_Z Y\coloneqq \{(x,y)\in X\times Y \mid f(x)=g(y)\}$ together with the projections from $X\times Y$ restricted to $X\times_Z Y$.
    \item The pushout of a span $X\xleftarrow{f} Z\xrightarrow{g} Y$ in $\Set$ is the set $X+_Z Y\coloneqq X+Y/\sim$ where $\sim$ is the equivalence relation $f(z)\sim g(z)$ for all $z\in Z$. The pushout also includes the maps $i_X\maps X\to X+_Z Y$ and $i_Y\maps Y\to X+_Z Y$ making $i_X\circ f = i_Y\circ g$.
\end{itemize}

\subsection{Symmetric Monoidal Categories}\label{sec:smcs}

\begin{definition}\label{def:smc}
    A \define{symmetric monoidal category} consists of a category $\CC$ equipped with a functor $\otimes\maps\CC\times \CC\to \CC$ called the \define{monoidal product} and an object $1\in\CC$ called the \define{monoidal unit}, together with the following natural isomorphisms:
    \begin{itemize}
        \item (associator) $\alpha_{X,Y,Z}\maps (X\otimes Y)\otimes Z \cong X\otimes (Y\otimes Z)$,
        \item (left unitor) $\lambda_X\maps 1\otimes X\cong X$,
        \item (right unitor) $\rho_X\maps X\otimes 1\cong X$.
    \end{itemize}
    This data must satisfy two families of commutative diagrams called the triangle and pentagon identities which we omit for brevity.
\end{definition}

\begin{remark}
    When a category $\CC$ has finite products and a terminal object, it automatically forms a symmetric monoidal category by taking the monoidal product of morphisms $f\maps X\to Y$ and $g\maps W\to Z$ to be $\langle f\circ \pi_X, g\circ\pi_W\rangle\maps X\times W\to Y\times Z$ as in the following diagram:
\[\begin{tikzcd}
	& X && Y \\
	{X\times W} && {Y\times Z} \\
	& W && {Z.}
	\arrow["{\pi_X}", from=2-1, to=1-2]
	\arrow["{\pi_W}"', from=2-1, to=3-2]
	\arrow["{\pi_Y}"', from=2-3, to=1-4]
	\arrow["{\pi_Z}", from=2-3, to=3-4]
	\arrow["f", from=1-2, to=1-4]
	\arrow["g"', from=3-2, to=3-4]
	\arrow["{\langle f\circ\pi_X,g\circ\pi_W\rangle}", dashed, from=2-1, to=2-3]
\end{tikzcd}\]
    This is known as a \define{Cartesian monoidal category}. For example, $(\Set,\times, 1)$ is cartesian monoidal.

    Dually, when a category has finite coproducts and an initial object, it automatically forms a symmetric monoidal category by taking the monoidal product of morphisms $f\maps X\to Y$ and $g\maps W\to Z$ to be $[\iota_Y\circ f,\iota_Z\circ g]$ as in the following diagram:
\[\begin{tikzcd}
	X && Y \\
	& {X+W} && {Y+Z.} \\
	W && {Z}
	\arrow["{\iota_X}"', from=1-1, to=2-2]
	\arrow["{\iota_W}", from=3-1, to=2-2]
	\arrow["{[\iota_Y\circ f,\iota_Z\circ g]}", dashed, from=2-2, to=2-4]
	\arrow["f", from=1-1, to=1-3]
	\arrow["g"', from=3-1, to=3-3]
	\arrow["{\iota_Y}", from=1-3, to=2-4]
	\arrow["{\iota_Z}"', from=3-3, to=2-4]
\end{tikzcd}\]
    This is known as a \define{co-Cartesian monoidal category}. For example, $(\FinSet, +, \emptyset)$ is co-Cartesian monoidal.
\end{remark}

\begin{definition}\label{smf}
    Given symmetric monoidal categories $(\CC, \otimes, 1)$ and $(\DD, \boxtimes, e)$, a \define{lax symmetric monoidal functor} $(\CC, \otimes, 1)\to (\DD,\boxtimes, e)$ consists of a functor $F\maps \CC\to \DD$ together with the following \define{comparison maps}:
    \begin{itemize}
        \item a morphism $\varphi_0\maps e\to F(1)$ called the \define{unit comparison},
        \item a natural transformation called the \define{product comparison} with components
        \begin{equation}
            \varphi_{X,Y}\maps F(X)\boxtimes F(Y)\to F(X\otimes Y)
        \end{equation}
        for all $X,Y\in \CC$.
    \end{itemize}
    This data must satisfy the following axioms.
    \begin{enumerate}
        \item (associativity) For all objects $X,Y,Z\in\CC$, the following diagram commutes.
\[\begin{tikzcd}
	{(F(X)\boxtimes F(Y))\boxtimes F(Z)} && {F(X)\boxtimes (F(Y)\boxtimes F(Z))} \\
	{F(X\otimes Y)\boxtimes F(Z)} && {F(X)\boxtimes F(Y\otimes Z)} \\
	{F((X\otimes Y)\otimes Z)} && {F(X\otimes (Y\otimes Z))}
	\arrow["{\alpha^\DD_{FX,FY,FZ}}", from=1-1, to=1-3]
	\arrow["{\varphi_{X,Y}\boxtimes\id}"', from=1-1, to=2-1]
	\arrow["{\varphi_{X\otimes Y, Z}}"', from=2-1, to=3-1]
	\arrow["{F(\alpha^\CC_{X,Y,Z})}"', from=3-1, to=3-3]
	\arrow["{\id\otimes\varphi_{Y,Z}}", from=1-3, to=2-3]
	\arrow["{\varphi_{X,Y\otimes Z}}", from=2-3, to=3-3]
\end{tikzcd}\]
        \item (unitality) For all $X\in \CC$, the following pair of diagrams commutes.
\[\begin{tikzcd}
	{e\boxtimes F(X)} && {F(1)\boxtimes F(X)} && {F(X)\boxtimes e} && {F(X)\boxtimes F(1)} \\
	{F(X)} && {F(1\otimes X)} && {F(X)} && {F(X\otimes 1)}
	\arrow["{\lambda_{F(X)}^\DD}"', from=1-1, to=2-1]
	\arrow["{\varphi_0\boxtimes\id}", from=1-1, to=1-3]
	\arrow["{\varphi_{1,X}}", from=1-3, to=2-3]
	\arrow["{F(\lambda_X^\CC)}", from=2-3, to=2-1]
	\arrow["{\id\boxtimes\varphi_0}", from=1-5, to=1-7]
	\arrow["{\rho_{F(X)}^\DD}"', from=1-5, to=2-5]
	\arrow["{\varphi_{X,1}}", from=1-7, to=2-7]
	\arrow["{F(\rho^\CC_X)}", from=2-7, to=2-5]
\end{tikzcd}\]
        \item (symmetry) For all $X,Y\in \CC$, the following diagram commutes.
\[\begin{tikzcd}
	{F(X)\boxtimes F(Y)} && {F(Y)\boxtimes F(X)} \\
	{F(X\otimes Y)} && {F(Y\otimes X)}
	\arrow["{\sigma_{FX,FY}^\DD}", from=1-1, to=1-3]
	\arrow["{\varphi_{X,Y}}"', from=1-1, to=2-1]
	\arrow["{\varphi_{Y,X}}", from=1-3, to=2-3]
	\arrow["{F(\sigma_{X,Y}^\CC)}"', from=2-1, to=2-3]
\end{tikzcd}\]
    \end{enumerate}
    We write such a functor as a pair $(F,\varphi)$. When the comparison maps are isomorphisms, $(F,\varphi)$ is called a \define{strong} symmetric monoidal functor.
\end{definition}

\begin{definition}\label{def:mon-nat-t}
    Given a pair of lax symmetric monoidal functors $(F,\varphi^F),(G,\varphi^G)\maps (\CC,\otimes, 1)\to (\DD,\boxtimes, e)$, a \define{monoidal natural transformation} from $(F,\varphi^F)$ to $(G,\varphi^G)$ is a natural transformation $\theta\maps F\Rightarrow G$ which also makes the following diagrams commute for all $X,Y\in \CC$.
\[\begin{tikzcd}
	{F(X)\boxtimes F(Y)} && {G(X)\boxtimes G(Y)} && e && {F(1)} \\
	{F(X\otimes Y)} && {G(X\otimes Y)} &&&& {G(1)}
	\arrow["{\varphi^F_{X,Y}}"', from=1-1, to=2-1]
	\arrow["{\varphi^G_{X,Y}}", from=1-3, to=2-3]
	\arrow["{\theta_X\boxtimes\theta_Y}", from=1-1, to=1-3]
	\arrow["{\theta_{X\otimes Y}}"', from=2-1, to=2-3]
	\arrow["{\varphi^F_0}", from=1-5, to=1-7]
	\arrow["{\theta_1}", from=1-7, to=2-7]
	\arrow["{\varphi^G_0}"', from=1-5, to=2-7]
\end{tikzcd}\]
\end{definition}

\end{document}